\documentclass[a4paper]{article}

\DeclareFontShape{OT1}{cmr}{m}{scit}{<->ssub*cmr/m/sc}{}
\usepackage{slantsc}

\usepackage[margin=1in]{geometry}
\usepackage{amsmath}
\usepackage{amsthm}
\usepackage{aliascnt}
\usepackage{amssymb}
\usepackage{graphicx}
\usepackage{booktabs}
\usepackage{multirow}
\usepackage{xcolor}
\usepackage[utf8]{inputenc}
\usepackage{adjustbox}
\usepackage[mode=tex]{standalone}
\usepackage{microtype}
\usepackage{enumitem}
\usepackage{verbatim}
\usepackage{array}
\usepackage{tabularx}
\usepackage{arydshln}
\usepackage{tikz}
\usetikzlibrary{shapes,calc,math,backgrounds,matrix}

\usepackage[%
backend=biber,
bibstyle=numeric,
citestyle=numeric-comp,
maxcitenames=3,
maxbibnames=20,
sorting=ydnt
]{biblatex}
\usepackage{hyperref}
\hypersetup{
	colorlinks,
	allcolors=blue,
	unicode,
	bookmarksopen,
	bookmarksdepth=2
}

\usepackage[capitalise,noabbrev]{cleveref}
\crefname{cond}{Condition}{Conditions}
\creflabelformat{cond}{(#2C#1#3)}
\crefname{eitem}{}{}
\creflabelformat{eitem}{(#2#1#3)}
\crefname{step}{\textbf{Step}}{\textbf{Steps}}
\creflabelformat{step}{{\bf #2#1#3}}

\theoremstyle{plain}
\newtheorem{theorem}{Theorem}[section]
\newaliascnt{corollary}{theorem}
\newtheorem{corollary}[corollary]{Corollary}
\aliascntresetthe{corollary}
\newaliascnt{lemma}{theorem}
\newtheorem{lemma}[lemma]{Lemma}
\aliascntresetthe{lemma}

\newaliascnt{axiom}{theorem}

\aliascntresetthe{axiom}
\newaliascnt{conjecture}{theorem}

\aliascntresetthe{conjecture}
\newaliascnt{fact}{theorem}

\aliascntresetthe{fact}
\newaliascnt{hypothesis}{theorem}

\aliascntresetthe{hypothesis}
\newaliascnt{assumption}{theorem}

\aliascntresetthe{assumption}
\newaliascnt{proposition}{theorem}
\newtheorem{proposition}[proposition]{Proposition}
\aliascntresetthe{proposition}
\newaliascnt{criterion}{theorem}

\aliascntresetthe{criterion}
\theoremstyle{definition}
\newaliascnt{definition}{theorem}
\newtheorem{definition}[definition]{Definition}
\aliascntresetthe{definition}
\newaliascnt{example}{theorem}

\aliascntresetthe{example}
\newaliascnt{remark}{theorem}
\newtheorem{remark}[remark]{Remark}
\aliascntresetthe{remark}
\newaliascnt{principle}{theorem}

\aliascntresetthe{principle}

\crefname{theorem}{Theorem}{Theorems}
\Crefname{theorem}{Theorem}{Theorems}
\crefname{corollary}{Corollary}{Corollaries}
\Crefname{corollary}{Corollary}{Corollaries}
\crefname{lemma}{Lemma}{Lemmas}
\Crefname{lemma}{Lemma}{Lemmas}
\crefname{claim}{Claim}{Claims}
\Crefname{claim}{Claim}{Claims}
\crefname{axiom}{Axiom}{Axioms}
\Crefname{axiom}{Axiom}{Axioms}
\crefname{conjecture}{Conjecture}{Conjectures}
\Crefname{conjecture}{Conjecture}{Conjectures}
\crefname{fact}{Fact}{Facts}
\Crefname{fact}{Fact}{Facts}
\crefname{hypothesis}{Hypothesis}{Hypotheses}
\Crefname{hypothesis}{Hypothesis}{Hypotheses}
\crefname{assumption}{Assumption}{Assumptions}
\Crefname{assumption}{Assumption}{Assumptions}
\crefname{proposition}{Proposition}{Propositions}
\Crefname{proposition}{Proposition}{Propositions}
\crefname{criterion}{Criterion}{Criteria}
\Crefname{criterion}{Criterion}{Criteria}
\crefname{definition}{Definition}{Definitions}
\Crefname{definition}{Definition}{Definitions}
\crefname{example}{Example}{Examples}
\Crefname{example}{Example}{Examples}
\crefname{remark}{Remark}{Remarks}
\Crefname{remark}{Remark}{Remarks}
\crefname{principle}{Principle}{Principles}
\Crefname{principle}{Principle}{Principles}

\newcommand{\email}[1]{\href{mailto:#1}{\texttt{#1}}}

\newcommand{\TSsym}{\mathsf{TS}}

\newcommand{\TS}[2]{\TSsym_{#1}(#2)}

\newcommand{\defclassprob}[3]{%
	\par\vspace{1mm}%
	\noindent\fbox{%
		\begin{minipage}{\dimexpr\textwidth-2\fboxsep-2\fboxrule\relax}
			#1 \\
			\textbf{Fixed data:} #2 \\
			\textbf{Objective:} #3
		\end{minipage}%
	}%
	\par
}

\numberwithin{equation}{section}

\newcommand{\GTSRealizability}{\textsc{$\mathcal{G}$-$\TSsym_k$-Realizability}}
\newcolumntype{L}[1]{>{\raggedright\arraybackslash}p{#1}}
\newcolumntype{Y}{>{\raggedright\arraybackslash}X}
\newcommand{\TableGroupRule}{\specialrule{1.1pt}{0.45ex}{0.45ex}}

\title{\textbf{Token-sliding realizability for complements, Cartesian-products, and grid graph families}}
\author{Duc A.\ Hoang\\
VNU University of Science, Vietnam National University\\
Hanoi, Vietnam\\
\email{hoanganhduc@hus.edu.vn}}
\date{}

\begin{document}
\maketitle

\begin{abstract}
For an integer $k\ge 0$ and a graph $G$, the \emph{token-sliding reconfiguration graph $\mathsf{TS}_k(G)$} has the independent $k$-sets of $G$ as vertices.
Two vertices are adjacent if one token can slide along an edge of $G$ and the resulting $k$-set is still independent.
We study the following realizability problem: for fixed $k\ge 2$, which graphs are isomorphic to $\mathsf{TS}_k(G)$ for some graph $G$?
This inverse viewpoint asks which abstract state spaces can occur exactly under a local token rule.
We give positive realizability results for the complement targets $\overline{K_n}$, $\overline{K_{m,n}}$, and $\overline{K_n-e}$, and we determine sharp cutoffs for complements of paths and cycles.
We also prove a product formula for token-sliding graphs of disjoint unions and apply it to Cartesian products of complete graphs, paths, and cycles.
For every grid $\Gamma_{m,n}=P_m\square P_n$ with $2\le m\le n$, we realize $\Gamma_{m,n}$ at token value $m+n-2$ and at every token value $k\ge 4$.
At small token values, we prove that $C_4\square C_n$ is not a $\mathsf{TS}_2$-graph for $n\ge 4$, classify ladders $\Gamma_{2,n}$, and settle the first non-ladder grid: for $k\ge 2$, $\Gamma_{3,3}$ is realizable if and only if $k\ge 4$.

\noindent\textbf{Keywords:} token sliding, reconfiguration graph, realizability, Cartesian products, grid graphs, complement roots.
\end{abstract}

\section{Introduction}

Recently, \emph{reconfiguration problems} have attracted attention from both theoretical and practical viewpoints.
Given a computational problem $\mathcal{P}$ and a \emph{reconfiguration rule}, the corresponding \emph{reconfiguration graph} has the feasible solutions of $\mathcal{P}$ as vertices.
Two vertices are \emph{adjacent} when one feasible solution is obtained from the other by one application of the rule.
For an overview of this research area, we refer readers to the surveys~\cite{Nishimura2018,vandenHeuvel2013,MynhardtN2019,BousquetMNS2024}.

The \textsc{Token-Sliding} model for the \textsc{Independent Set} problem was introduced in the Nondeterministic Constraint Logic framework of Hearn and Demaine~\cite{HearnDemaine2005}.
A fixed-size independent set of a graph $G$ is viewed as a set of tokens placed on the vertices of $G$.
The \textsc{Token-Sliding} rule allows one token to slide along an edge of $G$ to an unoccupied vertex, provided that the resulting set of tokens is again independent.
We denote the corresponding reconfiguration graph by $\TSsym_k(G)$.

Rather than starting with a graph $G$ and asking how its independent $k$-sets reconfigure, we start with a target graph $H$ and ask whether $H$ can occur exactly as some $\TSsym_k(G)$.
A graph $H$ is a \emph{$\TSsym_k$-graph} if $H\cong \TSsym_k(G)$ for some graph~$G$.
For a graph family $\mathcal{G}$, the corresponding \emph{family-realizability problem} asks which members of $\mathcal{G}$ are $\TSsym_k$-graphs.
We call this the \emph{\GTSRealizability{} problem}.

\defclassprob{\GTSRealizability}{An integer $k\ge 2$ and a graph family $\mathcal{G}$.}{Classify the graphs $H\in\mathcal{G}$ for which there exists a graph $X$ such that $\TSsym_k(X)\cong H$.}

In this paper, we study \GTSRealizability{} through complement targets, a disjoint-union product formula, and grids.
Complement targets expose local obstructions and sharp cutoff behavior, while the product formula gives Cartesian-product targets.
Grid graphs are the main product test family: they combine Cartesian-product structure, many induced $4$-cycles, and strong local constraints, exposing small-token obstructions.
We use the grid notation $\Gamma_{m,n}=P_m\square P_n$ defined in \cref{sec:prelim}; $\Gamma_{2,n}$ is a \emph{ladder}.
Whenever we discuss grids, we adopt the standing assumption $2\le m\le n$.
The path case $\Gamma_{1,n}\cong P_n$ is excluded by this convention; it is already settled in~\cite[Cor.~5]{AvisHoang2023}.

Avis and Hoang~\cite{AvisHoang2023,AvisHoang2024} established initial target-class realizability results, construction and closure tools, decomposition methods, and acyclic tree/forest constructions.
See \Cref{app:known-realizability} for a summary of these results.
Some elementary complement-positive cases follow from those results together with complement identities, but the sharp complement cutoffs and the Cartesian-product and grid families treated here are not settled there.

The realizability viewpoint is adjacent to several other state-graph models.
Token graphs allow all $k$-subsets of a vertex set as states, while matching graphs restrict the host to line graphs and the states to maximum matchings.
Clique reconfiguration gives a parallel target-realizability problem for clique states rather than independent-set states.
These neighboring models place the problem in context.
Here we ask which prescribed graphs arise exactly as token-sliding graphs of independent sets, with no restriction beyond finite simple host graphs.

The results form a construction--obstruction program rather than a list of unrelated family classifications.
We now summarize the main results; see \cref{tab:realizability-summary} for the results proved here and the partial product frontiers settled in this paper.

\begin{itemize}
\item \emph{Complements.}
We settle several standard complement families: $\overline{K_n}$, $\overline{K_{m,n}}$, $\overline{K_n-e}$, $\overline{P_n}$, and $\overline{C_n}$.
The first three families are positive for every fixed $k\ge 2$ (\cref{thm:edgeless,thm:complement-complete-bipartite,thm:complement-complete-minus-edge}).
For $\overline{K_n}$ we give a direct construction; the other two families are short consequences of prior complete-graph realizability and disjoint-union closure.
For complements of paths and cycles, the cutoffs are sharp: $\overline{P_n}$ is realizable exactly for $n\le 5$, and $\overline{C_n}$ exactly for $n\le 6$ (\cref{thm:complement-paths,thm:complement-cycles}).

\item \emph{Products.}
We prove a disjoint-union product formula for token-sliding graphs, yielding Cartesian-product families (\cref{thm:product-lemma,cor:product-saturation,cor:product-families}).
At two tokens, this positive behavior has limits: $C_4\square C_n$ is not a $\TSsym_2$-graph for $n\ge 4$ (\cref{thm:c4-cycle-products-not-ts2}).

\item \emph{Grids and ladders.}
For every $2\le m\le n$, the grid $\Gamma_{m,n}$ is a $\TSsym_{m+n-2}$-graph and, by a separate four-token construction with padding, a $\TSsym_k$-graph for every $k\ge 4$ (\cref{thm:grid-realizability,cor:grid-allk,thm:grid-four-token-product,thm:grid-allk-four}).
We also completely classify ladders by fixed token number: for $n\ge 2$ and $k\ge 2$, the ladder $\Gamma_{2,n}$ is a $\TSsym_k$-graph if and only if $k\ge 3$ or $(n,k)=(2,2)$ (\cref{thm:ladder-bounds}).
The first non-ladder width-three grid has the opposite small-token behavior: for $k\ge 2$, $\Gamma_{3,3}$ is a $\TSsym_k$-graph if and only if $k\ge 4$ (\cref{thm:gamma33-fixed-k}).
\end{itemize}

\begin{table}[ht]
\centering
\renewcommand{\arraystretch}{1.18}
\begin{tabularx}{\textwidth}{@{}L{0.16\textwidth}L{0.24\textwidth}YL{0.20\textwidth}@{}}
\toprule
Group & Target graph $H$ & $\TSsym_k$-realizable? & Reference \\
\TableGroupRule
\multirow[c]{5}{0.16\textwidth}{Complements}
& $\overline{K_n}$, $n\ge 1$ & yes for all $k\ge 2$ & Thm.~\labelcref{thm:edgeless} \\
\cmidrule(lr){2-4}
& $\overline{K_{m,n}}$, $m,n\ge 1$ & yes for all $k\ge 2$ & Thm.~\labelcref{thm:complement-complete-bipartite} \\
\cmidrule(lr){2-4}
& $\overline{K_n-e}$, $n\ge 2$ & yes for all $k\ge 2$ & Thm.~\labelcref{thm:complement-complete-minus-edge} \\
\cmidrule(lr){2-4}
& $\overline{P_n}$, $n\ge 1$ & yes iff $n\le 5$ & Thm.~\labelcref{thm:complement-paths} \\
\cmidrule(lr){2-4}
& $\overline{C_n}$, $n\ge 3$ & yes iff $n\le 6$ & Thm.~\labelcref{thm:complement-cycles} \\
\TableGroupRule
\multirow[c]{10}{0.16\textwidth}{Products}
& $K_n\square K_m$, $m,n\ge 1$ & yes for all $k\ge 2$ & Cor.~\labelcref{cor:product-families} \\
\cmidrule(lr){2-4}
& $P_n\square K_m$, $m,n\ge 1$ & yes for all $k\ge 3$ & Cor.~\labelcref{cor:product-families} \\
\cmidrule(lr){2-4}
& $C_n\square K_m$, $n\ge 3$, $m\ge 1$ & yes for all $k\ge 3$ & Cor.~\labelcref{cor:product-families} \\
\cmidrule(lr){2-4}
& $C_m\square C_n$, $m,n\ge 3$ & yes for all $k\ge 4$ & Cor.~\labelcref{cor:product-families} \\
\cmidrule(lr){2-4}
& $C_4\square C_n$, $n\ge 4$ & no for $k=2$ & Thm.~\labelcref{thm:c4-cycle-products-not-ts2} \\
\cmidrule(lr){2-4}
& \multirow[c]{2}{0.24\textwidth}{$C_m\square P_n$, $m\ge 3$, $n\ge 1$} & yes for all $k\ge 2$ when $n=1$; yes for $(m,n,k)=(3,2,2)$; yes for $k=3$ with $m=3$ or $n\le 2$; and yes for all $k\ge 4$ & Cor.~\labelcref{cor:cycle-path-positive} \\
\cmidrule(lr){3-4}
& & no for $k=2$ when $m\ge 4$ and $n=2$, or when $m=3$ and $3\le n\le 5$ & Prop.~\labelcref{prop:cycle-path-k2-structural} \\
\cmidrule(lr){2-4}
& $\Gamma_{m,n} = P_m \square P_n$, $2\le m\le n$ & yes for $k=m+n-2$ and all $k\ge 4$ & Thms.~\labelcref{thm:grid-realizability,thm:grid-allk-four} \\
\cmidrule(lr){2-4}
& $\Gamma_{2,n}$, $n\ge 2$ & yes iff $k\ge 3$ or $(n,k)=(2,2)$ & Thm.~\labelcref{thm:ladder-bounds} \\
\cmidrule(lr){2-4}
& $\Gamma_{3,3}$ & yes iff $k\ge 4$ & Thm.~\labelcref{thm:gamma33-fixed-k} \\
\bottomrule
\end{tabularx}
\caption{Summary of $\TSsym_k$-realizability results proved in this paper, with $k\ge 2$. Some rows are complete classifications, while product rows record the positive ranges and small-token exclusions settled here. Broader product rows and sharper small-token rows may overlap.}
\label{tab:realizability-summary}
\end{table}

For $C_m\square P_n$, \cref{tab:realizability-summary} records only the cases settled here.

The proofs combine explicit host constructions, complement-root obstructions, a disjoint-union product formula, padding, and small-token obstructions.
Together, these tools show that token-sliding target spaces have strong product closure at larger token values but sharp local obstructions at small token values.

The realizability viewpoint is related to, but different from, the theory of \emph{token graphs} $F_k(G)$, where all $k$-subsets of $V(G)$ are allowed as configurations~\cite{FabilaMonroyFHHUW2012}.
Directed variants of token graphs have also been studied recently~\cite{FernandesLPSZ2026}.
Related reconfiguration graphs have been studied for \emph{proper colorings} and \emph{dominating sets}~\cite{MynhardtN2019,MessingerPorter2025,MessingerPipes2026}.

\emph{Matching graphs}~\cite[pp.~73--74]{ErohSchultz1998} give a close direct comparison.
The matching graph of a graph $R$ has the maximum matchings of $R$ as vertices, with adjacency when two matchings differ by one edge.
Since a maximum matching of $R$ is a maximum independent set of the line graph $L(R)$, and a one-edge exchange between maximum matchings must exchange incident edges, the matching graph of $R$ is canonically isomorphic to $\TS{\alpha(L(R))}{L(R)}$.
Thus, for fixed $k$, matching-graph realizability with maximum matchings of size~$k$ is a restricted case of \GTSRealizability{}.
The hosts are line graphs, and the token sets are maximum independent sets.
The foundational matching-graph literature studies constructions and characterizations for standard target families, including paths, stars, cycles, complete graphs, hypercubes, and trees~\cite{JonesRoehmSchultz1998,ErohSchultz1998,Liu2005MaxMatchingTypes}.
Further target-family results and restrictions for maximum matching graphs appear in~\cite{WangYuanLiuLin1998,Liu2004Subgraphs}.
Second-kind maximum matching graphs have also been studied~\cite{LiuLiu2014SecondKind}, but their adjacency rule is different and is not the independent-set token-sliding graph of a line-graph host.
Some maximum-matching-graph results are listed in \cref{app:known-realizability}.
They motivate comparison with our path, cycle, acyclic, and product constructions, but matching graphs remain formally more restrictive.

Clique reconfiguration gives another parallel target-realizability problem.
In that setting, states are $k$-cliques; the token-sliding and token-jumping rules replace one clique vertex by an adjacent non-member or by an arbitrary non-member, respectively.
Lam, Phan, and Hoang~\cite{LamPhanHoang2026} studied structural properties of these clique reconfiguration graphs, and Hoang~\cite{Hoang2026} studied the corresponding target-realizability problem.
The analogy is structural: the feasible states are cliques rather than independent sets.

The rest of this paper is organized as follows.
\Cref{sec:prelim} gives definitions and preliminary lemmas.
\Cref{sec:basic-complements} treats complements of standard graph families, after recalling the complete, path, and cycle realizations used in the proofs.
\Cref{sec:product-construction} proves the disjoint-union product formula and applies it to products of complete graphs, paths, and cycles.
\Cref{sec:grid-realizability} applies these tools to grids and proves the fixed-token classifications of ladders and $\Gamma_{3,3}$.
\Cref{sec:conclusion} gives concluding remarks.
\section{Preliminaries}
\label{sec:prelim}

\paragraph{Graph invariants and notation.}
Throughout the paper, all graphs are finite, simple, and undirected.
All realizability statements have token-set size $k\ge 2$ unless explicitly stated otherwise.
We use $V(G)$ and $E(G)$ for the vertex and edge sets of a graph~$G$.
The \emph{neighborhood} of a vertex $v$ in $G$ is $N_G(v):=\{w\in V(G):vw\in E(G)\}$, and its \emph{closed neighborhood} is $N_G[v]:=N_G(v)\cup\{v\}$; for $R\subseteq V(G)$, let $\displaystyle N_G[R]:=\bigcup_{r\in R}N_G[r]$.
Let $\deg_G(v):=|N_G(v)|$ denote the \emph{degree} of~$v$, and let $\Delta(G)$, $\alpha(G)$, and $\omega(G)$ denote the \emph{maximum degree}, \emph{independence number}, and \emph{clique number}, respectively.
We use the standard identity $\alpha(\overline{G})=\omega(G)$.
A vertex of degree~$1$ is a \emph{pendant vertex} or \emph{leaf}, and its unique incident edge is a \emph{pendant edge}.
A vertex is \emph{universal} if it is adjacent to every other vertex of the graph.
When the graph~$G$ is clear from context, we drop the subscript.
For a graph $G$, let $t(G)$ denote its \emph{triangle count}, the number of triangles of~$G$; for an edge $xy\in E(G)$, let $t_G(xy):=|N_G(x)\cap N_G(y)|$ denote the number of triangles containing~$xy$.
If $G$ is connected, let $r(G):=|E(G)|-|V(G)|+1$ denote its \emph{cycle rank}.
Equivalently, if $T$ is a \emph{spanning tree} of~$G$, then $r(G)=|E(G)\setminus E(T)|$; thus $r(G)$ counts how many edges must be deleted from~$G$ to obtain a tree.
An edge of a graph is a \emph{triangle-edge} if it lies in at least one triangle, and the \emph{triangle-edge set} of $G$ is the set of all triangle-edges of~$G$.

\paragraph{Graph classes and operations.}
For $W\subseteq V(G)$, let $G[W]$ denote the subgraph of~$G$ induced by~$W$.
For a vertex $v\in V(G)$, $G-v$ denotes the graph obtained from~$G$ by deleting~$v$ and all incident edges; for $W\subseteq V(G)$, $G-W$ is defined analogously.
For an edge set $X\subseteq E(G)$, let $G-X$ denote the graph obtained from~$G$ by deleting the edges of~$X$.
For graphs $G_1,G_2$ with disjoint vertex sets, $G_1\sqcup G_2$ denotes their \emph{disjoint union}.
For a finite set $V$, let $\displaystyle \binom{V}{2}$ denote the set of all two-element subsets of~$V$.
For a graph $G=(V,E)$, its \emph{complement} is $\displaystyle \overline{G}=(V,\binom{V}{2}\setminus E)$.
For a graph~$H$, its \emph{line graph} $L(H)$ has vertex set $E(H)$, with two vertices adjacent if the corresponding edges of~$H$ share an endpoint.
The \emph{Cartesian product} $G\square H$ has vertex set $V(G)\times V(H)$; two vertices $(g,h)$ and $(g',h')$ are adjacent if either $g=g'$ and $hh'\in E(H)$, or $h=h'$ and $gg'\in E(G)$.
The \emph{path}, \emph{cycle}, \emph{complete graph}, and \emph{complete bipartite graph} on the indicated number of vertices are denoted $P_n$, $C_n$, $K_n$, and $K_{m,n}$, respectively.
For integers $m,n\ge 1$, let $\Gamma_{m,n}:=P_m\square P_n$ denote the \emph{$(m\times n)$-grid graph}, and call $\Gamma_{2,n}$ a \emph{ladder}.
Whenever we discuss grid targets, we use the standing convention $2\le m\le n$ unless stated otherwise.
A \emph{triangle} is a $K_3$.
If $e$ is an edge of $K_n$, then $K_n-e$ denotes the graph obtained from $K_n$ by deleting~$e$.
For $r\ge 0$, the notation $rK_1$ means the disjoint union of $r$ isolated vertices; when $r=0$ this graph is empty.
A \emph{claw} is the graph $K_{1,3}$; a graph is \emph{claw-free} if it contains no induced claw.
We use two elementary facts.
First, line graphs are claw-free: if one edge is incident with three other edges, then two of those three share an endpoint and hence are adjacent in the line graph.
Second, under the standing assumption $2\le m\le n$, the grid $\Gamma_{m,n}$ contains an induced claw whenever $(m,n)\ne(2,2)$: take any grid vertex of degree at least~$3$ and any three of its grid neighbors, which are pairwise non-adjacent because grids are bipartite.

\paragraph{Token-sliding and realizability notation.}
For a graph $G=(V,E)$ and an integer $s\ge 0$, the \emph{token-sliding reconfiguration graph} $\TSsym_s(G)$ has vertex set consisting of all independent sets $I\subseteq V(G)$ with $|I|=s$.
Two vertices $I,J$ of $\TSsym_s(G)$ are adjacent if there exist $u\in I\setminus J$ and $v\in J\setminus I$ such that $I\setminus J=\{u\}$, $J\setminus I=\{v\}$, and $uv\in E(G)$.
If $G$ has no independent set of size~$s$, then $\TSsym_s(G)$ is the empty graph on zero vertices.
By convention, $\TSsym_0(G)$ is the one-vertex graph whose unique vertex is the empty independent set.
Thus decomposition formulas may include a zero-token factor.
\Cref{fig:ts2-c5} illustrates $\TSsym_2(C_5)$.
A graph $H$ is a \emph{$\TSsym_k$-graph} if $H\cong \TSsym_k(G)$ for some graph $G$.
For a token move from an independent set $I$ to an independent set $J$ in $\TSsym_k(G)$, we call $xy$ the \emph{slide edge} if the removed token $x\in I\setminus J$ slides along the base-graph edge $xy$ to the added token $y\in J\setminus I$.
\begin{figure}[ht]
\centering
\begin{adjustbox}{max width=\textwidth}
\begin{tikzpicture}[
	vertex/.style={circle,draw,thick,fill=white,inner sep=1pt,minimum size=18pt,font=\small},
	state/.style={circle,draw,thick,fill=white,inner sep=1pt,minimum size=30pt,font=\scriptsize},
	edge/.style={thick},
	title/.style={font=\small\bfseries}
]
\node[title] at (0,2.15) {$C_5$};
\node[vertex] (v1) at (90:1.25) {$v_1$};
\node[vertex] (v2) at (18:1.25) {$v_2$};
\node[vertex] (v3) at (-54:1.25) {$v_3$};
\node[vertex] (v4) at (-126:1.25) {$v_4$};
\node[vertex] (v5) at (162:1.25) {$v_5$};
\draw[edge] (v1) -- (v2) -- (v3) -- (v4) -- (v5) -- (v1);

\node[title] at (5.8,2.25) {$\TSsym_2(C_5)$};
\node[state] (s13) at ($(5.8,0)+(90:1.50)$) {$v_1v_3$};
\node[state] (s14) at ($(5.8,0)+(18:1.50)$) {$v_1v_4$};
\node[state] (s24) at ($(5.8,0)+(-54:1.50)$) {$v_2v_4$};
\node[state] (s25) at ($(5.8,0)+(-126:1.50)$) {$v_2v_5$};
\node[state] (s35) at ($(5.8,0)+(162:1.50)$) {$v_3v_5$};
\draw[edge] (s13) -- (s14) -- (s24) -- (s25) -- (s35) -- (s13);
\end{tikzpicture}
\end{adjustbox}
\caption{The token-sliding graph $\TSsym_2(C_5)$. The label $v_iv_j$ denotes $\{v_i,v_j\}$; edges are single-token slides in $C_5$.}
\label{fig:ts2-c5}
\end{figure}

\paragraph{Tools from Avis--Hoang.}
We use the following results of Avis and Hoang~\cite{AvisHoang2023}.

\begin{lemma}[{\cite[Lem.~1]{AvisHoang2023}}]
\label{lem:line-avis}
Let $H$ be a graph and let $G:=\overline{H}$.
Then, under the natural identification of independent $2$-sets of $G$ with edges of $H$, the graph $\TSsym_2(G)$ is a spanning subgraph of the line graph $L(H)$.
If $H$ is triangle-free, then $\TSsym_2(G)\cong L(H)$.
\end{lemma}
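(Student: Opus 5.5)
The plan is to write down the natural identification explicitly and then check the two assertions directly.

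An independent $2$-set $\{u,v\}$ of $G=\overline{H}$ is a pair of non-adjacent vertices in $\overline{H}$, which is exactly an edge $uv$ of $H$. So the map $\{u,v\}\mapsto uv$ is a bijection between $V(\TSsym_2(G))$ and $E(H)=V(L(H))$. Both vertex sets are identified this way, and it remains to compare the edge sets.

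\textbf{Spanning subgraph.} Let $I=\{x,z\}$ and $J=\{y,z\}$ be adjacent in $\TSsym_2(G)$, so $I\setminus J=\{x\}$, $J\setminus I=\{y\}$ and $xy\in E(G)$. The corresponding edges $xz$ and $yz$ of $H$ are distinct and share the endpoint $z$. Hence they are adjacent in $L(H)$, and every edge of $\TSsym_2(G)$ is an edge of $L(H)$ on the same vertex set.

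\textbf{Equality when $H$ is triangle-free.} Suppose $e=xz$ and $f=yz$ are adjacent in $L(H)$, with $x\ne y$. Then $x$ and $y$ are distinct vertices, and $\{x,y\}$ is either an edge of $H$ or of $G$. If $xy\in E(H)$, then $x,y,z$ span a triangle in $H$, which is excluded. Therefore $xy\in E(G)$, and sliding the token on $x$ to $y$ moves $\{x,z\}$ to $\{y,z\}$. Both sets are independent in $G$, so this is an edge of $\TSsym_2(G)$. Thus the two graphs have the same edges, and the identification is an isomorphism $\TSsym_2(G)\cong L(H)$.

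Every step is a direct unwinding of the definitions; the only point needing care is that the triangle-free hypothesis is exactly what rules out $xy\in E(H)$.
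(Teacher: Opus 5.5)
Your proof is correct. You identify independent $2$-sets of $\overline{H}$ with $E(H)$, note that a slide $\{x,z\}\to\{y,z\}$ yields two $H$-edges sharing $z$, and use triangle-freeness to force $xy\in E(G)$ for the converse; this is the standard definitional argument. The paper does not reprove the lemma (it is cited from Avis and Hoang), but it relies throughout on exactly the rule you derive: two $H$-edges $xz,yz$ are adjacent in $\TSsym_2(\overline{H})$ if and only if $xy\notin E(H)$.
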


The padding result lets us raise the token number by adding isolated vertices.

\begin{proposition}[{\cite[Prop.~3]{AvisHoang2023}}]
\label{prop:padding}
Let $H$ be a graph with $\alpha(H)\ge 1$, and let $k\ge\alpha(H)$.
Let $H'$ be obtained from $H$ by adding $k-\alpha(H)$ isolated vertices.
Then $\TSsym_k(H')\cong \TSsym_{\alpha(H)}(H)$.
\end{proposition}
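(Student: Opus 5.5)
The statement is the padding proposition: if $H$ has $\alpha(H)\ge 1$, $k\ge\alpha(H)$, and $H'$ is $H$ plus $k-\alpha(H)$ isolated vertices, then $\TSsym_k(H')\cong\TSsym_{\alpha(H)}(H)$. Write $a:=\alpha(H)$ and let $Z$ be the set of $k-a$ new isolated vertices. The plan is to give an explicit bijection between vertex sets and then check that adjacency is preserved in both directions.

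\textbf{The bijection.} Define $\varphi(I):=I\cup Z$ for each independent $a$-set $I$ of $H$. This is an independent set of $H'$, since vertices of $Z$ have no neighbours, and it has size $a+(k-a)=k$.

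For surjectivity, take any independent $k$-set $J$ of $H'$.
\begin{itemize}
\item $J\cap V(H)$ is independent in $H$, so $|J\cap V(H)|\le a$.
\item $|J\cap Z|\le k-a$.
\item Since $|J|=k$, both inequalities must be equalities.
\end{itemize}
Hence $Z\subseteq J$ and $J\cap V(H)$ is an independent $a$-set of $H$, so $J=\varphi(J\cap V(H))$. Injectivity is clear because $\varphi(I)\cap V(H)=I$. So $\varphi$ is a bijection onto $V(\TSsym_k(H'))$.

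\textbf{Adjacency is preserved.} Note that $\varphi(I)\setminus\varphi(I')=I\setminus I'$ and $\varphi(I')\setminus\varphi(I)=I'\setminus I$. Also, $E(H')=E(H)$.
\begin{itemize}
\item Suppose $I\setminus I'=\{u\}$, $I'\setminus I=\{v\}$ and $uv\in E(H)$. Then the same $u,v$ witness the adjacency of $\varphi(I)$ and $\varphi(I')$ in $\TSsym_k(H')$.
\item Conversely, an adjacency between $\varphi(I)$ and $\varphi(I')$ comes from a slide edge $uv\in E(H')=E(H)$. Here $u\in I\setminus I'$ and $v\in I'\setminus I$, so the same pair witnesses adjacency of $I$ and $I'$ in $\TSsym_a(H)$.
\end{itemize}
In fact no token can ever occupy or leave $Z$, since those vertices have no incident edges.

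\textbf{Main obstacle.} The only step with real content is the counting argument forcing $Z\subseteq J$. This is exactly where $a=\alpha(H)$ is needed, rather than an arbitrary token number. The hypothesis $\alpha(H)\ge 1$ only guarantees that $\TSsym_a(H)$ is not the zero-token convention case; the argument itself works as written.
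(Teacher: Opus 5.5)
Your proof is correct and complete. The counting step $|J\cap V(H)|\le\alpha(H)$ and $|J\cap Z|\le k-\alpha(H)$ forces $Z\subseteq J$ for every independent $k$-set $J$ of $H'$. Since the vertices of $Z$ carry no edges, the map $I\mapsto I\cup Z$ matches both the vertex sets and the slide relation exactly, in both directions.

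The paper does not prove this statement itself; it quotes it from Avis and Hoang. Your direct bijection is the standard argument. Within the paper's own framework, the same conclusion also follows from \cref{thm:product-lemma} with $G_1=H$ and $G_2=(k-\alpha(H))K_1$. There only the summand $i=k-\alpha(H)$ is non-empty, and its second factor $\TSsym_{k-\alpha(H)}((k-\alpha(H))K_1)$ is a single vertex. Your argument has the advantage of being self-contained and covering the trivial case $k=\alpha(H)$ with no special handling.
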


\paragraph{Reductions and size bounds.}

When a two-token realization has the form $\TSsym_2(\overline{F})$, we call $F$ a \emph{complement root} of the realization.
If universal vertices have first been deleted from the original host without changing the token-sliding graph, we call the resulting complement root a \emph{reduced complement root}.

Two elementary reductions will keep later two-token roots small.

\begin{lemma}
\label{lem:universal}
Let $G$ be a graph, let $u$ be a universal vertex of~$G$, and let $k\ge 2$.
Then $\TSsym_k(G)\cong \TSsym_k(G-u)$.
\end{lemma}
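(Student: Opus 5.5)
The plan is to show that the identity map on independent sets already gives the isomorphism. No independent $k$-set of $G$ with $k\ge 2$ can contain the universal vertex $u$, so deleting $u$ changes neither the vertex set nor the edge set of the reconfiguration graph.

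First, fix an independent set $I\subseteq V(G)$ with $|I|=k\ge 2$. If $u\in I$, then $I$ contains some other vertex $w\ne u$. Since $u$ is universal, $uw\in E(G)$, which contradicts independence. Hence every independent $k$-set of $G$ lies in $V(G)\setminus\{u\}$. Because $G-u$ is the induced subgraph $G[V(G)\setminus\{u\}]$, a set $I\subseteq V(G)\setminus\{u\}$ is independent in $G$ if and only if it is independent in $G-u$. So $\TSsym_k(G)$ and $\TSsym_k(G-u)$ have literally the same vertex set.

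Second, compare the edges. Take two such sets $I,J$ with $I\setminus J=\{x\}$ and $J\setminus I=\{y\}$. Both $x$ and $y$ are different from $u$, so $xy\in E(G)$ if and only if $xy\in E(G-u)$, again because $G-u$ is induced. Therefore the adjacency conditions in the two reconfiguration graphs coincide, and the identity map is an isomorphism, in fact an equality of graphs.

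There is no real obstacle here. The only point needing care is the hypothesis $k\ge 2$. For $k=1$ the singleton $\{u\}$ would be an extra vertex of $\TSsym_1(G)$, so the claim would fail, and this is exactly where the first step uses $|I|\ge 2$.
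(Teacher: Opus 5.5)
Your proof is correct and matches the paper's: a universal vertex cannot lie in any independent set of size at least two, so $\TSsym_k(G)$ and $\TSsym_k(G-u)$ have the same vertex set, and the allowed slides agree because $G-u$ is an induced subgraph. You simply spell out the induced-subgraph step, which the paper leaves implicit.
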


\begin{proof}
We show this by comparing the token states before and after deleting the universal vertex.
A universal vertex cannot belong to any independent set of size~$\ge 2$.
Hence, the vertex sets of $\TSsym_k(G)$ and $\TSsym_k(G-u)$ coincide, and so do the allowed token-slides.
\end{proof}

Non-empty connected two-token graphs can be realized with a connected complement root of controlled size.

\begin{lemma}
\label{lem:sizebound}
Let $G$ be a graph and set $N:=|V(\TSsym_2(G))|$.
If $\TSsym_2(G)$ is non-empty and connected, then there exists a graph~$G'$ with $\TSsym_2(G')\cong \TSsym_2(G)$, $|V(G')|\le N+1$, and $\overline{G'}$ connected with no isolated vertices and exactly $N$ edges.
\end{lemma}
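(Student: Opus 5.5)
We prove \cref{lem:sizebound} by reducing $G$ in two stages, discarding the parts of the host that play no role in two-token states.

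First, delete every universal vertex of $G$, one at a time; by \cref{lem:universal} this does not change $\TSsym_2(G)$. Set $F:=\overline{G}$. Independent $2$-sets of $G$ are exactly the edges of $F$, so $|E(F)|=N$. Vertices of $G$ that are universal are isolated in $F$, so after this step $F$ has no isolated vertices. A vertex isolated in $F$ lies in no independent $2$-set of $G$. Deleting such a vertex therefore removes no state, and it removes no slide either, since every slide moves a token between vertices that lie in states. This gives the "no isolated vertices'' condition.

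Second, we need $F$ connected, and for this we use that $\TSsym_2(G)$ is connected and non-empty. Suppose $F$ had two components $F_1,F_2$ that both contain edges. A token move from $\{a,b\}$ to $\{a,c\}$ requires $bc\in E(G)$ and $ab,ac\in E(F)$, so $a,b,c$ lie in one component of $F$. Hence a slide never moves a state (an edge of $F$) from one component of $F$ to another. The edges of $F_1$ and the edges of $F_2$ would then lie in different components of $\TSsym_2(G)$, a contradiction. So all edges of $F$ lie in one component, and since $F$ has no isolated vertices, $F$ is connected.

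Finally, let $G':=\overline{F}$, which is $G$ with the isolated-in-$F$ vertices removed. Then $\TSsym_2(G')\cong\TSsym_2(G)$, and $\overline{G'}=F$ is connected with no isolated vertices and exactly $N$ edges. A connected graph with $N$ edges has at most $N+1$ vertices, so $|V(G')|\le N+1$.

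The only step needing care is checking that deleting vertices isolated in $F$ preserves adjacency in the token-sliding graph. This holds because an adjacency of $\TSsym_2$ depends only on the two states and on one edge of $G$ between vertices that appear in those states. The connectivity argument is the main point of the proof.
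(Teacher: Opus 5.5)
Your proof is correct and follows essentially the paper's argument: delete universal vertices via \cref{lem:universal}, pass to $F=\overline{G'}$ with $|E(F)|=N$ and no isolated vertices, and use the fact that adjacent states share an endpoint in $F$ to force $F$ to be connected. You verify the shared-endpoint fact directly, whereas the paper cites the spanning-subgraph-of-$L(H)$ statement of \cref{lem:line-avis}. Your extra paragraph on deleting $F$-isolated vertices is redundant, since those are exactly the universal vertices you had already removed.
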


\begin{proof}
To prove the size bound, we first remove irrelevant universal vertices and then pass to the complement root.
Delete all universal vertices of~$G$ iteratively; by \cref{lem:universal} this does not change $\TSsym_2(G)$.
Let $G'$ be the resulting graph and put $H:=\overline{G'}$.
By construction, $H$ has no isolated vertices, and $|E(H)|=N$ because vertices of $\TSsym_2(G')$ are exactly the edges of~$H$.

By \cref{lem:line-avis}, $\TSsym_2(G')$ is a spanning subgraph of $L(H)$ with vertex set $E(H)$.
If $H$ were disconnected, then $L(H)$ would be disconnected because $H$ has no isolated vertices.
Deleting edges from $L(H)$ cannot reconnect components, so $\TSsym_2(G')$ would also be disconnected, contradicting the assumption.
Therefore, $H$ is connected.
A connected graph with $N$ edges has at most $N+1$ vertices, so $|V(G')|=|V(H)|\le N+1$.
\end{proof}

\paragraph{Complement-root local tools.}

Endpoint degrees in a complement root determine degrees in the two-token graph.

\begin{proposition}
\label{prop:per-edge-degree}
Let $F$ be a simple graph and let $ab\in E(F)$ be identified with a vertex of $\TSsym_2(\overline{F})$.
Then
\[
 \deg_{\TSsym_2(\overline{F})}(ab)\;=\;\deg_F(a)+\deg_F(b)-2-2t_F(ab).
\]
\end{proposition}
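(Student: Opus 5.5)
We prove the formula by counting neighbours directly in $\TSsym_2(\overline{F})$, using the identification of independent $2$-sets of $\overline{F}$ with edges of $F$ from \cref{lem:line-avis}. A neighbour of the state $\{a,b\}$ arises from a single slide. Either the token on $a$ slides along an edge $ac$ of $\overline{F}$ to a vertex $c\notin\{a,b\}$, or the token on $b$ slides along an edge $bc$ of $\overline{F}$. The resulting state is $\{c,b\}$ or $\{a,c\}$, and it must again be independent in $\overline{F}$, i.e.\ an edge of $F$. The plan is therefore to characterize, for each $c\notin\{a,b\}$, exactly when $\{c,b\}$ (respectively $\{a,c\}$) is a neighbour, and then count.

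\textbf{Step 1: characterize the neighbours.} The state $\{c,b\}$ is a neighbour obtained by moving the token from $a$ if and only if both of the following hold:
\begin{itemize}
\item $ac\in E(\overline{F})$, which for $c\ne a$ means $ac\notin E(F)$;
\item $cb\in E(F)$.
\end{itemize}
So these moves correspond to the vertices $c\in N_F(b)\setminus\{a\}$ with $c\notin N_F(a)$. Since $a\in N_F(b)$ and $N_F(a)\cap N_F(b)$ consists of the third vertices of triangles through $ab$, their number is
\[
\deg_F(b)-1-t_F(ab).
\]
Symmetrically, the moves of the token on $b$ number
\[
\deg_F(a)-1-t_F(ab).
\]

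\textbf{Step 2: check that the two families are disjoint.} Every neighbour in the first family contains $b$ but not $a$. Every neighbour in the second family contains $a$ but not $b$. Hence no state is counted twice, and distinct choices of $c$ within a family give distinct states.

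\textbf{Step 3: add.} Summing the two counts gives
\[
\deg_{\TSsym_2(\overline{F})}(ab)=\deg_F(a)+\deg_F(b)-2-2t_F(ab).
\]

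There is no real obstacle here. The only point needing care is the exclusion in Step 1: $c=a$ (or $c=b$) must be removed, and the common neighbours $c\in N_F(a)\cap N_F(b)$ are exactly the ones that fail the non-adjacency condition in $\overline{F}$. This accounts for the $-2$ and the $-2t_F(ab)$ terms.
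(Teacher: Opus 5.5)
Your proof is correct and follows essentially the same route as the paper: both enumerate the neighbours of $ab$ as the states $\{b,c\}$ with $c\in N_F(b)\setminus N_F[a]$ and $\{a,c\}$ with $c\in N_F(a)\setminus N_F[b]$, note that the two families are disjoint, and count each as $\deg_F(\cdot)-1-t_F(ab)$. The only cosmetic difference is that you index each family by the token that moves, while the paper indexes it by the endpoint that stays.
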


\begin{proof}
We show the formula by counting the possible slides from the two endpoints of~$ab$ separately.
By the definition of token sliding, a $\TSsym_2(\overline{F})$-neighbor of~$ab$ has the form $\{a,w\}$ with $w\in N_F(a)\setminus N_F[b]$ or the form $\{b,w\}$ with $w\in N_F(b)\setminus N_F[a]$.
These two families are disjoint: equality $\{a,w\}=\{b,w'\}$ with $a\ne b$ would force $w=b$ and $w'=a$, both excluded by the displayed set differences.
For the first family, $w$ ranges over $N_F(a)\setminus N_F[b]$.
Since $N_F[b]=\{b\}\cup N_F(b)$, we subtract one for $b\in N_F(a)$ and $t_F(ab)$ for the common neighbors $w\in N_F(a)\cap N_F(b)$, giving $|N_F(a)\setminus N_F[b]|=\deg_F(a)-1-t_F(ab)$.
The second family contributes $\deg_F(b)-1-t_F(ab)$ similarly.
Summing gives the claimed identity.
\end{proof}

Triangle-freeness has a simple complement-root form.

\begin{proposition}
\label{prop:claw-iff-triangle-free}
For every graph $F$, $\TSsym_2(\overline{F})$ is triangle-free if and only if $F$ is claw-free.
\end{proposition}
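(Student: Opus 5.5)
The plan is to translate triangles of $\TSsym_2(\overline{F})$ into configurations of edges of $F$. By \cref{lem:line-avis}, vertices of $\TSsym_2(\overline{F})$ are edges of $F$. Two edges $e,f$ are adjacent exactly when they share an endpoint and their other endpoints are non-adjacent in $F$; equivalently, $e=ab$ and $f=ac$ with $bc\notin E(F)$, so that the token on $b$ slides to $c$ along the $\overline{F}$-edge $bc$. Call such a pair a \emph{good} pair. A triangle in $\TSsym_2(\overline{F})$ is then three edges of $F$ that are pairwise good, in particular pairwise incident. Three pairwise incident edges of a simple graph either share a common vertex (a star $K_{1,3}$ as a subgraph) or form a triangle of $F$. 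I will prove both implications by analyzing these two shapes.

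First, suppose $F$ contains an induced claw with center $a$ and leaves $b,c,d$. Consider the edges $ab$, $ac$, $ad$ of $F$. Each pair, say $ab,ac$, shares $a$, and the other endpoints $b,c$ are non-adjacent in $F$ because the claw is induced. Hence each pair is adjacent in $\TSsym_2(\overline{F})$, and the three states form a triangle. This shows that if $\TSsym_2(\overline{F})$ is triangle-free, then $F$ is claw-free.

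Conversely, suppose $\TSsym_2(\overline{F})$ contains a triangle on the edges $e_1,e_2,e_3$ of $F$. These edges are pairwise incident, so by the dichotomy above there are two cases.

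\emph{They form a triangle $abc$ of $F$.} Then $e_1=ab$ and $e_2=bc$ share $b$, but their other endpoints $a,c$ are adjacent in $F$. So the pair is not good, which is a contradiction; this case cannot occur.

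\emph{They share a common vertex $a$.} Write them as $ab$, $ac$, $ad$ with $b,c,d$ distinct. Goodness of each pair gives $bc,bd,cd\notin E(F)$, so $\{a,b,c,d\}$ induces a claw in $F$.

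Thus if $F$ is claw-free, then $\TSsym_2(\overline{F})$ is triangle-free, which completes the equivalence.

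The only delicate step is the characterization of adjacency: one must check that no other kind of move exists. Two independent $2$-sets $\{a,b\}$ and $\{a,c\}$ of $\overline{F}$ are adjacent iff $bc\in E(\overline{F})$, i.e.\ $bc\notin E(F)$. This also matches the count in \cref{prop:per-edge-degree}, where $w$ ranges over $N_F(a)\setminus N_F[b]$. The remaining checks, such as the fact that three pairwise intersecting $2$-sets either share a point or lie in a $3$-set, are routine.
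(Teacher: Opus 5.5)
Your proposal is correct and follows the paper's proof essentially step for step. An induced claw with center $a$ gives three pairwise adjacent states $ab,ac,ad$; conversely, a triangle of states consists of three pairwise incident edges of $F$, where the triangle shape is excluded because the non-shared endpoints are adjacent and the star shape forces an induced claw.
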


\begin{proof}
We prove the two directions separately.
\begin{itemize}
\item[$(\Rightarrow)$]
Suppose $\TSsym_2(\overline{F})$ is triangle-free.
First, note that no vertex of $F$ has three pairwise non-adjacent neighbors.
Indeed, if $u_1,u_2,u_3$ were pairwise non-adjacent neighbors of some vertex $v$, then the three edges $vu_1,vu_2,vu_3$ would be pairwise adjacent in $\TSsym_2(\overline{F})$, forming a triangle.
Thus, $F$ has no induced claw.

\item[$(\Leftarrow)$]
Suppose, toward a contradiction, that $F$ is claw-free and $\TSsym_2(\overline{F})$ contains a triangle on vertices $\sigma_1,\sigma_2,\sigma_3\in E(F)$.
Each pair $\sigma_i,\sigma_j$ is adjacent in $\TSsym_2(\overline{F})$, so the pair shares an endpoint and the two non-shared endpoints are non-adjacent in~$F$.
Three pairwise incident edges in~$F$ form either a triangle of~$F$ or a star $K_{1,3}$ at a common endpoint.
The triangle case is impossible, because two edges of a triangle in~$F$ have adjacent non-shared endpoints and hence are not adjacent in $\TSsym_2(\overline{F})$.
In the star case, the three non-shared endpoints are pairwise non-adjacent by the token-sliding adjacency condition for the three edge pairs.
Therefore, the three edges form an induced claw in~$F$.
This contradicts claw-freeness, so $\TSsym_2(\overline{F})$ is triangle-free.
\end{itemize}
\end{proof}

\section{Complements of Some Basic Graphs}
\label{sec:basic-complements}

This section treats complements of several standard graph families.
We start with the edgeless graphs $\overline{K_n}$ and recall the complete, path, and cycle targets used later.
We then use disjoint-union closure for complements of complete bipartite graphs and complements of $K_n-e$.
Finally, we classify complements of paths and cycles.

\begin{theorem}
\label{thm:edgeless}
For every fixed $k\ge 2$ and every $n\ge 1$, the edgeless graph $\overline{K_n}$ is a $\TSsym_k$-graph.
\end{theorem}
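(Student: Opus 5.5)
We need a host $G$ with exactly $n$ independent $k$-sets and no slides between them. The plan is to give a direct construction.

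\emph{Construction.} Take $n$ disjoint cliques $Q_1,\dots,Q_n$, each isomorphic to $K_k$. Inside each clique we make every vertex adjacent to every vertex of each other clique, except for a prescribed perfect matching. The simplest choice is to let $G$ be the disjoint union of $n$ copies of $K_k\dots$. That choice fails immediately, however: independent $k$-sets would correspond to $k$-subsets of the $n$ copies, not to $n$ states.

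So we switch to the complement-root viewpoint generalized to $k$ tokens. Let $G:=\overline{F}$, where $F$ is the disjoint union of $n$ copies of $K_k$. Then an independent $k$-set of $G$ is exactly a $k$-clique of $F$. These are precisely the $n$ copies of $K_k$, because every clique of $F$ lies within one component. Hence $|V(\TSsym_k(G))|=n$, with states $V(Q_1),\dots,V(Q_n)$.

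\emph{No edges.} Two distinct states $V(Q_i)$ and $V(Q_j)$ are disjoint. Since $k\ge 2$, their symmetric difference has size $2k>2$, so they differ in more than one token and cannot be adjacent in $\TSsym_k(G)$. Therefore $\TSsym_k(G)\cong\overline{K_n}$.

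\emph{Main obstacle.} The only point needing care is verifying that no other independent $k$-sets exist. This reduces to the identity $\alpha(\overline{F})=\omega(F)$ applied to subsets: a set is independent in $\overline{F}$ exactly when it is a clique in $F$, and cliques of a disjoint union live in a single component. The edge case $n=1$ is covered by the same construction, with $G=\overline{K_k}$ and the single state $V(G)$.

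As an alternative, one could realize $\overline{K_n}$ at $k=2$ and then lift via \cref{prop:padding}. That route is harder, because padding with isolated vertices requires $\alpha$ to equal the base token number. I would use the direct construction above.
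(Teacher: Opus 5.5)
Your final construction is correct and is the same host as the paper's: the paper's graph, with $x_i^j$ adjacent to $x_{i'}^{j'}$ exactly when $i\ne i'$, is precisely $\overline{nK_k}$ (the complete $n$-partite graph with parts $S_i$ of size $k$). Your clique-in-the-complement check is just a rephrasing of the paper's argument that the independent $k$-sets are the parts and any two of them differ in all $k\ge 2$ tokens; the garbled, abandoned opening ``Construction'' paragraph should simply be deleted.
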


\begin{proof}
We build a graph $H$ such that $\TSsym_k(H)\cong\overline{K_n}$.
Let $V(H)=\{x_i^j:\ 1\le i\le n,\ 1\le j\le k\}$.
The graph $H$ is the union of $k$ cliques of size $n$: for each fixed $j$, make $\{x_1^j,\dots,x_n^j\}$ a clique.
Order the vertices of each clique by their first index, and connect two vertices in different cliques if and only if they are not in the same position in these two orderings.
Equivalently, for distinct $j,j'$, join $x_i^j$ to $x_{i'}^{j'}$ if and only if $i\ne i'$; see \cref{fig:edgeless-construction}.

\begin{figure}[ht]
\centering
\begin{adjustbox}{max width=\textwidth}
\begin{tikzpicture}[
	vertex/.style={circle,draw,thick,fill=white,inner sep=1pt,minimum size=21pt,font=\scriptsize},
	guide/.style={fill=black!12,draw=black!65,line width=0.8pt,rounded corners=8pt},
	cross edge/.style={thin,black!34},
	half edge/.style={thin,black!34,line cap=round},
	note/.style={font=\small},
	rowlabel/.style={font=\small}
]
\def\rowleft{-0.55}
\def\rowright{6.55}
\def\rowhalfheight{0.50}
\def\dotsx{3.00}
\def\dotsy{1.35}

\foreach \r/\y/\jlab in {1/4.80/1,2/3.05/2,3/-0.35/{k-1},4/-2.10/k} {
	\begin{pgfonlayer}{background}
		\draw[guide] (\rowleft,\y-\rowhalfheight) rectangle (\rowright,\y+\rowhalfheight);
	\end{pgfonlayer}
	\node[rowlabel] at (-1.70,\y) {$C^{\jlab}\cong K_n$};
	\foreach \i/\ilab/\x in {1/1/0,2/2/1.80,3/{n-1}/4.20,4/n/6.00} {
		\node[vertex] (v-\r-\i) at (\x,\y) {$x_{\ilab}^{\jlab}$};
	}
	\node[note] at (\dotsx,\y) {$\cdots$};
}

\node[note] at (-1.15,\dotsy) {$\vdots$};
\foreach \x in {0,1.80,4.20,6.00} {
	\node[note] at (\x,\dotsy) {$\vdots$};
}
\node[note] at (\dotsx,\dotsy) {$\ddots$};

\begin{pgfonlayer}{background}
	\foreach \a/\b in {1/2,1/3,1/4,2/1,2/3,2/4,3/1,3/2,3/4,4/1,4/2,4/3} {
		\draw[cross edge] (v-1-\a) -- (v-2-\b);
		\draw[cross edge] (v-3-\a) -- (v-4-\b);
	}

	\draw[half edge] (v-2-1.south east) -- (0.80,2.00);
	\draw[half edge] (v-2-1.south east) -- (2.35,2.00);
	\draw[half edge] (v-2-2.south west) -- (1.05,2.00);
	\draw[half edge] (v-2-2.south east) -- (3.05,2.00);
	\draw[half edge] (v-2-3.south west) -- (2.95,2.00);
	\draw[half edge] (v-2-3.south east) -- (5.20,2.00);
	\draw[half edge] (v-2-4.south west) -- (3.65,2.00);
	\draw[half edge] (v-2-4.south west) -- (5.00,2.00);

	\draw[half edge] (v-3-1.north east) -- (0.80,0.70);
	\draw[half edge] (v-3-1.north east) -- (2.35,0.70);
	\draw[half edge] (v-3-2.north west) -- (1.05,0.70);
	\draw[half edge] (v-3-2.north east) -- (3.05,0.70);
	\draw[half edge] (v-3-3.north west) -- (2.95,0.70);
	\draw[half edge] (v-3-3.north east) -- (5.20,0.70);
	\draw[half edge] (v-3-4.north west) -- (3.65,0.70);
	\draw[half edge] (v-3-4.north west) -- (5.00,0.70);
\end{pgfonlayer}

\node[note,black!70] at (0,5.50) {$S_1$};
\node[note,black!70] at (1.80,5.50) {$S_2$};
\node[note,black!70] at (4.20,5.50) {$S_{n-1}$};
\node[note,black!70] at (6.00,5.50) {$S_n$};
\end{tikzpicture}
\end{adjustbox}
\caption{Host $H$ for \cref{thm:edgeless}; each shaded box is a clique.}
\label{fig:edgeless-construction}
\end{figure}

Every independent set of size $k$ in $H$ contains one vertex from each clique, and the cross-edges force all chosen vertices to have the same first index.
Thus, every independent $k$-set of $H$ is of the form $S_i:=\{x_i^1,\dots,x_i^k\}$ for some $1\le i\le n$.
For $i\ne i'$, the sets $S_i$ and $S_{i'}$ differ in all $k\ge 2$ positions, so they are not adjacent in $\TSsym_k(H)$.
Hence, the token-sliding graph with $k$ tokens of $H$ is an independent set of size $n$, and so $\TSsym_k(H)\cong\overline{K_n}$.
\end{proof}

Avis and Hoang realize the complete targets needed below.

\begin{theorem}[{\cite[Cor.~4]{AvisHoang2023}}]
\label{thm:complete-graphs-realizable}
For every fixed $k\ge 2$ and every $n\ge 2$, the complete graph $K_n$ is a $\TSsym_k$-graph.
\end{theorem}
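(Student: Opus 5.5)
The statement is cited from Avis and Hoang, but a direct construction in the spirit of \cref{thm:edgeless} is short, so here is the proof I would write. I would build a host $H$ whose independent $k$-sets are pairwise adjacent in $\TSsym_k(H)$. The natural idea is to fix $k-1$ tokens permanently and let one token range over a clique of size $n$, since any two vertices of a clique are joined by a slide edge.

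Concretely, let $H$ consist of a clique $Q=\{q_1,\dots,q_n\}$ together with $k-1$ isolated vertices $z_1,\dots,z_{k-1}$. Then $\alpha(H)=k$. Every independent $k$-set must contain all the $z_j$: otherwise it would need at least two vertices of $Q$, which is impossible. It then contains exactly one $q_i$. So the independent $k$-sets are
\[
T_i:=\{z_1,\dots,z_{k-1},q_i\},\qquad 1\le i\le n .
\]
For $i\ne i'$ we have $T_i\setminus T_{i'}=\{q_i\}$ and $T_{i'}\setminus T_i=\{q_{i'}\}$, and $q_iq_{i'}\in E(H)$. Hence $T_i$ and $T_{i'}$ are adjacent, and $\TSsym_k(H)\cong K_n$.

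Equivalently, one can note $\TSsym_1(K_n)\cong K_n$ with $\alpha(K_n)=1$ and appeal to \cref{prop:padding} with $k-1$ added isolated vertices. That proposition is stated for $k\ge\alpha(H)$, so it covers this case directly.

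There is essentially no obstacle here. The only point to check is that no independent $k$-set avoids some $z_j$, and this is exactly the clique bound $\alpha(K_n)=1$. The edge case $n\ge2$ is needed only so that the target is a genuine complete graph with an edge; $n=1$ also works trivially.
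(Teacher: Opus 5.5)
Your proof is correct: in $K_n\sqcup(k-1)K_1$ the independent $k$-sets are exactly the $n$ sets $T_i$, and any two of them differ by one slide along a clique edge. This is the same as padding $\TSsym_1(K_n)\cong K_n$ by \cref{prop:padding}. The paper does not reprove this statement (it cites Avis--Hoang), but it uses this same construction, a clique plus isolated vertices, wherever it needs a complete-graph factor, e.g.\ in \cref{cor:product-families}\textup{(a)} and in $\TSsym_2(K_3\sqcup K_1)\cong K_3$ in \cref{cor:cycle-path-positive}.
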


We also use their path and cycle realizations.

\begin{theorem}[{\cite[Cor.~5]{AvisHoang2023}}]
\label{thm:paths-cycles}
The following hold.
\begin{enumerate}[label=\textup{(\alph*)},leftmargin=2em]
\item For every $n\ge 1$, $\TSsym_2(\overline{P_{n+1}})\cong L(P_{n+1})\cong P_n$.
\item For every fixed $k\ge 2$ and every $n\ge 3$, the cycle $C_n$ is a $\TSsym_k$-graph.
For $n\ge 4$, more precisely, $\TSsym_2(\overline{C_n})\cong C_n$.
\end{enumerate}
\end{theorem}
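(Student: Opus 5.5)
For part (a), the plan is to apply \cref{lem:line-avis} directly. The path $P_{n+1}$ is triangle-free, so with $H:=P_{n+1}$ and $G:=\overline{P_{n+1}}$ the lemma gives $\TSsym_2(\overline{P_{n+1}})\cong L(P_{n+1})$. It remains to identify the line graph. Label the edges of $P_{n+1}$ as $e_1,\dots,e_n$ in path order. Then $e_i$ and $e_j$ share an endpoint exactly when $|i-j|=1$, so $L(P_{n+1})\cong P_n$. This includes the degenerate case $n=1$, where the single edge gives $L(P_2)\cong K_1=P_1$.

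For part (b), I split on $n$.

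\textbf{Case $n\ge 4$, $k=2$.} The cycle $C_n$ is triangle-free, so \cref{lem:line-avis} yields $\TSsym_2(\overline{C_n})\cong L(C_n)$. Listing the $n$ edges cyclically, consecutive edges (and only those) share endpoints, so $L(C_n)\cong C_n$. This proves the ``more precisely'' clause.

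\textbf{Case $n\ge 4$, $k>2$.} I would pad using \cref{prop:padding}. Put $H:=\overline{C_n}$, so that
\[
\alpha(H)=\omega(C_n)=2
\]
since $n\ge 4$. In particular $\alpha(H)\ge 1$ and $k\ge\alpha(H)$. Let $H'$ be obtained from $H$ by adding $k-2$ isolated vertices. The proposition gives $\TSsym_k(H')\cong\TSsym_2(H)\cong C_n$.

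\textbf{Case $n=3$.} Here $C_3=K_3$, so the claim for every $k\ge 2$ is the instance $n=3$ of \cref{thm:complete-graphs-realizable}. This case genuinely needs separate treatment. The complement route fails because $\overline{C_3}$ is edgeless, so $\TSsym_2(\overline{C_3})$ has three vertices but no edges; this agrees with the spanning-subgraph part of \cref{lem:line-avis} being strict for the triangle $C_3$.

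I expect no serious obstacle; the argument is a chain of direct citations. The only points needing care are the following.
\begin{enumerate}[label=\textup{(\roman*)}]
\item In the padding step, $\alpha(\overline{C_n})$ must be exactly $2$, which holds precisely because $n\ge 4$ makes $C_n$ triangle-free.
\item The triangle case must be isolated and handled through the complete-graph realization.
\end{enumerate}
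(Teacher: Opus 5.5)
Your proof is correct: each case reduces cleanly to \cref{lem:line-avis} for the triangle-free roots $P_{n+1}$ and $C_n$ ($n\ge 4$), to \cref{prop:padding} with $\alpha(\overline{C_n})=2$, and to \cref{thm:complete-graphs-realizable} for $C_3\cong K_3$, while your observation that $\TSsym_2(\overline{C_3})\cong 3K_1$ correctly explains why the triangle must be handled separately. The paper gives no proof of this statement and only cites Avis--Hoang, but your argument is the natural derivation from the tools it imports, and it matches how the paper uses them: the padding remark immediately after the statement, and the treatment of $C_3\cong K_3$ in \cref{cor:cycle-path-positive}.
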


By the first item and \cref{prop:padding}, the path $P_n$ is a $\TSsym_k$-graph for every fixed $k\ge 2$.

We use their disjoint-union closure to assemble disconnected targets.

\begin{proposition}[{\cite[Prop.~10(a)]{AvisHoang2023}}]
\label{prop:target-disjoint-union}
Let $k\ge 2$.
If $F_1,\dots,F_p$ are $\TSsym_k$-graphs, then their disjoint union $F_1\sqcup\cdots\sqcup F_p$ is also a $\TSsym_k$-graph.
\end{proposition}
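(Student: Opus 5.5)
The plan is to realize the disjoint union by taking the \emph{join} of the host graphs. For each $i\in\{1,\dots,p\}$ fix a host $G_i$ with $\TSsym_k(G_i)\cong F_i$, and take the $G_i$ to have pairwise disjoint vertex sets. Let $G$ be the graph obtained from $G_1\sqcup\cdots\sqcup G_p$ by adding every edge $xy$ with $x\in V(G_i)$, $y\in V(G_j)$ and $i\ne j$. We will show that $\TSsym_k(G)\cong F_1\sqcup\cdots\sqcup F_p$, which proves the statement.

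\textbf{Vertices.} Let $I$ be an independent set of $G$ with $|I|\ge 2$. Since vertices from different parts are all adjacent, $I$ meets only one part, so $I\subseteq V(G_i)$ for a unique $i$. A subset of $V(G_i)$ is independent in $G$ if and only if it is independent in $G_i$, because $G[V(G_i)]=G_i$. Hence the vertex set of $\TSsym_k(G)$ is the disjoint union of the vertex sets of the graphs $\TSsym_k(G_i)$.

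\textbf{Edges within one part.} Let $I,J\subseteq V(G_i)$ be independent $k$-sets. They are adjacent in $\TSsym_k(G)$ exactly when $I\setminus J=\{u\}$, $J\setminus I=\{v\}$ and $uv\in E(G)$. Both $u$ and $v$ lie in $V(G_i)$, so $uv\in E(G)$ holds if and only if $uv\in E(G_i)$. Thus the subgraph of $\TSsym_k(G)$ on these states is exactly $\TSsym_k(G_i)$.

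\textbf{No edges between parts.} This is the only place where $k\ge 2$ is used. Suppose $I\subseteq V(G_i)$ and $J\subseteq V(G_j)$ with $i\ne j$. Then $I$ and $J$ are disjoint, so $|I\setminus J|=k\ge 2$, and the two states cannot be adjacent. (For $k=1$ this step would fail, since the cross edges would join states from different parts.)

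The three steps together give $\TSsym_k(G)\cong F_1\sqcup\cdots\sqcup F_p$. The degenerate case where some $F_i$ is empty is handled automatically: that part contributes no states, and its host can be kept unchanged or dropped. No step is a real obstacle; the point to check carefully is the third step, which is where the hypothesis $k\ge 2$ enters.
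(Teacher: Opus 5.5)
Your proof is correct and complete. Joining the hosts forces every independent $k$-set into a single part, adjacency inside a part is inherited from $G_i$, and states from different parts are disjoint and hence nonadjacent because $k\ge 2$; the paper itself does not reprove this proposition but imports it from Avis--Hoang, and as far as I recall their argument is this same join-of-hosts construction.
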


Avis and Hoang's results cover all targets on at most three vertices; we include the short proof for completeness.

\begin{corollary}
\label{cor:small-graphs-realizable}
For every fixed $k\ge 2$, every graph on at most three vertices is a $\TSsym_k$-graph.
\end{corollary}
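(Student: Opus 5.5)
Since every graph on at most three vertices is isomorphic to one of a short explicit list, the plan is to enumerate the isomorphism types and realize each one from the results already stated. Graphs on $0$ vertices are trivial: the empty graph is $\TSsym_k(K_1)$, because $K_1$ has no independent $k$-set when $k\ge 2$. The remaining graphs on at most three vertices are $K_1$, $2K_1$, $K_2$, $3K_1$, $K_2\sqcup K_1$, $P_3$, and $K_3$.

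The edgeless ones $K_1,2K_1,3K_1$ are $\overline{K_n}$ for $n=1,2,3$, so \cref{thm:edgeless} covers them for every $k\ge 2$. The complete ones $K_2$ and $K_3$ are covered by \cref{thm:complete-graphs-realizable}. For $P_3$, \cref{thm:paths-cycles}(a) gives $\TSsym_2(\overline{P_4})\cong P_3$. To reach larger $k$, we apply \cref{prop:padding} with host $\overline{P_4}$, which has $\alpha(\overline{P_4})=\omega(P_4)=2$. Adding $k-2$ isolated vertices then yields a $\TSsym_k$ realization of $P_3$; this is exactly the remark following \cref{thm:paths-cycles}. Finally, $K_2\sqcup K_1$ is a disjoint union of the $\TSsym_k$-graphs $K_2$ and $K_1$, so \cref{prop:target-disjoint-union} applies.

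There is no real obstacle here. The only points to check are that the list of isomorphism types is complete (there are $1,1,2,4$ graphs on $0,1,2,3$ vertices), and that the padding hypothesis $k\ge\alpha=2$ holds.
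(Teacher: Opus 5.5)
Your proof is correct and follows essentially the paper's approach: handle the empty graph via $\TSsym_k(K_1)$, realize the connected pieces from the known path, cycle, and complete realizations (with padding), and glue them with \cref{prop:target-disjoint-union}. The only difference is cosmetic: the paper treats every nonempty target as a disjoint union of paths and cycles, whereas you enumerate the eight isomorphism types and cite \cref{thm:edgeless} and \cref{thm:complete-graphs-realizable} directly for the edgeless and complete cases.
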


\begin{proof}
We prove this by splitting the target into connected components already known to be realizable.
The empty graph on zero vertices is realized by $\TSsym_k(K_1)$, since $K_1$ has no independent $k$-set for $k\ge 2$ under the convention in \cref{sec:prelim}.
Every non-empty connected graph on at most three vertices is a path or a cycle.
Use the preceding path consequence and \cref{thm:paths-cycles} for connected components, and then use \cref{prop:target-disjoint-union}.
\end{proof}

The following two results use complements that split into complete and isolated components.

\begin{theorem}
\label{thm:complement-complete-bipartite}
For every fixed $k\ge 2$ and all $m,n\ge 1$, the complement $\overline{K_{m,n}}$ is a $\TSsym_k$-graph.
\end{theorem}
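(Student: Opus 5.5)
The plan is to observe that $\overline{K_{m,n}}$ splits into known realizable pieces and then apply disjoint-union closure. Since $K_{m,n}$ has bipartition classes $A$ and $B$ with $|A|=m$ and $|B|=n$, and every edge runs between $A$ and $B$, the complement $\overline{K_{m,n}}$ has exactly the edges inside $A$ and the edges inside $B$, and no edge between them. Hence
\[
\overline{K_{m,n}}\cong K_m\sqcup K_n .
\]
It therefore suffices to realize each of $K_m$ and $K_n$ as a $\TSsym_k$-graph and combine them with \cref{prop:target-disjoint-union}.

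For the components, we split by size. If $m\ge 2$, then $K_m$ is a $\TSsym_k$-graph for every fixed $k\ge 2$ by \cref{thm:complete-graphs-realizable}. If $m=1$, then $K_1\cong\overline{K_1}$ is a $\TSsym_k$-graph by \cref{thm:edgeless} with $n=1$, or alternatively by \cref{cor:small-graphs-realizable}. The same reasoning applies to $K_n$.

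Applying \cref{prop:target-disjoint-union} with $p=2$ to $F_1=K_m$ and $F_2=K_n$ then gives that $K_m\sqcup K_n\cong\overline{K_{m,n}}$ is a $\TSsym_k$-graph.

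There is no real obstacle here. The only points needing care are getting the complement identification right and handling the degenerate case $m=1$ or $n=1$, which \cref{thm:complete-graphs-realizable} does not cover since it assumes $n\ge 2$.
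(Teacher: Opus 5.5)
Your proof is correct and is essentially the paper's own argument. You identify $\overline{K_{m,n}}\cong K_m\sqcup K_n$, realize each complete component with \cref{thm:complete-graphs-realizable} (or \cref{thm:edgeless} when a part has size $1$), and combine them with \cref{prop:target-disjoint-union}.
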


\begin{proof}
We show this by splitting the target into complete components.
The complement of $K_{m,n}$ is the disjoint union $K_m\sqcup K_n$.
For each $r\in\{m,n\}$, the graph $K_r$ is a $\TSsym_k$-graph: if $r=1$, this is the one-vertex edgeless graph and follows from \cref{thm:edgeless}; if $r\ge 2$, it follows from \cref{thm:complete-graphs-realizable}.
Therefore, $\overline{K_{m,n}}$ is a $\TSsym_k$-graph by \cref{prop:target-disjoint-union}.
\end{proof}

The complement of $K_n-e$ is one edge plus isolated vertices.

\begin{theorem}
\label{thm:complement-complete-minus-edge}
For every fixed $k\ge 2$, every $n\ge 2$, and every edge $e\in E(K_n)$, the complement $\overline{K_n-e}$ is a $\TSsym_k$-graph.
\end{theorem}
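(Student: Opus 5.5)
The plan is to split the target into connected components that are already known to be realizable, as in the proof of \cref{thm:complement-complete-bipartite}. Write $e=uv$. In $\overline{K_n-e}$ the only edge is $uv$: every other pair of vertices is adjacent in $K_n-e$, so it is a non-edge of the complement. Hence
\[
\overline{K_n-e}\;\cong\;K_2\sqcup (n-2)K_1 .
\]
The components are therefore one copy of $K_2$ and $n-2$ isolated vertices; when $n=2$ there are no isolated vertices.

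The first component, $K_2$, is a $\TSsym_k$-graph for every fixed $k\ge 2$ by \cref{thm:complete-graphs-realizable} with $n=2$. It is also $P_2$, so the path consequence of \cref{thm:paths-cycles} together with \cref{prop:padding} would serve equally well. Each isolated vertex $K_1=\overline{K_1}$ is a $\TSsym_k$-graph by \cref{thm:edgeless} with $n=1$.

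If $n=2$, the target is $K_2$ itself and we are done. If $n\ge 3$, we apply \cref{prop:target-disjoint-union} to the $1+(n-2)$ components. A simpler alternative is to use just two components, $K_2$ and $\overline{K_{n-2}}$, since \cref{thm:edgeless} realizes $\overline{K_{n-2}}$ directly for $n-2\ge 1$. Either way the disjoint union is a $\TSsym_k$-graph.

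I do not expect a real obstacle here. The only point needing care is the degenerate case $n=2$, where $(n-2)K_1$ is empty and no disjoint union is required.
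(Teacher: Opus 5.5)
Your proposal is correct and follows the paper's proof essentially verbatim. You treat $n=2$ via \cref{thm:complete-graphs-realizable}, and for $n\ge 3$ you write $\overline{K_n-e}\cong K_2\sqcup(n-2)K_1$ and combine \cref{thm:complete-graphs-realizable}, \cref{thm:edgeless}, and \cref{prop:target-disjoint-union}. Your optional variant that groups the isolated vertices into one $\overline{K_{n-2}}$ is also valid, since \cref{prop:target-disjoint-union} does not require the summands to be connected.
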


\begin{proof}
We show this by splitting off the one-edge component from the isolated vertices.
If $n=2$, then $\overline{K_2-e}\cong K_2$, which is a $\TSsym_k$-graph by \cref{thm:complete-graphs-realizable}.
Assume $n\ge 3$.
The only edge of $\overline{K_n-e}$ is the deleted edge~$e$, so $\overline{K_n-e}\cong K_2\sqcup (n-2)K_1$.
The component $K_2$ is a $\TSsym_k$-graph by \cref{thm:complete-graphs-realizable}, and each isolated vertex is a $\TSsym_k$-graph by \cref{thm:edgeless}.
Thus, $\overline{K_n-e}$ is a $\TSsym_k$-graph by \cref{prop:target-disjoint-union}.
\end{proof}

The following self-contained lemma is a diamond obstruction for token-sliding targets.
It shows that, in a target with no independent triple, an induced $K_4-e$ forces an additional independent target vertex.
Both hypotheses are used: the induced $K_4-e$ supplies the local diamond, while the absence of an independent triple rules out the extra token state forced by that diamond.

\begin{lemma}
\label{lem:diamond-independent-obstruction}
Let $k\ge 2$, and let $F$ contain an induced copy of $K_4-e$.
If $F$ has no independent set of size~$3$, then $F$ is not a $\TSsym_k$-graph.
\end{lemma}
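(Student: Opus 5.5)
The plan is to prove the contrapositive. Suppose $F\cong\TSsym_k(G)$ contains an induced $K_4-e$. I will then exhibit three pairwise non-adjacent vertices of $\TSsym_k(G)$. The argument rests on a structural description of triangles in $\TSsym_k(G)$, which pins down the diamond completely.

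\emph{Step 1 (triangles are sunflowers).} Let $I,J,K$ be a triangle in $\TSsym_k(G)$. Any two of these $k$-sets meet in exactly $k-1$ elements. For three $k$-sets with this property there are only two options.
\begin{itemize}
\item[(i)] They share a common $(k-1)$-set $S$, so $I=S\cup\{x\}$, $J=S\cup\{y\}$, $K=S\cup\{z\}$.
\item[(ii)] They lie inside a common $(k+1)$-set $T$, so $I=T-a$, $J=T-b$, $K=T-c$.
\end{itemize}
Option (ii) is impossible. The move $I\to J$ slides along the edge $ab$ of $G$, yet $a,b\in K$ and $K$ is independent. Hence option (i) holds, and the adjacencies force $xy,yz,xz\in E(G)$.

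\emph{Step 2 (shape of the diamond).} Write the induced diamond as two triangles $ABC$ and $ABD$ sharing the edge $AB$, with $CD\notin E(F)$. By Step 1, $A=S\cup\{x\}$, $B=S\cup\{y\}$ and $C=S\cup\{z\}$, where $S=A\cap B$. Applying Step 1 to the triangle $ABD$ gives the same core, since that core must equal $A\cap B=S$. So $D=S\cup\{w\}$ with $w\ne z$. The vertices $C$ and $D$ differ only by $z$ versus $w$. Because $CD$ is not an edge of $\TSsym_k(G)$, we get $zw\notin E(G)$.

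\emph{Step 3 (the third independent state).} Since $k\ge 2$, the core $S$ is non-empty; pick $s\in S$. Set
\[
E:=(S\setminus\{s\})\cup\{z,w\}.
\]
This is a $k$-set. It is independent in $G$: $z$ and $w$ are non-adjacent, and each of them is non-adjacent to $S\setminus\{s\}$ because $C$ and $D$ are independent.

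Compare $E$ with $C$. We have $E\setminus C=\{w\}$ and $C\setminus E=\{s\}$, so $E$ and $C$ are adjacent only if $sw\in E(G)$. But $s,w\in D$, which is independent, so $E$ is not adjacent to $C$. Symmetrically, $E$ is not adjacent to $D$, since $s,z\in C$. Clearly $E\notin\{C,D\}$. Thus $\{C,D,E\}$ is an independent $3$-set of $F$, which proves the lemma.

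The only delicate point is Step 1, namely ruling out the ``co-sunflower'' triangle, and that follows from the short independence argument above. The rest is bookkeeping, together with the use of $k\ge2$ to guarantee that the element $s$ exists.
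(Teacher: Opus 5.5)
Your proof is correct and follows essentially the paper's argument. The paper likewise shows that every common neighbour of $A$ and $B$ has the form $I\cup\{z\}$ with $I=A\cap B$; it does this by a direct case check on which token of $A$ is replaced, rather than through your sunflower/co-sunflower dichotomy for triangles, which is standard but which you state without proof. It then uses the same extra state $(I\setminus\{x\})\cup\{c,d\}$ to obtain the independent triple $C,D,J$.
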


\begin{figure}[ht]
\centering
\begin{adjustbox}{max width=\textwidth}
\begin{tikzpicture}[
	vertex/.style={circle,draw,thick,inner sep=1pt,minimum size=18pt,font=\small},
	edge/.style={thick},
	missing/.style={thick,densely dashed},
	label/.style={font=\small}
]
\node[vertex] (A) at (0,0) {$A$};
\node[vertex] (B) at (2.2,0) {$B$};
\node[vertex] (C) at (2.2,2) {$C$};
\node[vertex] (D) at (0,2) {$D$};

\draw[edge] (A) -- (B);
\draw[edge] (A) -- (C);
\draw[edge] (A) -- (D);
\draw[edge] (B) -- (C);
\draw[edge] (B) -- (D);
\draw[missing] (C) -- (D);
\node[label] at (1.1,-0.55) {$K_4-e$};
\end{tikzpicture}
\end{adjustbox}
\caption{The obstruction for \cref{lem:diamond-independent-obstruction}; the dashed pair is the unique missing edge.}
\label{fig:k4-minus-edge}
\end{figure}

\begin{proof}
Suppose to the contrary that $F\cong\TSsym_k(G)$ for some graph~$G$.
We consider an induced $K_4-e$ subgraph of~$F$ with vertices $A,B,C,D$, where the naming is consistent with \cref{fig:k4-minus-edge}: the unique missing edge is~$CD$.
Identify $A,B,C,D$ with the corresponding independent $k$-sets of~$G$.

The common edge $AB$ fixes a common size-$(k-1)$ set.
Indeed, let $A=I\cup\{a\}$ and $B=I\cup\{b\}$, where $|I|=k-1$ and $ab\in E(G)$.
Any independent $k$-set adjacent to both $A$ and $B$ and distinct from them must have the form $I\cup\{z\}$.
Indeed, let $X$ be such a set.
From adjacency to $A$, either $X$ replaces $a$, giving $X=I\cup\{z\}$, or $X$ keeps $a$ and replaces some $x\in I$.
In the latter case, if the new vertex is $b$, then the set contains both $a$ and $b$, contradicting $ab\in E(G)$.
Otherwise, it differs from $B$ in two token positions and hence is not adjacent to $B$.
The case where $X$ keeps $b$ and replaces a token of $I$ is symmetric.
Therefore, for distinct vertices $c,d\notin I\cup\{a,b\}$, we have $C=I\cup\{c\}$ and $D=I\cup\{d\}$.
Since $CD$ is not an edge of $F$, we have $cd\notin E(G)$.
Also, because $C$ and $D$ are independent, neither $c$ nor $d$ is adjacent in $G$ to any vertex of~$I$.

Since $k\ge 2$, the set $I$ is nonempty.
Choose $x\in I$ and set $J:=(I\setminus\{x\})\cup\{c,d\}$.
The preceding paragraph shows that $J$ is also an independent $k$-set of~$G$ and therefore a vertex of~$F$.
Moreover, $J\ne C$ because $J$ contains $d$ while $C$ does not, and $J\ne D$ because $J$ contains $c$ while $D$ does not.
The vertex $J$ is not adjacent to $C$, because such an adjacency would require the slide edge $xd$, which does not exist.
Similarly, $J$ is not adjacent to $D$.
Thus, $C,D,J$ induce an independent set of size~$3$ in~$F$, a contradiction.
\end{proof}

We classify complements of paths and cycles.

\begin{theorem}
\label{thm:complement-paths}
For every $k\ge 2$ and every $n\ge 1$, the complement $\overline{P_n}$ is a $\TSsym_k$-graph if and only if $n\le 5$.
\end{theorem}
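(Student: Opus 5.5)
Both directions use results already established in this section. The negative direction will come from \cref{lem:diamond-independent-obstruction}, and the positive direction from small-case identifications together with disjoint-union closure.

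\emph{Negative direction ($n\ge 6$).} Label $P_n$ as $v_1v_2\cdots v_n$. Then $\alpha(\overline{P_n})=\omega(P_n)=2$, so $\overline{P_n}$ has no independent set of size~$3$. I will exhibit an induced $K_4-e$ in $\overline{P_n}$, meaning four vertices spanning exactly one edge of $P_n$. For $n\ge 6$, take $\{v_1,v_2,v_4,v_6\}$. Among these four vertices, the only path edge is $v_1v_2$, so in the complement they induce $K_4$ minus the edge $v_1v_2$. \Cref{lem:diamond-independent-obstruction} then shows that $\overline{P_n}$ is not a $\TSsym_k$-graph for any $k\ge 2$. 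The choice $n\ge 6$ is exactly what is needed: in $P_5$, any four vertices span at least two path edges, which explains why the cutoff sits at~$5$.

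\emph{Positive direction ($n\le 5$).} I treat the cases $n\le 3$ together and then $n=4,5$ separately.
\begin{itemize}
\item For $n\le 3$, \cref{cor:small-graphs-realizable} applies directly.
\item For $n=4$, $\overline{P_4}\cong P_4$, and paths are $\TSsym_k$-graphs for all $k\ge 2$ by \cref{thm:paths-cycles} and \cref{prop:padding}.
\item For $n=5$, I compute $\overline{P_5}$ explicitly. Its non-adjacent pairs are the path edges, so its edges are $v_1v_3,v_1v_4,v_1v_5,v_2v_4,v_2v_5,v_3v_5$. This graph is the $5$-cycle $v_1v_3v_5v_2v_4v_1$ together with the chord $v_1v_5$, i.e.\ the house graph.
\end{itemize}

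\emph{Main obstacle: realizing the house for every $k\ge 2$.} For $k=2$ the plan is to find a complement root $F$ with $\TSsym_2(\overline F)$ equal to the house. The house contains a triangle, so by \cref{prop:claw-iff-triangle-free} the root $F$ must contain an induced claw. By \cref{lem:sizebound}, $F$ can be taken connected with $5$ edges and at most $6$ vertices. Degrees can be checked with \cref{prop:per-edge-degree}: we need degree sequence $(3,3,2,2,2)$ with the two degree-$3$ vertices adjacent.

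My first candidate is a claw $x$–$\{a,b,c\}$ with extra edges $aa'$ and $bb'$ (a spider with legs $2,2,1$). There are no triangles, so the degrees are $\deg(xa)=\deg(xb)=3$, $\deg(xc)=2$, and $\deg(aa')=\deg(bb')=1$. This is wrong, since it produces degree-$1$ vertices.

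So I will search more systematically. One option is a claw plus one triangle-forming edge. Another is a path plus one extra leaf, or a $C_4$ with a pendant edge; for the latter, note that the $4$-cycle edges are not triangles. I will check adjacency via the sliding condition: two edges sharing an endpoint are adjacent in $\TSsym_2(\overline F)$ exactly when their other endpoints are non-adjacent in~$F$.

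Take $F$ to be the $4$-cycle $abcd$ with pendant $ae$. Then:
\begin{itemize}
\item $ae$ is adjacent to $ab$ and $ad$ (since $eb$ and $ed$ are non-edges), so $ab,ad,ae$ form a triangle;
\item $ab$ is adjacent to $bc$, since $ac$ is a non-edge;
\item $ad$ is adjacent to $dc$;
\item $ab$ and $ad$ are adjacent, since $bd$ is a non-edge;
\item $bc$ and $cd$ are adjacent, since $bd$ is a non-edge.
\end{itemize}
The resulting graph is the $5$-cycle $ab,bc,cd,da,ae$, closing via $ae$–$ab$, plus the chord $ab$–$ad$. That is the house, which settles $k=2$.

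For $k\ge 3$, padding with isolated vertices fails, because $\alpha(\overline{C_4\text{+pendant}})$ is not~$2$. I would instead try to reduce to known pieces. If nothing simpler works, the fallback is a direct construction: take the $k=2$ host and join $k-2$ extra cliques with the cross-edge pattern of \cref{thm:edgeless}, so that each additional token is forced into a rigid position. This generalization to all $k$ is the step I expect to be the real obstacle.
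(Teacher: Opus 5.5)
Your negative direction and the cases $n\le 4$ agree with the paper. Your $k=2$ construction for $\overline{P_5}$ is also correct. With $F$ the $4$-cycle $abcd$ plus the pendant edge $ae$, your adjacency check does give the house. Up to relabeling, this $F$ is exactly the complement of the paper's host, whose non-edges $13,35,54,41,52$ form a $4$-cycle with a pendant edge.

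The gap is the case $n=5$, $k\ge 3$, which you leave open because of a false claim. You say padding fails since $\alpha(\overline{F})\ne 2$. But $\alpha(\overline{F})=\omega(F)$, and $F$ is triangle-free (indeed bipartite), so $\alpha(\overline{F})=2$. Therefore \cref{prop:padding} applies verbatim. Adding $k-2$ isolated vertices to $\overline{F}$ gives a graph whose $\TSsym_k$ is isomorphic to $\TSsym_2(\overline{F})\cong\overline{P_5}$, for every $k\ge 2$. This is exactly how the paper concludes. Your fallback of joining $k-2$ extra cliques via the cross-edge pattern of \cref{thm:edgeless} is never made precise or checked. As written, the proposal therefore does not prove the statement for $n=5$ and $k\ge 3$. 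Once the independence number is computed correctly, the fallback is unnecessary.
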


\begin{proof}
We prove the two directions separately.
\begin{itemize}
\item[$(\Leftarrow)$]
Assume $n\le 5$.
For $n\le 3$, the graph $\overline{P_n}$ has at most three vertices, so it is a $\TSsym_k$-graph by \cref{cor:small-graphs-realizable}.
For $n=4$, the graph $\overline{P_4}$ is isomorphic to $P_4$, and hence is a $\TSsym_k$-graph by the path consequence of \cref{thm:paths-cycles,prop:padding}.

It remains to handle $\overline{P_5}$.
Let $H$ have vertex set $\{1,2,3,4,5\}$ and edges $\{1,2\}$, $\{1,5\}$, $\{2,3\}$, $\{2,4\}$, and $\{3,4\}$.
The non-edges of $H$ are $\{1,3\}$, $\{1,4\}$, $\{2,5\}$, $\{3,5\}$, and $\{4,5\}$.
The graph formed by these non-edges is triangle-free, so $\alpha(H)=2$, and the independent $2$-sets are
\[
\{4,5\},\{1,3\},\{2,5\},\{1,4\},\{3,5\}.
\]
In the displayed linear order, two listed sets fail to be adjacent in $\TSsym_2(H)$ exactly when they are consecutive: either they are disjoint, or the exchanged vertices are not adjacent in~$H$.
Thus, $\TSsym_2(H)\cong\overline{P_5}$.
Since $\alpha(H)=2$, \cref{prop:padding} gives $\TSsym_k(H\sqcup (k-2)K_1)\cong\TSsym_2(H)\cong\overline{P_5}$ for every $k\ge 2$.

\item[$(\Rightarrow)$]
We prove the contrapositive.
Suppose $n\ge 6$.
The vertices $1,2,4,6$ induce exactly one edge in $P_n$, so they induce a $K_4-e$ in $\overline{P_n}$.
Since $P_n$ has clique number~$2$, the graph $\overline{P_n}$ has no independent set of size~$3$.
By \cref{lem:diamond-independent-obstruction}, $\overline{P_n}$ is not a $\TSsym_k$-graph.
\end{itemize}
\end{proof}

The cycle case has the same cutoff pattern, with a separate construction for $n=6$.

\begin{theorem}
\label{thm:complement-cycles}
For every $k\ge 2$ and every $n\ge 3$, the complement $\overline{C_n}$ is a $\TSsym_k$-graph if and only if $n\le 6$.
\end{theorem}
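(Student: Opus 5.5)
We follow the pattern of \cref{thm:complement-paths}: explicit constructions for $n\le 6$, and \cref{lem:diamond-independent-obstruction} for $n\ge 7$.

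\emph{Obstruction for $n\ge 7$.}
Label the cycle $1,2,\dots,n$ cyclically and take the vertices $1,2,4,6$.
In $C_n$ the only edge among them is $\{1,2\}$: the pair $\{6,1\}$ is an edge only when $n=6$, and $\{4,6\}$ and $\{2,4\}$ are never edges.
Hence these four vertices induce $K_4-e$ in $\overline{C_n}$.
Since $n\ge 4$, the cycle $C_n$ is triangle-free, so $\alpha(\overline{C_n})=\omega(C_n)=2$.
\Cref{lem:diamond-independent-obstruction} then excludes $\overline{C_n}$ for every $k\ge 2$.

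\emph{Small cases $n\le 5$.}
For $n=3$, $\overline{C_3}=3K_1$, which is realizable by \cref{thm:edgeless} (or by \cref{cor:small-graphs-realizable}).
For $n=4$, $\overline{C_4}=2K_2$, which follows from \cref{thm:complete-graphs-realizable} and \cref{prop:target-disjoint-union}.
For $n=5$, $\overline{C_5}\cong C_5$, which is a $\TSsym_k$-graph by \cref{thm:paths-cycles}.

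\emph{The case $n=6$.}
Here $\overline{C_6}$ is the triangular prism: two disjoint triangles $\{1,3,5\}$ and $\{2,4,6\}$ joined by the perfect matching $\{1,4\},\{3,6\},\{5,2\}$.
I would realize it at $k=2$ and then pad with \cref{prop:padding}.
By \cref{lem:sizebound} it suffices to find a connected complement root $F$ with $6$ edges and at most $7$ vertices such that $\TSsym_2(\overline F)\cong\overline{C_6}$.

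Since the prism contains triangles, \cref{prop:claw-iff-triangle-free} forces $F$ to contain an induced claw.
A claw contributes exactly one triangle in the two-token graph, so we want two claws whose centers are joined appropriately.
\Cref{prop:per-edge-degree} further requires every edge $ab$ of $F$ to satisfy $\deg(a)+\deg(b)-2-2t(ab)=3$.

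The first candidate is the "double star" $S_{3,3}$: two centers $u,v$ joined by an edge, each with two further leaves.
This has only $5$ edges, so it must be enlarged.
The next try is $F$ = two disjoint claws centered at $u$ and $v$ (leaves $u_1,u_2,u_3$ and $v_1,v_2,v_3$) with the leaves matched by edges $u_iv_i$.
That graph has $9$ edges, which is too many, so the degree condition will have to guide the search.
Concretely, I would search over $6$-edge connected graphs whose degree-sum condition gives $3$ on every edge and which contain exactly two induced claws, with their triangle edge-sets disjoint and matched.

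A natural guess is $F=K_{3,3}$ minus a perfect matching plus adjustments; but $K_{3,3}$ minus a perfect matching is $C_6$, which yields $\TSsym_2(\overline{C_6})=C_6$, not the prism.
So instead I would try the graph on vertices $\{a,b,c,x,y,z\}$ with $a$ adjacent to $x,y,z$, $b$ adjacent to $x,y,z$, and so on.
The main obstacle is finding this $F$; once a candidate is found, the remaining task is to verify directly the $6$ vertices and their $9$ adjacencies.

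If no host with $\alpha=2$ works, the fallback is to build a three-token host directly: place the two triangles as slides of a single token within two cliques, with the matching edges realized by a second token's move.
Alternatively, one can use the product formula $\overline{C_6}\cong K_3\square K_2$ from \cref{sec:product-construction}, but that formula comes later in the paper.
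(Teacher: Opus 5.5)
Your obstruction for $n\ge 7$ (the diamond on $\{1,2,4,6\}$ together with $\alpha(\overline{C_n})=2$) and your treatment of $n\in\{3,4,5\}$ are correct and match the paper's argument. The gap is the case $n=6$: you never exhibit a host. The proposal ends with ``the main obstacle is finding this $F$,'' and the candidates you offer do not settle it. If ``and so on'' in your last guess means that $c$ is also joined to $x,y,z$, then $F=K_{3,3}$ has nine edges. In that case $\overline F=2K_3$ and $\TSsym_2(2K_3)\cong K_3\square K_3$, which has nine states and is not the prism. Your three-token fallback is too vague to check, and more than two tokens are not needed.

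The missing construction is short. Take $H:=K_3\sqcup K_2\sqcup(k-2)K_1$. Every independent $k$-set of $H$ consists of the $k-2$ isolated vertices, one vertex of the $K_3$, and one vertex of the $K_2$. Isolated vertices admit no slides, so two such sets are adjacent exactly when one of the two clique choices changes and the other is fixed. Hence $\TSsym_k(H)\cong K_3\square K_2$, the triangular prism, which you already correctly identified with $\overline{C_6}$. This is the paper's proof.

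For $k=2$ this is precisely your complement-root search succeeding with $F=\overline{K_3\sqcup K_2}=K_{2,3}$, which has six edges. Your guess ``$a$ and $b$ both adjacent to $x,y,z$'' is exactly this graph if you stop at two centers.

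No appeal to the later product formula is needed. Citing it would not be circular anyway, since it does not depend on this theorem. With this step added, your proof is complete.
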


\begin{proof}
We prove the two directions separately.
\begin{itemize}
\item[$(\Leftarrow)$]
Assume $n\le 6$.
For $n=3$, the graph $\overline{C_3}$ is edgeless and is covered by \cref{thm:edgeless}.
For $n=4$, we have $\overline{C_4}\cong 2K_2$, which is a $\TSsym_k$-graph by \cref{thm:complete-graphs-realizable,prop:target-disjoint-union}.
For $n=5$, the cycle $C_5$ is self-complementary, so \cref{thm:paths-cycles} applies.

For $n=6$, we construct a graph $H$ such that $\TSsym_k(H)\cong\overline{C_6}$.
Take $H:=K_3\sqcup K_2\sqcup (k-2)K_1$.
The construction appears in \cref{fig:c6-realization}; the fixed isolated vertices are omitted from the state labels.
Thus every independent set of size $k$ in $H$ contains all $k-2$ isolated vertices, one vertex from the $K_3$, and one vertex from the $K_2$.
As there are three choices in $K_3$ and two choices in $K_2$, the graph $\TSsym_k(H)$ has six vertices.
One can only move the token in the $K_3$ or the token in the $K_2$; hence two such independent sets are adjacent exactly when one of these two choices changes and the other one is fixed.
Hence, $\TSsym_k(H)\cong K_3\square K_2$.
The graph $K_3\square K_2$ is the triangular prism.
If the states are labelled $ax,bx,cx,ay,by,cy$, then the complement contains the cycle
\[
ax,\,by,\,cx,\,ay,\,bx,\,cy,\,ax.
\]
The graph $K_3\square K_2$ has six vertices and nine edges, so its complement has exactly six edges.
The displayed cycle accounts for all six complement edges.
Thus the complement is $C_6$, and $K_3\square K_2\cong \overline{C_6}$.

\begin{figure}[ht]
\centering
\begin{adjustbox}{max width=\textwidth}
\begin{tikzpicture}[
	vertex/.style={circle,draw,thick,fill=white,inner sep=1pt,minimum size=18pt,font=\small},
	state/.style={circle,draw,thick,fill=white,inner sep=1pt,minimum size=26pt,font=\small},
	edge/.style={thick},
	guide/.style={fill=black!7,draw=black!50,densely dotted,rounded corners=2pt},
	note/.style={font=\small},
	component/.style={font=\small}
]
\draw[guide] (-4.55,-0.95) rectangle (2.65,2.85);
\node[note,inner sep=1pt] at (-4.25,2.58) {$H$};

\node[vertex] (a) at (-3.2,1.4) {$a$};
\node[vertex] (b) at (-4.0,0) {$b$};
\node[vertex] (c) at (-2.4,0) {$c$};
\draw[edge] (a) -- (b) -- (c) -- (a);
\node[component] at (-3.2,-0.65) {$K_3$};

\node[vertex] (x) at (-0.8,1.15) {$x$};
\node[vertex] (y) at (-0.8,0.15) {$y$};
\draw[edge] (x) -- (y);
\node[component] at (-0.8,-0.65) {$K_2$};

\node[vertex] (f1) at (1.25,1.65) {};
\node[vertex] (f2) at (1.25,0.95) {};
\node[note] at (1.25,0.35) {$\vdots$};
\node[vertex] (f3) at (1.25,-0.35) {};
\node[component] at (1.25,2.45) {fixed $(k-2)K_1$};

\node[note] at (6.0,2.55) {$\TSsym_k(H)\cong K_3\square K_2$};
\coordinate (pax) at (4.35,1.65);
\coordinate (pbx) at (3.65,0.65);
\coordinate (pcx) at (4.35,-0.35);
\coordinate (pay) at (7.65,1.65);
\coordinate (pby) at (8.35,0.65);
\coordinate (pcy) at (7.65,-0.35);
\draw[edge] (pax) -- (pbx) -- (pcx) -- (pax);
\draw[edge] (pay) -- (pby) -- (pcy) -- (pay);
\draw[edge] (pax) -- (pay);
\draw[edge] (pbx) -- (pby);
\draw[edge] (pcx) -- (pcy);
\node[state] at (pax) {$ax$};
\node[state] at (pbx) {$bx$};
\node[state] at (pcx) {$cx$};
\node[state] at (pay) {$ay$};
\node[state] at (pby) {$by$};
\node[state] at (pcy) {$cy$};
\end{tikzpicture}
\end{adjustbox}
\caption{The $n=6$ construction for \cref{thm:complement-cycles}: $H:=K_3\sqcup K_2\sqcup (k-2)K_1$ and $\TSsym_k(H)\cong K_3\square K_2\cong\overline{C_6}$. Fixed isolated vertices are omitted from the state labels.}
\label{fig:c6-realization}
\end{figure}

\item[$(\Rightarrow)$]
We again prove the contrapositive.
Suppose $n\ge 7$.
The vertices $1,2,4,6$ induce exactly one edge in $C_n$, so they induce a $K_4-e$ in $\overline{C_n}$.
Since $C_n$ has clique number~$2$, the graph $\overline{C_n}$ has no independent set of size~$3$.
By \cref{lem:diamond-independent-obstruction}, $\overline{C_n}$ is not a $\TSsym_k$-graph.
\end{itemize}
\end{proof}

\section{Products of Some Basic Graphs}
\label{sec:product-construction}

This section turns disjoint unions in the host graph into Cartesian products in the token graph.
We prove the product formula and full-token case, apply them to basic Cartesian-product families and cycle--path products, and record the first two-token product obstructions.
The positive results use \cref{thm:product-lemma,cor:product-saturation} together with the path, cycle, complete-graph, and padding constructions from the previous section.
The negative two-token results use complement roots, the local degree formula of \cref{prop:per-edge-degree}, and induced four-cycle counts.

\begin{theorem}
\label{thm:product-lemma}
Let $G_1,G_2$ be graphs with disjoint vertex sets, let $G:=G_1\sqcup G_2$, and let $k\ge 0$ be an integer.
Then
\[
\TSsym_k(G)\;\cong\;\bigsqcup_{i=0}^{k}\big(\TSsym_{k-i}(G_1)\,\square\,\TSsym_i(G_2)\big),
\]
where the right-hand side is a disjoint union of Cartesian products and a factor is taken to be the empty graph on zero vertices whenever the requested token number exceeds the corresponding component's independence number.
\end{theorem}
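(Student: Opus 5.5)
The plan is to write down an explicit bijection and then check that it preserves adjacency in both directions. Every independent $k$-set $I$ of $G=G_1\sqcup G_2$ splits uniquely as $I=I_1\sqcup I_2$ with $I_1:=I\cap V(G_1)$ and $I_2:=I\cap V(G_2)$. Since $G$ has no edges between $V(G_1)$ and $V(G_2)$, the set $I$ is independent in $G$ if and only if $I_1$ is independent in $G_1$ and $I_2$ is independent in $G_2$. Put $i:=|I_2|$, so $|I_1|=k-i$ with $0\le i\le k$. Define
\[
\Phi(I):=(I_1,I_2)\in V\big(\TSsym_{k-i}(G_1)\big)\times V\big(\TSsym_i(G_2)\big).
\]
This is a bijection from $V(\TSsym_k(G))$ onto the vertex set of the disjoint union $\bigsqcup_{i=0}^k \TSsym_{k-i}(G_1)\square \TSsym_i(G_2)$; its inverse is $(J_1,J_2)\mapsto J_1\cup J_2$. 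The conventions match up as stated: $\TSsym_0$ is the one-vertex graph on the empty set, and a factor is empty exactly when no independent set of the requested size exists. In that case the corresponding product is empty and no $I$ with that value of $i$ exists either.

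Next I would show that $\Phi$ preserves edges. Suppose $I,J$ are adjacent in $\TSsym_k(G)$ via a slide edge $xy\in E(G)$, with $I\setminus J=\{x\}$ and $J\setminus I=\{y\}$. Because $G$ has no cross edges, $x$ and $y$ lie in the same $G_\ell$. Say $\ell=1$ (the case $\ell=2$ is symmetric). Then $I_2=J_2$, so both sets have the same index $i$. Also $I_1\setminus J_1=\{x\}$, $J_1\setminus I_1=\{y\}$ and $xy\in E(G_1)$, so $I_1J_1$ is an edge of $\TSsym_{k-i}(G_1)$. Hence $\Phi(I)\Phi(J)$ is a Cartesian-product edge in the $i$-th component.

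For the converse, take an edge of the $i$-th product. By definition of $\square$ it has the form $(J_1,J_2)(J_1',J_2)$ with $J_1J_1'\in E(\TSsym_{k-i}(G_1))$, or the symmetric form in the second coordinate. The sets $J_1\cup J_2$ and $J_1'\cup J_2$ then differ by exactly one token, and that token moves along an edge of $G_1\subseteq G$. Both sets are independent in $G$ by the first paragraph, so they are adjacent in $\TSsym_k(G)$.

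There is no real obstacle here; the proof is routine bookkeeping. The one point that needs care is checking that no edge of $\TSsym_k(G)$ joins two different index classes $i\ne i'$, which is what makes the right-hand side a \emph{disjoint} union. This follows from the second paragraph: a slide keeps the token inside one $G_\ell$, so $|I\cap V(G_2)|$ is invariant under moves. The remaining care is with the degenerate cases ($k=0$, $i=0$ or $i=k$, and empty factors), which I would dispatch using the conventions fixed in \cref{sec:prelim}.
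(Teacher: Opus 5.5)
Your proposal is correct and follows essentially the same route as the paper. The paper also uses the bijection $S\mapsto(S\cap V(G_1),S\cap V(G_2))$ into the $|S\cap V(G_2)|$-th summand, together with the observation that every slide stays inside one component. Hence adjacency in $\TSsym_k(G)$ corresponds exactly to Cartesian-product adjacency within a single summand, and no edge crosses summands. Your check of both directions of edge preservation, and of the degenerate cases, is slightly more explicit than the paper's.
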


\begin{proof}
Let $H$ be the graph on the right-hand side.
We verify that $\TSsym_k(G)\cong H$ by using the natural component map.
Every independent $k$-set $S$ of $G$ decomposes uniquely as $S=S_1\sqcup S_2$, where $S_1:=S\cap V(G_1)$ and $S_2:=S\cap V(G_2)$.
Both sets are independent in their respective components, and $|S_1|+|S_2|=k$.
Let $i:=|S_2|$.
The map
\[
\varphi\colon V(\TSsym_k(G))\to V(H)
\]
that sends $S$ to $(S_1,S_2)$ in the $i$-th summand is a bijection.

Observe that any token-slide in $G$ can only be along an edge of either $G_1$ or~$G_2$.
Thus, two size-$k$ independent sets $S,S'$ of $G$ differ by one token-slide if and only if exactly one of $\{S_1,S_1'\}$ and $\{S_2,S_2'\}$ differs by one token-slide and the other is fixed.
This means precisely that $\varphi(S)$ and $\varphi(S')$ lie in the same summand and are adjacent in the Cartesian product $\TSsym_{k-i}(G_1)\square \TSsym_i(G_2)$.
No edge of $\TSsym_k(G)$ crosses two summands.
Therefore, $\varphi$ is a graph isomorphism.
\end{proof}

When all tokens must be used, only one Cartesian-product summand remains.

\begin{corollary}
\label{cor:product-saturation}
Let $G_1,G_2$ be graphs with $\alpha(G_1)=a\ge 1$ and $\alpha(G_2)=b\ge 1$.
Then $\TSsym_{a+b}(G_1\sqcup G_2)\cong \TSsym_a(G_1)\square \TSsym_b(G_2)$.
\end{corollary}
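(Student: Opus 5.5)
The plan is to apply \cref{thm:product-lemma} with $k:=a+b$ and show that all but one summand on the right-hand side vanish. The formula gives
\[
\TSsym_{a+b}(G_1\sqcup G_2)\;\cong\;\bigsqcup_{i=0}^{a+b}\big(\TSsym_{a+b-i}(G_1)\,\square\,\TSsym_i(G_2)\big).
\]
I will then examine the two factors of each summand separately.

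For the $i$-th summand, the factor $\TSsym_i(G_2)$ is the empty graph whenever $i>b=\alpha(G_2)$, because $G_2$ then has no independent $i$-set. Likewise, $\TSsym_{a+b-i}(G_1)$ is empty whenever $a+b-i>a$, that is, whenever $i<b$. A Cartesian product with an empty factor has vertex set $\emptyset\times V(\cdot)=\emptyset$, so it is the empty graph. Therefore every summand with $i\ne b$ is empty and contributes nothing to the disjoint union.

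The only summand left is $i=b$, which equals $\TSsym_a(G_1)\square\TSsym_b(G_2)$. Both factors are non-empty because $a=\alpha(G_1)$ and $b=\alpha(G_2)$ are attained, and the hypotheses $a,b\ge 1$ ensure we are not in the degenerate zero-token convention (although that convention would also be harmless). Removing the empty summands from the disjoint union gives the claimed isomorphism.

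None of these steps should be a real obstacle. The one point to state explicitly is that a Cartesian product with the zero-vertex graph is again the zero-vertex graph, which follows immediately from the definition of the vertex set as $V(G)\times V(H)$.
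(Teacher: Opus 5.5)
Your proposal is correct and follows essentially the same route as the paper: apply \cref{thm:product-lemma} with $k=a+b$ and note that the summands with $i<b$ vanish because the $G_1$-factor is empty, and those with $i>b$ vanish because the $G_2$-factor is empty, leaving only the summand $i=b$. Your explicit remark that a Cartesian product with the zero-vertex graph is empty is a detail the paper leaves implicit.
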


\begin{proof}
To prove the full-token case, we observe that all summands but one are empty.
In the formula of \cref{thm:product-lemma} with $k=a+b$, the factor $\TSsym_{k-i}(G_1)$ is empty for $k-i>a$ (i.e.\ $i<b$) and $\TSsym_i(G_2)$ is empty for $i>b$, so the unique non-empty summand is $i=b$, giving $\TSsym_a(G_1)\square \TSsym_b(G_2)$.
\end{proof}

The product formula gives several product families.

\begin{corollary}
\label{cor:product-families}
For a fixed $k\ge 2$, the following Cartesian products are $\TSsym_k$-graphs.
\begin{enumerate}[label=\textup{(\alph*)}]
\item $K_n\square K_m$ for all $m,n\ge 1$.
\item If $k\ge 3$, then $P_n\square K_m$ for all $m,n\ge 1$.
\item If $k\ge 3$, then $C_n\square K_m$ for all $m\ge 1$ and all $n\ge 3$.
\item If $k\ge 4$, then $C_m\square C_n$ for all $m,n\ge 3$.
\end{enumerate}
\end{corollary}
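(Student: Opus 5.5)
The plan is to realize each product by choosing two hosts $G_1,G_2$ with small independence numbers $a=\alpha(G_1)$ and $b=\alpha(G_2)$, such that $\TSsym_a(G_1)$ and $\TSsym_b(G_2)$ are the two factors. Then \cref{cor:product-saturation} gives $\TSsym_{a+b}(G_1\sqcup G_2)$ as the product. Finally, \cref{prop:padding} lifts the token count from $a+b$ to any $k\ge a+b$; this needs $\alpha(G_1\sqcup G_2)=a+b$, which holds because independence number is additive over disjoint unions. The required thresholds $k\ge 2,3,3,4$ are exactly $a+b$ for the cheapest factor hosts. So the proof reduces to listing, for each factor, a host with the smallest possible independence number.

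The factor hosts are as follows.

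For $K_m$, take the host $K_m$ itself with one token. Then $\alpha=1$, and $\TSsym_1(K_m)\cong K_m$, since any two distinct vertices are adjacent, so every pair of one-token states differs by a slide. This also covers $m=1$.

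For $P_n$ with $n\ge 1$, \cref{thm:paths-cycles}(a) gives $\TSsym_2(\overline{P_{n+1}})\cong P_n$. We need $\alpha(\overline{P_{n+1}})=\omega(P_{n+1})=2$, which requires $n+1\ge 2$. For $n=1$, $\overline{P_2}=2K_1$ with $\alpha=2$, which is fine.

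For $C_n$ with $n\ge 4$, \cref{thm:paths-cycles}(b) gives $\TSsym_2(\overline{C_n})\cong C_n$ with $\alpha(\overline{C_n})=\omega(C_n)=2$. The case $C_3$ must be handled separately, since $\overline{C_3}$ is edgeless. Here use $C_3\cong K_3=\TSsym_1(K_3)$ with $\alpha=1$, or alternatively take $\TSsym_2$ of $\overline{K_{1,3}}=K_3\sqcup K_1$. The latter gives $\TSsym_1(K_3)\square\TSsym_1(K_1)\cong K_3$, with $\alpha=2$, by \cref{cor:product-saturation}. For uniformity in (c) and (d), it is convenient to have an $\alpha=2$ host for $C_3$ as well, and $K_3\sqcup K_1$ provides one.

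The cases then follow.

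(a) Take $G_1=K_n$ and $G_2=K_m$. Then $a+b=2$, so $\TSsym_2(K_n\sqcup K_m)\cong K_n\square K_m$, and padding gives every $k\ge 2$.

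(b) Take $G_1=\overline{P_{n+1}}$ and $G_2=K_m$. Then $a+b=3$, which gives every $k\ge 3$.

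(c) Take $G_1$ to be a host of $C_n$ with $\alpha=2$ (either $\overline{C_n}$ for $n\ge 4$, or $K_3\sqcup K_1$ for $n=3$) and $G_2=K_m$. Then $a+b=3$.

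(d) Take two $\alpha=2$ cycle hosts. Then $a+b=4$.

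The main point to check carefully is that padding applies. \cref{prop:padding} requires the exact value $\alpha(G_1\sqcup G_2)$, not merely a lower bound, so each host's independence number must be verified. It also requires the realizing identity to be at exactly $\TSsym_{\alpha}$. For the complements of paths and cycles, this is the clique number of a triangle-free graph with at least one edge, which is $2$, with the $n=1$ path case checked directly. The $C_3$ case is the only place where the cited theorem does not apply directly, and the $K_3\sqcup K_1$ host resolves it.
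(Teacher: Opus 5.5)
Your proposal is correct and follows essentially the paper's route: \cref{cor:product-saturation} applied to the same factor hosts ($K_m$, $\overline{P_{n+1}}$, $\overline{C_n}$), combined with \cref{prop:padding}. The only differences are cosmetic. You pad the whole disjoint union at the end, whereas the paper pads one factor first in (a)--(c). You also treat $C_3$ with the $\alpha=2$ host $K_3\sqcup K_1$, whereas the paper reduces that case to parts (a) and (c).
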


\begin{proof}
We apply the full-token product formula to the known realizations.
\begin{enumerate}[label=\textup{(\alph*)},leftmargin=2em]
\item Take $A:=K_n\sqcup (k-2)K_1$ and $B:=K_m$.
Observe that $\alpha(A)=k-1$ and $\alpha(B)=1$.
By \cref{prop:padding}, $\TSsym_{k-1}(A)\cong K_n$.
Since $\alpha(B)=1$, the $B$-factor in \cref{cor:product-saturation} is the graph induced by the singleton independent sets of~$B$, namely~$K_m$.

\item Take $A:=\overline{P_{n+1}}\sqcup (k-3)K_1$ and $B:=K_m$.
Observe that $\alpha(A)=k-1$ and $\alpha(B)=1$.
By \cref{thm:paths-cycles,prop:padding}, $\TSsym_{k-1}(A)\cong P_n$, and the same $B$-factor is $K_m$.

\item When $n=3$, the claim follows from \textup{(a)} because $C_3\cong K_3$.
When $n\ge 4$, take $A:=\overline{C_n}\sqcup (k-3)K_1$ and $B:=K_m$.
Observe that $\alpha(A)=k-1$ and $\alpha(B)=1$.
By \cref{thm:paths-cycles,prop:padding}, $\TSsym_{k-1}(A)\cong C_n$, and \cref{cor:product-saturation} gives $C_n\square K_m$.

\item If one of $m,n$ is equal to $3$, then the claim follows from \textup{(a)} or \textup{(c)}, since $C_3\cong K_3$.
It remains to consider $m,n\ge 4$.
Set $A:=\overline{C_m}$ and $B:=\overline{C_n}$.
Since $C_m$ and $C_n$ are triangle-free cycles, $\alpha(A)=\alpha(B)=2$.
By \cref{thm:paths-cycles}, $\TSsym_2(A)\cong C_m$ and $\TSsym_2(B)\cong C_n$.
Thus, \cref{cor:product-saturation} gives $\TSsym_4(A\sqcup B)\cong C_m\square C_n$.
For $k\ge 4$, add $k-4$ isolated vertices to $A\sqcup B$ and apply \cref{prop:padding}.
\end{enumerate}
\end{proof}

The same method gives positive cycle--path cases.

\begin{corollary}
\label{cor:cycle-path-positive}
Let $m\ge 3$, $n\ge 1$, and $k\ge 2$.
The graph $C_m\square P_n$ is a $\TSsym_k$-graph whenever one of the following holds:
\begin{enumerate}[label=\textup{(\alph*)}]
\item $k=2$ and either $n=1$ or $(m,n)=(3,2)$;
\item $k=3$ and either $m=3$ or $n\le 2$;
\item $k\ge 4$.
\end{enumerate}
\end{corollary}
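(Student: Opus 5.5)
The plan is a case-by-case reduction: each item follows from \cref{cor:product-families}, \cref{thm:paths-cycles}, and the full-token product formula \cref{cor:product-saturation} together with \cref{prop:padding}. Throughout I use $C_3\cong K_3$, $P_1\cong K_1$ and $P_2\cong K_2$ to pass between the cycle--path targets and the complete-graph products already treated.

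For \textup{(a)}, with $k=2$ and $n=1$, we have $C_m\square P_1\cong C_m$, which is a $\TSsym_2$-graph by \cref{thm:paths-cycles}(b). For $(m,n)=(3,2)$, the target is $K_3\square K_2$, which is a $\TSsym_2$-graph by \cref{cor:product-families}(a).

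For \textup{(b)}, with $k=3$, there are three subcases. If $m=3$, then $C_3\square P_n\cong P_n\square K_3$, covered by \cref{cor:product-families}(b). If $n=1$, the target is $C_m$, covered by \cref{thm:paths-cycles}(b). If $n=2$, the target is $C_m\square K_2$, covered by \cref{cor:product-families}(c).

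For \textup{(c)}, with $k\ge 4$, the case $m=3$ again reduces to $P_n\square K_3$ via \cref{cor:product-families}(b), which holds for all $k\ge 3$. For $m\ge 4$, set $A:=\overline{C_m}$ and $B:=\overline{P_{n+1}}$. Since $C_m$ and $P_{n+1}$ are triangle-free with at least one edge (here $n\ge 1$), the identity $\alpha(\overline{G})=\omega(G)$ gives $\alpha(A)=\alpha(B)=2$. By \cref{thm:paths-cycles}, $\TSsym_2(A)\cong C_m$ and $\TSsym_2(B)\cong P_n$. Then \cref{cor:product-saturation} yields $\TSsym_4(A\sqcup B)\cong C_m\square P_n$. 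For $k>4$, add $k-4$ isolated vertices and apply \cref{prop:padding}, using $\alpha(A\sqcup B)=4$.

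The only delicate point is the boundary value $n=1$ in \textup{(c)}. Here $B=\overline{P_2}=2K_1$, and $\TSsym_2(B)$ is the single vertex $\{$both vertices$\}$, i.e.\ $P_1$. This agrees with \cref{thm:paths-cycles}(a), which is stated for all $n\ge 1$, so the argument goes through. The remaining care is checking that each cited corollary's range of $k$ matches the case it is used in, which the reductions above do.
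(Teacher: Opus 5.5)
Your proof is correct and follows essentially the paper's route: product saturation plus padding, applied to complements of paths and cycles and to complete graphs. The differences are minor. You cite \cref{cor:product-families} and \cref{thm:paths-cycles}(b) as black boxes instead of rebuilding the hosts. In (c) you also treat all $m\ge 4$, $n\ge 1$ uniformly with $\overline{C_m}\sqcup\overline{P_{n+1}}$, and your $n=1$ boundary check is right, whereas the paper pads the $k=2$ and $k=3$ constructions when $n\le 2$. This is a harmless streamlining.
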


\begin{proof}
We prove the listed cases by reducing them to known cycle, complete-graph, and product cases.
\begin{enumerate}[label=\textup{(\alph*)},leftmargin=2em]
\item Suppose $k=2$.
If $n=1$, then $C_m\square P_1\cong C_m$.
For $m\ge 4$, use $\TSsym_2(\overline{C_m})\cong C_m$ from \cref{thm:paths-cycles}.
For $m=3$, use $C_3\cong K_3\cong \TSsym_2(K_3\sqcup K_1)$.
The remaining case is $(m,n)=(3,2)$.
Since every independent $2$-set of $K_3\sqcup K_2$ has one vertex in each component, a slide changes exactly one coordinate.
Thus $\TSsym_2(K_3\sqcup K_2)\cong K_3\square K_2\cong C_3\square P_2$.

\item Suppose $k=3$.
If $m=3$, then $C_m\cong K_3$.
Apply \cref{cor:product-saturation} to $A:=\overline{P_{n+1}}$ and $B:=K_3$.
Here $\alpha(A)=2$, $\alpha(B)=1$, $\TSsym_2(A)\cong P_n$ by \cref{thm:paths-cycles}, and $\TSsym_1(B)\cong K_3$.
Thus, $\TSsym_3(A\sqcup B)\cong P_n\square K_3\cong C_3\square P_n$.
Hence assume $m\ge 4$.
If $n=1$, then $C_m\square P_1\cong C_m$, and the $k=2$ constructions above can be padded by one isolated vertex.
If $n=2$, apply \cref{cor:product-saturation} to $A:=\overline{C_m}$ and $B:=K_2$.
Then $\alpha(A)=2$, $\alpha(B)=1$, $\TSsym_2(A)\cong C_m$, and $\TSsym_1(B)\cong K_2\cong P_2$.
So $\TSsym_3(A\sqcup B)\cong C_m\square P_2$.

\item Suppose $k\ge 4$.
If $n=1$, then $C_m\square P_1\cong C_m$, and the $k=2$ constructions in part~\textup{(a)} can be padded by $k-2$ isolated vertices.
Hence assume $n\ge 2$.
If $m=3$, use the construction from part~\textup{(b)} and pad by $k-3$ isolated vertices.
Hence assume $m\ge 4$.
If $n=2$, use the $n=2$ construction from part~\textup{(b)} and pad by $k-3$ isolated vertices.
It remains that $m\ge 4$ and $n\ge 3$.
Set $A:=\overline{C_m}$ and $B:=\overline{P_{n+1}}$.
Then $\alpha(A)=\alpha(B)=2$, $\TSsym_2(A)\cong C_m$, and $\TSsym_2(B)\cong P_n$.
Hence, \cref{cor:product-saturation} gives $\TSsym_4(A\sqcup B)\cong C_m\square P_n$.
Padding gives the result for every $k\ge 4$.
\end{enumerate}
\end{proof}

Some two-token product cases remain.
The next two lemmas rule out the prism cases and the first three triangular strips; the later obstruction for $C_4\square C_n$ uses induced $4$-cycle counts rather than layer interfaces.
\Cref{lem:cycle-path-prism-k2} forces a reduced complement root to be bipartite and claw-free, hence essentially a path or cycle.
\Cref{lem:cycle-path-triangular-strip-k2} tracks the possible interfaces between consecutive triangular layers.

\begin{lemma}
\label{lem:cycle-path-prism-k2}
For every $m\ge 4$, the graph $C_m\square P_2$ is not a $\TSsym_2$-graph.
\end{lemma}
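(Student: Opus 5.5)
We will argue by contradiction, working entirely with a complement root and using only the degree formula and claw-freeness. Suppose $C_m\square P_2\cong\TSsym_2(G)$ for some $m\ge 4$. The prism $C_m\square P_2$ is non-empty and connected, so \cref{lem:sizebound} lets us replace $G$ by a graph $G'$ with the same two-token graph. Its complement root $F:=\overline{G'}$ is connected, has no isolated vertices, and has exactly $2m$ edges. Under the identification of \cref{lem:line-avis}, the vertices of $\TSsym_2(\overline{F})$ are the edges of $F$.

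The first step uses the fact that $C_m\square P_2$ is $3$-regular. By \cref{prop:per-edge-degree}, every edge $ab\in E(F)$ satisfies
\[
\deg_F(a)+\deg_F(b)-2-2t_F(ab)=3,
\qquad\text{so}\qquad
\deg_F(a)+\deg_F(b)=5+2t_F(ab).
\]
In particular $\deg_F(a)+\deg_F(b)$ is odd for every edge. Hence every edge of $F$ joins a vertex of even degree to a vertex of odd degree. The partition of $V(F)$ by degree parity is therefore a bipartition, so $F$ is bipartite and in particular triangle-free. (Consequently $t_F(ab)=0$, and every edge has degree sum exactly $5$.)

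The second step uses triangle-freeness of the target. Since $m\ge 4$, the prism $C_m\square P_2$ has no triangles, so \cref{prop:claw-iff-triangle-free} shows that $F$ is claw-free. A triangle-free graph in which some vertex has three neighbors contains an induced claw, because those neighbors are pairwise non-adjacent. Hence $\Delta(F)\le 2$. This contradicts the degree-sum identity, since two endpoints of degree at most~$2$ give $\deg_F(a)+\deg_F(b)\le 4<5$.

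Alternatively, one can finish without the degree-sum identity. Since $F$ is triangle-free, \cref{lem:line-avis} gives $\TSsym_2(\overline{F})\cong L(F)$. As $F$ is connected with maximum degree at most~$2$, it is a path or a cycle, so $L(F)$ has maximum degree at most~$2$ and cannot be the $3$-regular prism. Either way we reach a contradiction, so $C_m\square P_2$ is not a $\TSsym_2$-graph.

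The main obstacle is the bipartiteness step: a naive approach would try to rule out triangles in $F$ by local case analysis. The parity observation coming from \cref{prop:per-edge-degree} under odd regularity avoids that analysis entirely, and the only point to check there is that the term $-2-2t_F(ab)$ is even, so it does not affect the parity.
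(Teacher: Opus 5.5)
Your proof is correct and follows the paper's argument essentially step for step: $3$-regularity together with \cref{prop:per-edge-degree} gives the parity bipartition, \cref{prop:claw-iff-triangle-free} gives claw-freeness, hence $\Delta(F)\le 2$, and your alternative finish via \cref{lem:line-avis} is exactly the paper's ending. Your primary finish, comparing $\deg_F(a)+\deg_F(b)=5+2t_F(ab)\ge 5$ with $\Delta(F)\le 2$, is a slightly shorter ending that needs neither connectivity of $F$ nor the line-graph identification.
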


\begin{proof}
We show non-realizability by moving to a claw-free complement root and using degree parity.
Suppose to the contrary that $C_m\square P_2\cong \TSsym_2(G)$.
The target graph is connected. By \cref{lem:sizebound}, there is a graph $G'$ such that $\TSsym_2(G')\cong \TSsym_2(G)$ and, putting $F:=\overline{G'}$, the graph $F$ is connected, has no isolated vertices, and $\TSsym_2(\overline{F})\cong C_m\square P_2$.

For $m\ge 4$, the graph $C_m\square P_2$ is triangle-free and $3$-regular.
Since $\TSsym_2(\overline{F})$ is triangle-free, \cref{prop:claw-iff-triangle-free} implies that $F$ is claw-free.

Let $ab\in E(F)$.
The corresponding vertex of $\TSsym_2(\overline{F})$ has degree $3$, so \cref{prop:per-edge-degree} gives $3=\deg_F(a)+\deg_F(b)-2-2|N_F(a)\cap N_F(b)|$.
Thus, $\deg_F(a)+\deg_F(b)$ is odd for every edge $ab\in E(F)$.
Hence, every edge of $F$ joins vertices of opposite degree parity, and degree parity gives a proper $2$-coloring of~$F$.
So $F$ is bipartite.

A bipartite claw-free graph has maximum degree at most $2$: if a vertex had three neighbors, those neighbors would be pairwise non-adjacent and would form an induced claw.
Since $F$ is connected and has no isolated vertices, $F$ is a path or a cycle.
Since $F$ is bipartite, it is triangle-free.
Therefore, \cref{lem:line-avis} gives $\TSsym_2(\overline{F})\cong L(F)$.
But the line graph of a path or a cycle has maximum degree at most $2$, whereas $C_m\square P_2$ is $3$-regular.
The contradiction proves the result.
\end{proof}

The second lemma treats the small triangular strips through the triangular layers of $K_3\square P_n$.
\Cref{fig:triangular-strip-interfaces} shows the two interface types used in the proof and the blockers forced by a same-center interface.

\begin{figure}[ht]
\centering
\begin{adjustbox}{max width=\textwidth}
\begin{tikzpicture}[
	rootv/.style={circle,draw,thick,fill=white,inner sep=1pt,minimum size=19pt,font=\small},
	center/.style={circle,draw,thick,fill=black!12,inner sep=1pt,minimum size=21pt,font=\small},
	rootedge/.style={thick},
	second/.style={thick,draw=black!65},
	third/.style={thick,draw=black!35},
	blocker/.style={thick,draw=black!55},
	forced/.style={thick,draw=black!70},
	panel/.style={font=\small\bfseries},
	note/.style={font=\small,align=center},
	label/.style={font=\scriptsize,inner sep=1pt,fill=white}
]
\node[panel] at (0,2.95) {(a) same-center interface};
\node[center] (c) at (0,0) {$c$};
\foreach \i/\y in {1/1.55,2/0,3/-1.55} {
	\node[rootv] (x\i) at (-2.65,\y) {$x_{\i}$};
	\node[rootv] (y\i) at (2.65,\y) {$y_{\i}$};
}
\draw[blocker] (x1) -- (y2);
\draw[blocker] (x1) -- (y3);
\draw[blocker] (x2) -- (y1);
\draw[blocker] (x2) -- (y3);
\draw[blocker] (x3) -- (y1);
\draw[blocker] (x3) -- (y2);
\foreach \i in {1,2,3} {
	\draw[rootedge] (c) -- (x\i);
	\draw[second] (c) -- (y\i);
}
\node[center] at (c) {$c$};
\node[note] at (0,-2.45) {forced blockers $x_i y_j$\\for all $i\ne j$};

\node[panel] at (7.25,2.95) {(b) leaf-sharing interface};
\foreach \i/\x in {1/5.65,2/7.25,3/8.85} {
	\node[rootv] (z\i) at (\x,-1.35) {$x_{\i}$};
}
\node[center] (cr) at (5.45,1.35) {$c_r$};
\node[center] (cs) at (7.25,1.35) {$c_{r+1}$};
\node[center] (ct) at (9.05,1.35) {$c_{r+2}$};
\foreach \i in {1,2,3} {
	\draw[rootedge] (cr) -- (z\i);
	\draw[second] (cs) -- (z\i);
	\draw[third] (ct) -- (z\i);
}
\draw[forced] (cr) to[bend left=26] node[label,above] {forced} (ct);
\node[note] at (7.25,-2.45) {the leaves propagate; three layers\\force the edge $c_r c_{r+2}$};
\end{tikzpicture}
\end{adjustbox}
\caption{Interfaces between consecutive triangular layers in \cref{lem:cycle-path-triangular-strip-k2}. Same-center interfaces force the blocker edges $x_i y_j$ with $i\ne j$; three leaf-sharing layers force $c_r c_{r+2}$.}
\label{fig:triangular-strip-interfaces}
\end{figure}

\begin{lemma}
\label{lem:cycle-path-triangular-strip-k2}
For $3\le n\le 5$, the graph $C_3\square P_n$ is not a $\TSsym_2$-graph.
\end{lemma}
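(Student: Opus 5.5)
Suppose, toward a contradiction, that $C_3\square P_n\cong\TSsym_2(G)$ for some $3\le n\le 5$. The target is connected, so \cref{lem:sizebound} lets us replace $G$ by $\overline{F}$. Here $F$ is connected, has no isolated vertices, and has exactly $|V(C_3\square P_n)|=3n$ edges. The plan is to recover from the triangles of the target a rigid decomposition of $E(F)$ into $n$ claws. We then show that the rung edges between consecutive triangles can be realized by only two interface types. Finally we check that, for $n\le 5$, every sequence of interface types forces some edge of $F$ that is not among the $3n$ claw edges.

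\emph{Step 1 (layers are claws).} By the proof of \cref{prop:claw-iff-triangle-free}, a triangle of $\TSsym_2(\overline{F})$ comes from three edges of $F$ at a common centre whose three other endpoints are pairwise non-adjacent, i.e.\ an induced claw. For $n\ge 2$, the triangles of $C_3\square P_n$ are exactly the $n$ layers $K_3\times\{r\}$, and these layers partition the $3n$ vertices. Hence $E(F)$ is partitioned into $n$ induced claws $Q_r$, each with centre $c_r$ and leaf set $X_r$. The crucial consequence is that \emph{every edge of $F$ is an edge of some $Q_r$}.

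\emph{Step 2 (interfaces).} Between layers $r$ and $r+1$ the target has a perfect matching and no other edges. Adjacent vertices of $\TSsym_2(\overline{F})$ are edges of $F$ that share an endpoint and whose other endpoints are non-adjacent. I would show there are only two possibilities.
\begin{itemize}
\item \emph{Same centre}, $c_{r+1}=c_r=:c$ with $X_{r+1}=\{y_1,y_2,y_3\}$ disjoint from $X_r=\{x_1,x_2,x_3\}$. The matching forces $x_iy_i\notin E(F)$, and the absence of other cross edges forces the blockers $x_iy_j\in E(F)$ for all $i\ne j$.
\item \emph{Leaf sharing}, $X_{r+1}=X_r$ with $c_{r+1}\ne c_r$ and $c_rc_{r+1}\notin E(F)$.
\end{itemize}
To rule out the mixed configurations, I would use two facts. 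First, three matched pairs must use three distinct shared endpoints, or all use the centre. Second, sharing a leaf for only some pairs would put a leaf of $Q_r$ in the role of $c_{r+1}$, or would create extra adjacencies. I would also use the degree values from \cref{prop:per-edge-degree}: degree $3$ on the end layers and degree $4$ in the interior layers. A further check is that a vertex of $F$ carrying two claws with disjoint edge sets yields extra shared-endpoint pairs whose non-edge status is dictated by Step~1.

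\emph{Step 3 (finite contradiction).} Each interface type forces structure that must be covered by claw edges.
\begin{itemize}
\item A same-centre interface forces six blocker edges $x_iy_j$. By Step~1 these must be claw edges, yet none of them contains $c$. They would have to be covered by other claws whose centres lie among the $x_i,y_j$, and these claws must fit the interface pattern with their neighbours.
\item Two consecutive leaf-sharing interfaces give centres $c_r,c_{r+1},c_{r+2}$ on a common leaf set. The edges $c_rx_1$ and $c_{r+2}x_1$ share $x_1$ and lie in non-consecutive layers, so they are non-adjacent in the target. This forces $c_rc_{r+2}\in E(F)$, and again this edge must be a claw edge.
\end{itemize}
For $n=3,4,5$ I would enumerate the at most $2^{n-1}$ interface sequences. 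In each one, I would show that the forced edges cannot all be claw edges without (i) exceeding $3n$ edges, or (ii) creating an edge of $\TSsym_2(\overline{F})$ between non-consecutive layers. Alternatively, (iii) they may violate the degree formula, for instance a blocker edge sitting in a layer forces $t_F$ values or degrees that are inconsistent with degree $3$ or $4$. I expect two parts to be the main obstacle: excluding the mixed interfaces cleanly in Step~2, and the case $n=5$ in Step~3, where blockers might be absorbed by later claws. There I would first try to show that a same-centre interface at $r$ forces the blockers to form claws only at layers $r\pm 2$ or farther. That should contradict the bounded length $n\le 5$ combined with the leaf-sharing restriction.
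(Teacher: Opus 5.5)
\textbf{Gap in Step 3.} Your skeleton is the paper's: layers are induced root claws, interfaces are same-centre or leaf-sharing, a same-centre interface forces six blockers, and leaf-sharing forces $c_rc_{r+2}$. However, the decisive step is missing. You never dispose of a same-centre interface, and the proposed enumeration of $2^{n-1}$ interface sequences with tests (i)--(iii) is not carried out.

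Counting slots alone cannot work for $n=5$. Take a same-centre interface between layers $1$ and $2$. Each blocker $x_iy_j$ is non-adjacent in $\TSsym_2(\overline{F})$ to all six vertices of layers $1,2$: its adjacency to $cx_i$ is blocked by $cy_j\in E(F)$, its adjacency to $cy_j$ is blocked by $cx_i\in E(F)$, and it is disjoint from the other four. So blockers cannot lie in layers $1,2$. They also cannot lie in layer $3$, since every vertex there has its matching neighbour in layer $2$. (This also corrects your ``layers $r\pm 2$ or farther'': for an interface between $r$ and $r+1$, layer $r+2$ is excluded too.) That leaves layers $4,5$, with exactly six slots for six blockers.

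The missing idea is structural. The blockers $x_iy_j$ ($i\ne j$) form $K_{3,3}$ minus a perfect matching, i.e.\ the $6$-cycle $x_1y_2x_3y_1x_2y_3x_1$. Hence no three blockers share an endpoint. Since each layer is a claw with a single centre, each layer holds at most two blockers. For $n\le 5$, at most two layers are at distance at least two from both interface layers. So at most four blockers fit, contradicting six. This excludes same-centre interfaces globally, with no enumeration.

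\textbf{Ordering for the leaf-sharing case.} As you wrote it (``this edge must be a claw edge''), the argument does not close in a mixed sequence. There, nothing prevents $c_rc_{r+2}$ from being an edge of some other claw. Once same-centre interfaces are excluded everywhere, the fix is immediate:
\begin{itemize}
\item the leaf sets propagate, so all $3n$ claw edges have the form $c_sx_i$ over one common leaf set $\{x_1,x_2,x_3\}$;
\item these $3n$ edges exhaust $E(F)$;
\item the centre--centre edge $c_rc_{r+2}$ is not among them, which is a contradiction.
\end{itemize}

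\textbf{Step 2 is only sketched.} Your exclusion of mixed interfaces needs an actual argument. Suppose $c_r=y_i$. Then $c_rc_{r+1}=c_{r+1}y_i$ is an edge of layer $r+1$. For $j\ne i$, the matched pair $c_rx_j,c_{r+1}y_j$ cannot share an endpoint:
\begin{itemize}
\item $c_r=y_j$ would give $y_j=y_i$;
\item $x_j=c_{r+1}$ would put the edge $c_rc_{r+1}$ in two layers;
\item $x_j=y_j$ leaves the non-shared endpoints $c_r,c_{r+1}$, which are adjacent in $F$, so the two vertices are not adjacent;
\item $c_r=c_{r+1}$ would make $c_{r+1}y_i$ a loop.
\end{itemize}
The case $x_i=c_{r+1}$ is symmetric.
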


\begin{proof}
We prove the exclusion by following how the three edges in each triangular layer can meet in the complement root.
Since $C_3=K_3$, suppose to the contrary that $K_3\square P_n\cong \TSsym_2(G)$.
The target is connected and has $3n$ vertices.
By \cref{lem:sizebound}, we may replace $G$ by a graph whose complement root $F$ satisfies $\TSsym_2(\overline{F})\cong K_3\square P_n$ and $|E(F)|=3n$.
Fix an isomorphism $\phi:\TSsym_2(\overline{F})\to K_3\square P_n$.
We identify the vertices of $\TSsym_2(\overline{F})$ with the edges of~$F$.
We shall use the following rule: if two edges $ab,ac\in E(F)$ share the endpoint $a$, then they are adjacent in $\TSsym_2(\overline{F})$ if and only if $bc\notin E(F)$; disjoint edges of $F$ are not adjacent.

Label the vertices of $K_3\square P_n$ as $(i,r)$, where $i\in\{1,2,3\}$ and $1\le r\le n$.
Let $L_r:=\{(1,r),(2,r),(3,r)\}$ and $\widehat L_r:=\phi^{-1}(L_r)\subseteq E(F)$.
The sets $\widehat L_1,\ldots,\widehat L_n$ partition $E(F)$.
Each $L_r$ is a triangle, so the three edges in $\widehat L_r$ are pairwise adjacent in $\TSsym_2(\overline{F})$.
Three pairwise incident edges in a simple graph form either a triangle or a star.
They cannot form a triangle in~$F$, since two edges of a root triangle have adjacent non-shared endpoints and hence are not adjacent in $\TSsym_2(\overline{F})$.
Thus, each $\widehat L_r$ is an induced \emph{root claw} $c_r x_{r,1}$, $c_r x_{r,2}$, $c_r x_{r,3}$,
where the leaves $x_{r,1},x_{r,2},x_{r,3}$ are pairwise non-adjacent in~$F$.

Consider two consecutive target layers.
Let their root claws be $cx_1,cx_2,cx_3$ and $dy_1,dy_2,dy_3$,
indexed so that the matching target adjacencies are $cx_i\sim dy_i$.
Each matched pair must share an endpoint.
For a matched pair $cx_i,dy_i$, the only possible endpoint sharings are $c=d$, $c=y_i$, $x_i=d$, or $x_i=y_i$.
A mixed center-leaf sharing is impossible.
Indeed, suppose $c=y_i$.
Then $d=c$ would make $dy_i$ a loop, so $d\ne c$, and $dc$ is an edge of the second claw.
For $j\ne i$, the matched pair $cx_j,dy_j$ must share an endpoint.
It cannot have $c=y_j$ or $x_j=d$, since either equality would repeat the edge $dc$.
It cannot have $x_j=y_j$, because then the non-shared endpoints $c,d$ are adjacent in~$F$, so the two root edges are not adjacent in $\TSsym_2(\overline{F})$.
The case $x_i=d$ is symmetric.
After mixed center-leaf sharing is excluded, only the possibilities $c=d$ and $x_i=y_i$ remain.
If $c=d$, then the two claws have the same center; if $c\ne d$, then every matched pair must share its leaf.
Hence, every adjacent pair of layers has one of two \emph{interface types}: either the two claws have the same center (\emph{same-center interface}), or their matched leaves are equal (\emph{leaf-sharing interface}).

\begin{itemize}
\item \textbf{Case 1: Some consecutive pair of layers has a same-center interface.}
Suppose two consecutive layers are represented by $cx_1,cx_2,cx_3$ and $cy_1,cy_2,cy_3$.
Since the two target layers are disjoint, the six leaves
$x_1,x_2,x_3,y_1,y_2,y_3$ are distinct.
For every $i\ne j$, the target vertices $cx_i$ and $cy_j$ are non-adjacent but share the endpoint~$c$.
By the rule above, $x_i y_j\in E(F)$.
Thus, a same-center interface forces the six distinct blocker edges $\{x_i y_j:\ i\ne j\}$.

No such blocker edge is adjacent in $\TSsym_2(\overline{F})$ to any vertex of the two interface layers.
Indeed, $x_i y_j$ can only meet the interface at $cx_i$ or $cy_j$.
Its possible adjacency to $cx_i$ is blocked by the edge $cy_j$, and its possible adjacency to $cy_j$ is blocked by the edge $cx_i$.
All other pairs are disjoint.
Therefore, a blocker cannot lie in either interface layer.
It also cannot lie in a target layer adjacent to one of them, since each vertex in such a layer has its matching target neighbor in the interface layer.
Since the target layers partition the root edges via the fixed isomorphism, all blocker edges must therefore be represented by vertices in the remaining far layers.

For $n\le 5$, relative to any fixed consecutive pair of layers, at most two target layers are neither the pair itself nor adjacent to either member of the pair.
A single target triangle layer contains at most two of the six blockers: the blockers form $K_{3,3}$ minus a perfect matching on
$\{x_1,x_2,x_3\}\cup\{y_1,y_2,y_3\}$,
so no three blocker edges have a common endpoint, whereas every target triangle layer is represented by a root claw.
Thus three blockers cannot all be represented in one target layer, because the three root edges in such a layer must share a single center.
Thus, the remaining far layers can contain at most four blockers.
This upper bound contradicts the six blockers forced above.
Therefore, no same-center interface occurs.

\item \textbf{Case 2: Every consecutive pair of layers has a leaf-sharing interface.}
Since Case~1 is impossible, every consecutive pair of layers has a leaf-sharing interface.
Consecutive target layers are joined by the matching edges $(i,r)(i,r+1)$, so the shared leaves propagate along the path.
After choosing the labels in the first layer, all layers have the form $c_r x_1,c_r x_2,c_r x_3$ for $1\le r\le n$.
The centers $c_r$ are distinct, and no center is one of the common leaves; otherwise some layer edge would be repeated or would be a loop.

Now, take three consecutive layers $r,r+1,r+2$.
For each $i$, the target vertices corresponding to $c_r x_i$ and $c_{r+2}x_i$ are non-adjacent but share the root endpoint $x_i$.
Hence, $c_r c_{r+2}\in E(F)$.
The edge joins two centers, so it is not one of the layer-claw edges $c_sx_i$.
But the layer-claw edges are $3n$ distinct edges of $F$, and $|E(F)|=3n$.
They therefore exhaust $E(F)$, so the edge $c_r c_{r+2}$ cannot exist.
\end{itemize}
The contradiction proves the lemma.
\end{proof}

Together, the two lemmas give the following exclusions.

\begin{proposition}
\label{prop:cycle-path-k2-structural}
Let $m\ge 3$ and $n\ge 1$.
If either $m\ge 4$ and $n=2$, or $m=3$ and $3\le n\le 5$, then $C_m\square P_n$ is not a $\TSsym_2$-graph.
\end{proposition}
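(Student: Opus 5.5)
This proposition is a direct assembly of the two lemmas just proved, so the plan is a two-case split on the hypothesis, with no new argument needed.

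\emph{Case $m\ge 4$ and $n=2$.} Here the target $C_m\square P_n$ is exactly the prism $C_m\square P_2$. \Cref{lem:cycle-path-prism-k2} asserts that this graph is not a $\TSsym_2$-graph for every $m\ge 4$.

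\emph{Case $m=3$ and $3\le n\le 5$.} Here $C_m\square P_n=C_3\square P_n$, which is the triangular strip treated in \cref{lem:cycle-path-triangular-strip-k2}. That lemma gives non-realizability at two tokens for precisely this range of~$n$.

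The two hypotheses of the proposition are covered by these two cases, and in each case the conclusion is the conclusion of the cited lemma. There is no genuine obstacle in this step. The only thing to check is that the two hypothesis clauses match the parameter ranges of the lemmas exactly. They do: the prism lemma requires $m\ge 4$ with $n=2$, and the strip lemma requires $m=3$ with $3\le n\le 5$. The substantive work, namely the degree-parity bipartiteness argument and the interface analysis of consecutive layers, is already contained in the lemmas.
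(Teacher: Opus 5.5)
Your proposal is correct and matches the paper's proof: a two-case split that cites \cref{lem:cycle-path-prism-k2} for $m\ge 4$, $n=2$ and \cref{lem:cycle-path-triangular-strip-k2} for $m=3$, $3\le n\le 5$. Both parameter ranges line up exactly with the lemmas' hypotheses.
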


\begin{proof}
We prove the proposition by using the two preceding obstruction lemmas.
\begin{itemize}
\item If $m\ge 4$ and $n=2$, the result is \cref{lem:cycle-path-prism-k2}.

\item If $m=3$ and $3\le n\le 5$, the result is \cref{lem:cycle-path-triangular-strip-k2}.
\end{itemize}
\end{proof}

\begin{remark}
\label{rem:cycle-path-frontier}
Combining \cref{cor:cycle-path-positive,prop:cycle-path-k2-structural}, the unsettled two-token cycle--path cases are $(3,n)$ with $n\ge 6$, and $(m,n)$ with $m\ge 4$ and $n\ge 3$.
Among these, $C_4\square P_3$ has the fewest vertices.
At three tokens, the remaining open cycle--path cases are $m\ge 4$ and $n\ge 3$.
\end{remark}

Corollary~\ref{cor:product-families}\textup{(d)} realizes every cycle product $C_m\square C_n$ with $k\ge 4$.
This positive result does not extend to $k=2$: already the products $C_4\square C_n$ are obstructed.

We use the following notation.
If $F$ is a graph and $H=\TSsym_2(\overline{F})$, then the vertices of $H$ are the edges of $F$.
Two edges $xy,xz\in E(F)$ are adjacent in $H$ exactly when $yz\notin E(F)$.
When $X,Y\subseteq V(F)$ are disjoint, let $m_F(X,Y)$ denote the number of edges of $F$ with one endpoint in $X$ and one endpoint in~$Y$.
We also use a simple consequence of triangle-freeness.
If $H=\TSsym_2(\overline{F})$ is triangle-free, then $\alpha(F[N_F(v)])\le 2$ for every $v\in V(F)$.
Indeed, if $x,y,z\in N_F(v)$ were pairwise non-adjacent in $F$, then the three edges $vx,vy,vz$ would be pairwise adjacent in~$H$.

The proof of \cref{thm:c4-cycle-products-not-ts2} has three steps.
First, \cref{lem:ts2-local-c4-count} expresses the number of induced $4$-cycles through a vertex $ab$ of $\TSsym_2(\overline{F})$ using the local sets $A$, $B$, and $C$ around the edge $ab$ in the complement root.
Second, \cref{lem:ts2-four-regular-c4-cap} shows that triangle-free $4$-regular two-token graphs have at most four such cycles through each vertex.
Third, \cref{lem:cycle-product-local-c4-count} shows that the target $C_4\square C_n$ has at least five.

Induced $4$-cycles in $H$ can be counted from the local data around the corresponding edge of~$F$.

\begin{lemma}
\label{lem:ts2-local-c4-count}
Let $F$ be a graph, let $H:=\TSsym_2(\overline{F})$, and suppose that $H$ is triangle-free.
Fix an edge $ab\in E(F)$, viewed as a vertex of $H$, and put $A:=N_F(a)\setminus N_F[b]$, $B:=N_F(b)\setminus N_F[a]$, and $C:=N_F(a)\cap N_F(b)$.
Let $\mathcal C_H(ab)$ be the set of induced $4$-cycles of $H$ that contain the vertex~$ab$.
Then
\[
|\mathcal C_H(ab)|
=m_F(A,B)
+\sum_{t\in C}\binom{|A\setminus N_F(t)|}{2}
+\sum_{t\in C}\binom{|B\setminus N_F(t)|}{2}.
\]
\end{lemma}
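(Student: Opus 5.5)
The plan is to count induced $4$-cycles through $ab$ purely inside $H$, and then translate each ingredient into root data using the adjacency rule: two edges $xy,xz\in E(F)$ are adjacent in $H$ exactly when $yz\notin E(F)$, and disjoint root edges are never adjacent.

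\textbf{Reduction inside $H$.} Since $H$ is triangle-free, every $4$-cycle of $H$ is induced, because a chord would create a triangle. A $4$-cycle through $ab$ has the form $ab,P,Q,R$, where $P,R\in N_H(ab)$ and $Q\ne ab$ is a common neighbour of $P$ and $R$. Triangle-freeness also gives the remaining conditions for free: $P$ and $R$ are automatically non-adjacent, and $Q$ is automatically non-adjacent to $ab$. Moreover, the cycle is determined by the unordered pair $\{P,R\}$ together with $Q$. Hence
\[
|\mathcal C_H(ab)|=\sum_{\{P,R\}\subseteq N_H(ab)}\bigl(|N_H(P)\cap N_H(R)|-1\bigr).
\]
By the proof of \cref{prop:per-edge-degree}, $N_H(ab)=\{ax:x\in A\}\cup\{by:y\in B\}$. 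I would split the pairs $\{P,R\}$ into three types: mixed pairs, pairs inside the $A$-side, and pairs inside the $B$-side.

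\textbf{Mixed pairs $P=ax$, $R=by$ with $x\in A$, $y\in B$.} The vertex $Q$ must be a root edge meeting both $\{a,x\}$ and $\{b,y\}$ and different from $ab$, so $Q\in\{xb,ay,xy\}$. The edges $xb$ and $ay$ do not exist, since $x\notin N_F(b)$ and $y\notin N_F(a)$. The edge $xy$, when present, is adjacent to both $ax$ and $by$. This holds because its non-shared endpoint pairs are $\{a,y\}$ and $\{x,b\}$, both of which are non-edges; note also $x\ne y$ because $A\cap B=\emptyset$. So each mixed pair contributes exactly $[xy\in E(F)]$, and the mixed pairs together give $m_F(A,B)$.

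\textbf{Same-side pairs $P=ax$, $R=ax'$ with $x,x'\in A$.} I would use two observations from triangle-freeness. First, $ax$ and $ax'$ are both neighbours of $ab$, so they are non-adjacent in $H$, which forces $xx'\in E(F)$; in particular, $xx'$ is never adjacent to $ax$ or $ax'$. Second, $Q$ must therefore contain $a$, so $Q=at$ with $t\in N_F(a)\setminus\{b,x,x'\}$ and $xt,x't\notin E(F)$. If $t\in A$, then $ab$, $ax$, $at$ would be pairwise adjacent and form a triangle, so $t\in C$. Conversely, every $t\in C\setminus(N_F(x)\cup N_F(x'))$ gives a valid $Q=at$. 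So the pair $\{x,x'\}$ contributes the number of $t\in C$ with $x,x'\notin N_F(t)$. Double counting over pairs then yields $\sum_{t\in C}\binom{|A\setminus N_F(t)|}{2}$, and the $B$-side is symmetric.

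\textbf{Expected obstacle.} The main delicate step is the same-side case. There one must be careful both that the candidate $Q=xx'$ is excluded and that candidates $t\in A$ are excluded, since both exclusions rest on triangle-freeness. One must also check that no $4$-cycle is counted twice across the three types, which holds because each cycle determines its pair $\{P,R\}$ uniquely.
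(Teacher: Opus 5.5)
Your proposal is correct and follows essentially the same route as the paper. Both split on whether the two cycle-neighbours of $ab$ lie on opposite sides (forcing the fourth vertex $xy$) or on the same side (forcing $xx'\in E(F)$ and a fourth vertex $at$ with $t\in C$), then double count; your observation that triangle-freeness makes every $4$-cycle induced, so you can just sum common neighbours over pairs in $N_H(ab)$, neatly packages the inducedness checks the paper does case by case.
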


\begin{proof}
We prove the formula by first listing the two neighbors of~$ab$ that can appear in an induced four-cycle.
When those neighbors lie on opposite sides of the edge, they determine the fourth vertex.
When they lie on the same side, the fourth vertex is supplied by a common witness in~$C$.

The neighbors of $ab$ in $H$ are exactly $\{au:u\in A\}\cup\{bv:v\in B\}$.
Since $H$ is triangle-free, both $A$ and $B$ are cliques in $F$: if, for example, distinct $u,v\in A$ were non-adjacent in $F$, then $ab,au,av$ would form a triangle in~$H$.

Let $Q$ be an induced $4$-cycle of $H$ containing~$ab$.
The two neighbors of $ab$ on $Q$ are either one vertex of the form $au$ and one of the form $bv$, or two vertices on the same side.

\begin{itemize}
\item If the two neighbors lie on opposite sides, say $au$ and $bv$, then the fourth vertex of $Q$ must be~$uv$.
Indeed, a common neighbor of $au$ and $bv$ must share one endpoint with each of these two root edges.
The candidate $ab$ is the old vertex, while $av$ and $bu$ are not edges of~$F$ by the definitions of $A$ and~$B$.
Thus only $uv$ can supply the fourth vertex.
The set $uv$ is a vertex of $H$ exactly when $uv\in E(F)$, and then $ab,au,uv,bv$ is an induced $4$-cycle of~$H$.
These cycles are counted by $m_F(A,B)$.

\item Suppose the two neighbors of $ab$ on $Q$ lie on the same side, say $au$ and $av$ with $u,v\in A$.
Since $A$ is a clique, $au$ and $av$ are not adjacent in~$H$.
A common neighbor of $au$ and $av$ must either be $uv$ or have the form~$at$.
The vertex $uv$ is not adjacent to either $au$ or $av$, because $av,au\in E(F)$.
Hence, the fourth vertex has the form $at$, where $t\in C$.
The condition $t\in C$ makes $at$ non-adjacent to $ab$.
The conditions $tu,tv\notin E(F)$ give the two token-sliding adjacencies $at\sim au$ and $at\sim av$.
Thus, the same-side cycles on the $a$-side are counted by $\displaystyle \sum_{t\in C}\binom{|A\setminus N_F(t)|}{2}$.
The $b$-side is symmetric.
\end{itemize}
These three families are disjoint and exhaust all induced $4$-cycles through~$ab$, proving the formula.
\end{proof}

The local formula gives a common upper bound when the two-token graph is triangle-free and $4$-regular.

\begin{lemma}
\label{lem:ts2-four-regular-c4-cap}
Let $F$ be a graph and let $H:=\TSsym_2(\overline{F})$.
If $H$ is triangle-free and $4$-regular, then every vertex of $H$ lies on at most four induced $4$-cycles.
\end{lemma}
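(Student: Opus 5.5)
The plan is to combine the local count of \cref{lem:ts2-local-c4-count} with the degree formula of \cref{prop:per-edge-degree}, applied to $ab$ and to several neighbouring root edges, and then finish with a short case analysis. Fix $ab\in E(F)$ and let $A,B,C$ be as in \cref{lem:ts2-local-c4-count}. Write $p:=|A|$ and $q:=|B|$. The neighbours of $ab$ in $H$ are exactly the edges $au$ ($u\in A$) and $bv$ ($v\in B$), so $4$-regularity gives $p+q=4$. Triangle-freeness makes $A$ and $B$ cliques of $F$, as in the proof of \cref{lem:ts2-local-c4-count}.

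The first term of the formula is easy to handle. We have $m_F(A,B)\le pq$, and each opposite-side pair $\{au,bv\}$ of neighbours of $ab$ yields at most one induced $4$-cycle, since the fourth vertex is forced to be $uv$. For the same-side terms, rewrite $\sum_{t\in C}\binom{|A\setminus N_F(t)|}{2}$ by double counting. It equals the number of pairs $(\{u,v\},t)$ with $u,v\in A$ distinct and $t\in C$ adjacent to neither $u$ nor $v$. The $B$-side is treated symmetrically.

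Next we constrain these witnesses using $4$-regularity at other vertices of $H$.
\begin{itemize}
\item For $u\in A$, the $a$-side $H$-neighbours of $au$ are the vertices of $N_F(a)\setminus N_F[u]$. Because $A$ is a clique, this set is $\{b\}\cup(C\setminus N_F(u))$. Hence $u$ has at most $3$ non-neighbours in $C$, and more precisely $1+|C\setminus N_F(u)|+|N_F(u)\setminus N_F[a]|=4$.
\item For $t\in C$, the $H$-neighbours of $at$ are the vertices of $(A\setminus N_F(t))\cup(C\setminus N_F[t])$ together with those of $N_F(t)\setminus N_F[a]$. Note that $b$ is excluded because $bt\in E(F)$. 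This gives $|A\setminus N_F(t)|\le 4$, with equality only in degenerate configurations.
\item We also use $\alpha(F[N_F(v)])\le 2$ at $v=a$, $v=b$ and at the witnesses $t$. For example, if two witnesses $t,t'\in C$ are non-adjacent, then $\{t,t',u\}$ must not be independent in $N_F(a)$ for any $u\in A$ that misses both of them. This restricts how the witness sets can overlap.
\end{itemize}

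With these constraints we split into the cases $(p,q)\in\{(4,0),(3,1),(2,2)\}$, the remaining cases being symmetric.
\begin{itemize}
\item When $(p,q)=(2,2)$, each side has a single pair. Here $m_F(A,B)\le 4$, and I expect to show that a same-side witness $t$ for $\{u,u'\}\subseteq A$ forces enough non-edges between $A$ and $B$ to keep the total at most $4$. The tool is the degree equation for $au$, which absorbs $t$ as an extra $a$-side neighbour.
\item When $(p,q)=(3,1)$, we have $m_F(A,B)\le 3$. The $A$-side has three pairs, and the degree equation for $bv$ should limit how many of them can have witnesses.
\item When $(p,q)=(4,0)$, only same-side cycles occur. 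Here the bound $|C\setminus N_F(u)|\le 3$ for each $u\in A$, together with the degree equations for the edges $at$, should cap the double-counted sum at $4$.
\end{itemize}

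The main obstacle is this final bookkeeping. It requires bounding the same-side sums jointly with $m_F(A,B)$, showing that every extra witness $t$ uses up degree budget that would otherwise produce opposite-side cycles. My first attempt would be to sum the degree equations of the four neighbours $au$ and $bv$ of $ab$. Each induced $4$-cycle through $ab$ uses two of these neighbours, and each neighbour has only $3$ further $H$-neighbours besides $ab$. Counting these cycles through the neighbours' remaining degree might give the bound $4$ directly, with triangle-freeness preventing double use. If that fails, I would fall back on the explicit case analysis above.
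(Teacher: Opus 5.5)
Your setup matches the paper's: the local count of \cref{lem:ts2-local-c4-count}, $|A|+|B|=4$, $A$ and $B$ cliques, and a case split on $(|A|,|B|)$. However, every case is left as an expectation, and the constraints you list cannot deliver the bound.

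\textbf{The missing identity.} For $t\in C$, compare the degree equations of the two root edges $at$ and $bt$. Two facts make this work: $\deg_F(a)-\deg_F(b)=|A|-|B|$, and the sets $N_F(a)\cap N_F(t)$ and $N_F(b)\cap N_F(t)$ differ only in $A\cap N_F(t)$ versus $B\cap N_F(t)$ (together with $b$ versus $a$). Subtracting the two equations gives
\[
|A\setminus N_F(t)|-|B\setminus N_F(t)|=|A|-2 .
\]
This identity is what controls the same-side terms:
\begin{itemize}
\item For $(3,1)$, each summand becomes $|B\setminus N_F(t)|$. The degree equation at $bv$ then gives $m_F(A,B)+\sum_{t\in C}|B\setminus N_F(t)|\le 3$; your instinct to use $bv$ there is right.
\item For $(2,2)$, it gives $Q=m_F(A,B)+2h$, where $h$ counts the $t\in C$ anticomplete to $A\cup B$.
\item For $(4,0)$, it gives $Q=|C|$.
\end{itemize}
Without the identity, your bullet on $at$ yields only the vacuous bound $|A\setminus N_F(t)|\le 4$.

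\textbf{Further gaps.} Even with this identity, your tools fall short in two cases.
\begin{itemize}
\item In $(2,2)$, the equations at $au$ and $bv$ do not by themselves exclude $h\ge 2$. The paper uses the degree equation of the edge $u_1u_2$ inside $A$. This yields $q_A+g_A+h=1$, where $q_A$ is the number of vertices of $B$ adjacent to both $u_1,u_2$ and $g_A$ the number of their common neighbours outside $\{a,b\}\cup A\cup B\cup C$. Hence $h\le 1$, and $h=1$ forces $m_F(A,B)\le 2$.
\item In $(4,0)$, summing $|C\setminus N_F(u)|\le 3$ over $u\in A$ gives only $2|C|\le 12$, i.e.\ $Q\le 6$. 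That is useless, since the lemma exists precisely to exclude the counts $5$ and $6$ of \cref{lem:cycle-product-local-c4-count}. Reaching $|C|\le 4$ is the bulk of the paper's proof. Put $r_i:=|N_F(u_i)\cap C|$, and let $\lambda_{ij}$ and $g_{ij}$ count the common neighbours of $u_i,u_j$ in $C$ and outside. Second-layer equations for the edges $u_iu_j$ give $\lambda_{ij}+g_{ij}=1-|C|+r_i+r_j$ and $\sum_{i<j}g_{ij}=6-|C|$. Then $|C|=6$ is excluded by showing $F[C]$ is a triangular prism in which every edge would have to lie in a triangle. The case $|C|=5$ is excluded by a degree count at the unique outside vertex adjacent to two vertices of $A$.
\end{itemize}

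\textbf{The fallback cannot work.} Your preferred first attempt counts cycles through the remaining degree of the four neighbours of $ab$. That uses only that $H$ is triangle-free and $4$-regular, and no argument of that kind can succeed. For example, $C_4\square C_4$ has six, and $K_{4,4}$ has eighteen, (necessarily induced) $4$-cycles through every vertex. The complement-root structure has to enter essentially.
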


\begin{proof}
We prove the bound by applying the local four-cycle count with the degree limits from $4$-regularity.
Once the two sides of the edge are denoted by $A$ and $B$, only a few sizes of $A$ and $B$ are possible, and each gives a bound of four.
The cases are organized by the sizes $x=|A|$ and $y=|B|$.
The $(3,1)$ and $(1,3)$ cases are immediate from the side-sum bound.
The one-sided $(4,0)$ and $(0,4)$ cases contain the main counting work, where the possibilities $|C|=6$ and $|C|=5$ are excluded.
The balanced $(2,2)$ case is controlled by a final common-neighbor count.

Fix a vertex $ab\in V(H)$, equivalently an edge $ab\in E(F)$, and use the notation $A,B,C$ from \cref{lem:ts2-local-c4-count}.
Since $H$ is $4$-regular, $|A|+|B|=4$.
For $t\in C$, set $x:=|A|$, $y:=|B|$, $p_A(t):=|A\setminus N_F(t)|$, and $p_B(t):=|B\setminus N_F(t)|$.

The following local degree identity, which is \cref{prop:per-edge-degree} in the present notation, will be used repeatedly.
For every edge $rs\in E(F)$,
\begin{equation}
\label{eq:product-local-edge-degree}
\deg_H(rs)=\deg_F(r)+\deg_F(s)-2-2|N_F(r)\cap N_F(s)|.
\end{equation}
Indeed, the neighbors of $rs$ in $H$ are the edges $rw$ with $w\in N_F(r)\setminus N_F[s]$, together with the edges $sw$ with $w\in N_F(s)\setminus N_F[r]$.

For $t\in C$, subtracting \eqref{eq:product-local-edge-degree} for the two edges $at$ and $bt$ gives
\begin{equation}
\label{eq:product-pa-pb}
p_A(t)-p_B(t)=x-2.
\end{equation}
Here $\deg_F(a)=1+x+|C|$ and $\deg_F(b)=1+y+|C|$.
The common neighbors of $a,t$ are $b$, the vertices of $A\cap N_F(t)$, and the vertices of $C\cap N_F(t)$.
The common neighbors of $b,t$ are $a$, the vertices of $B\cap N_F(t)$, and the vertices of $C\cap N_F(t)$.

For each $u\in A$, the vertex $au$ of $H$ is adjacent to $ab$, to each $at$ with $t\in C\setminus N_F(u)$, and to each $uv$ with $v\in B\cap N_F(u)$.
These are distinct neighbors, and $H$ is $4$-regular; hence $|C\setminus N_F(u)|+|B\cap N_F(u)|\le 3$.
Summing over $u\in A$ yields $\displaystyle m_F(A,B)+\sum_{t\in C}p_A(t)\le 3x$.
Symmetrically,
\begin{equation}
\label{eq:product-b-side-sum}
m_F(A,B)+\sum_{t\in C}p_B(t)\le 3y.
\end{equation}

By \cref{lem:ts2-local-c4-count}, the number of induced $4$-cycles of $H$ through $ab$ is
\begin{equation}
\label{eq:product-q-count}
Q:=m_F(A,B)+\sum_{t\in C}\binom{p_A(t)}2
+\sum_{t\in C}\binom{p_B(t)}2.
\end{equation}
We prove $Q\le 4$ by cases on $(x,y)$.

\begin{itemize}
\item \textbf{Case 1: $(x,y)\in\{(3,1),(1,3)\}$.}
By symmetry, it suffices to treat $(x,y)=(3,1)$.
In this case, \eqref{eq:product-pa-pb} gives $(p_A(t),p_B(t))=(1,0)$ or $(2,1)$.
Hence, the summand $\displaystyle \binom{p_A(t)}2+\binom{p_B(t)}2$ is $p_B(t)$, and \eqref{eq:product-b-side-sum} gives $Q\le 3$.

\item \textbf{Case 2: $(x,y)\in\{(4,0),(0,4)\}$.}
By symmetry, it suffices to treat $(x,y)=(4,0)$.
Then \eqref{eq:product-pa-pb} gives $p_A(t)=2$ for every $t\in C$, so $Q=|C|$.
Let $A=\{u_1,u_2,u_3,u_4\}$, set $c:=|C|$, and set $r_i:=|N_F(u_i)\cap C|$.
Since every vertex of $C$ is adjacent to exactly two vertices of $A$,
\begin{equation}
\label{eq:product-one-sided-rsum}
\sum_{i=1}^4 r_i=2c.
\end{equation}
Applying \eqref{eq:product-local-edge-degree} to $au_i$ gives
\begin{equation}
\label{eq:product-one-sided-deg}
\deg_F(u_i)=7-c+2r_i.
\end{equation}
For $i<j$, let $\lambda_{ij}$ be the number of vertices of $C$ adjacent to both $u_i$ and $u_j$, and let $g_{ij}$ be the number of common neighbors of $u_i,u_j$ outside $\{a,b\}\cup A\cup C$.
The common neighbors of $u_i$ and $u_j$ are $a$, the other two vertices of $A$, the $\lambda_{ij}$ vertices of $C$ adjacent to both, and the $g_{ij}$ common neighbors outside $\{a,b\}\cup A\cup C$.
Thus, \eqref{eq:product-local-edge-degree}, together with \eqref{eq:product-one-sided-deg}, gives
\begin{equation}
\label{eq:product-one-sided-lambda}
\lambda_{ij}+g_{ij}=1-c+r_i+r_j.
\end{equation}
Moreover, each $t\in C$ contributes to exactly one $\lambda_{ij}$, so $\displaystyle \sum_{i<j}\lambda_{ij}=c$.
Summing \eqref{eq:product-one-sided-lambda} over all six pairs and using \eqref{eq:product-one-sided-rsum}, we get $\displaystyle \sum_{i<j}g_{ij}=6-c$.
Thus, $c\le 6$.

We exclude $c=6$ and $c=5$.
If $c=6$, then every $g_{ij}=0$.
From \eqref{eq:product-one-sided-lambda}, we have $\lambda_{ij}=r_i+r_j-5$ for every pair $i<j$.
Using $\displaystyle \sum_i r_i=2c=12$, the identity $\displaystyle r_i=\sum_{j\ne i}\lambda_{ij}$ gives
\[
	r_i=\sum_{j\ne i}(r_i+r_j-5)=3r_i+(12-r_i)-15=2r_i-3.
\]
Hence $r_i=3$ for all $i$, and then $\lambda_{ij}=1$ for every pair~$i<j$.
Equation \eqref{eq:product-one-sided-deg} gives $\deg_F(u_i)=7$, so each $u_i$ has no neighbor outside $\{a\}\cup A\cup C$.

Let $t\in C$, and suppose that $t$ has a neighbor $z$ outside $\{a,b\}\cup A\cup C$.
Since $B=\emptyset$ and $z\notin C$, the outside vertex $z$ is adjacent to neither $a$ nor~$b$.
If $t$ is adjacent to $u_i,u_j\in A$, then the triples $\{b,u_i,z\}$ and $\{b,u_j,z\}$ in $N_F(t)$ force $zu_i,zu_j\in E(F)$; otherwise those triples would give triangles in~$H$.
The adjacency contradicts $g_{ij}=0$.
Hence, no vertex of $C$ has a neighbor outside $\{a,b\}\cup A\cup C$.

Now, each $t\in C$ has neighbors $a,b$, exactly two vertices of $A$, and its remaining neighbors in~$C$.
Applying \eqref{eq:product-local-edge-degree} to $bt$ shows that every $t\in C$ has exactly three neighbors in~$C$.
Also $\alpha(F[C])\le 2$, since an independent triple in $C$ would give a triangle of $H$ on three edges incident with~$b$.
Hence, $\overline{F[C]}$ is triangle-free and $2$-regular on six vertices, so $\overline{F[C]}\cong C_6$.
Thus, $F[C]$ is the triangular prism.

For any edge $tt'\in E(F[C])$, \eqref{eq:product-local-edge-degree} gives $|N_F(t)\cap N_F(t')|=4$.
The vertices $t,t'$ have no outside common neighbor, always share $a,b$, and share at most one vertex of $A$, so they must have a common neighbor in~$C$.
Thus, every edge of $F[C]$ lies in a triangle of~$F[C]$.
Such a triangle containment is impossible in the triangular prism, whose three matching edges lie in no triangle.

If $c=5$, then $\displaystyle \sum_{i<j}g_{ij}=1$.
By \eqref{eq:product-one-sided-deg}, the number of neighbors of $u_i$ outside $\{a,b\}\cup A\cup C$ is $r_i-2$, and hence the total number of incidences from $A$ to outside vertices is $\displaystyle \sum_i(r_i-2)=2c-8=2$.
An outside vertex adjacent to $s$ vertices of $A$ contributes $s$ to this incidence count and $\displaystyle \binom{s}{2}$ to $\displaystyle \sum g_{ij}$.
The total incidence count is $2$, so either one outside vertex is adjacent to two vertices of $A$, or two outside vertices are each adjacent to one vertex of~$A$.
The latter possibility contributes $0$ to $\displaystyle \sum g_{ij}$, contradicting $\displaystyle \sum g_{ij}=1$.
Hence, there is a unique outside vertex $z$ adjacent to exactly two vertices of~$A$.
Say $z$ is adjacent to $u_1,u_2$.
Then $r_1=r_2=3$ and $r_3=r_4=2$,
and $g_{12}=1$ while all other $g_{ij}$ are zero.
Substituting these values in \eqref{eq:product-one-sided-lambda} gives $\lambda_{12}=\lambda_{13}=\lambda_{14}=\lambda_{23}=\lambda_{24}=1$ and $\lambda_{34}=0$.
Let $t_{ij}$ be the unique vertex of $C$ adjacent to $u_i,u_j$, and put $S_1:=\{t_{12},t_{13},t_{14}\}$ and $S_2:=\{t_{12},t_{23},t_{24}\}$.
Let $s_i:=|N_F(z)\cap S_i|$.
No vertex outside $\{a,b\}\cup A\cup C$ is adjacent to $a$ or~$b$, and $u_1$ has no outside neighbor other than~$z$.
Therefore, the common neighbors of $u_1$ and $z$ are precisely $u_2$ together with $N_F(z)\cap S_1$.
Applying \eqref{eq:product-local-edge-degree} to $u_1z$ gives $\deg_F(z)=2s_1$.
Similarly, applying it to $u_2z$ gives $\deg_F(z)=2s_2$.
Hence, $s_1=s_2=:s$.
Since $S_1\cap S_2=\{t_{12}\}$, the vertex $z$ has at least $2s-1$ neighbors in $C$, and therefore $\deg_F(z)\ge 2+(2s-1)=2s+1$,
contradicting $\deg_F(z)=2s$.
Therefore, $c\le 4$, and so $Q=|C|\le 4$.

\item \textbf{Case 3: $(x,y)=(2,2)$.}
Let $A=\{u_1,u_2\}$ and $B=\{v_1,v_2\}$.
By \eqref{eq:product-pa-pb}, each $t\in C$ has $p_A(t)=p_B(t)$.
Let $h$ be the number of vertices $t\in C$ that are anticomplete to $A\cup B$; these are precisely the vertices with $p_A(t)=p_B(t)=2$.
Then \eqref{eq:product-q-count} becomes
\begin{equation}
\label{eq:product-balanced-q}
Q=m_F(A,B)+2h.
\end{equation}
Let $\kappa,\mu,h$ denote respectively the numbers of vertices $t\in C$ with $p_A(t)=p_B(t)=0,1,2$.
Put $r_i:=|N_F(u_i)\cap C|$.
Then
\begin{equation}
\label{eq:product-balanced-rsum}
r_1+r_2=2\kappa+\mu.
\end{equation}
The edge equation for $au_i$ gives
\begin{equation}
\label{eq:product-balanced-deg}
\deg_F(u_i)=5-|C|+2r_i.
\end{equation}
Let $q_A$ be the number of vertices of $B$ adjacent to both $u_1$ and $u_2$, and let $g_A$ be the number of common neighbors of $u_1,u_2$ outside $\{a,b\}\cup A\cup B\cup C$.
The common neighbors of $u_1,u_2$ are $a$, the $q_A$ vertices of $B$, the $\kappa$ vertices of $C$ adjacent to both $u_1,u_2$, and the $g_A$ outside vertices.
Applying \eqref{eq:product-local-edge-degree} to $u_1u_2$, using \eqref{eq:product-balanced-rsum}, \eqref{eq:product-balanced-deg}, and $|C|=\kappa+\mu+h$, gives $q_A+g_A+h=1$.
Thus, $h\le 1$.
If $h=0$, then $Q=m_F(A,B)\le 4$.
If $h=1$, then $q_A=0$, so no vertex of $B$ is adjacent to both vertices of~$A$.
It follows that $m_F(A,B)\le 2$, and \eqref{eq:product-balanced-q} gives $Q\le 2+2=4$.
\end{itemize}
\end{proof}

The corresponding count in cycle products is larger as soon as one factor is~$C_4$.

\begin{lemma}
\label{lem:cycle-product-local-c4-count}
For $m,n\ge 4$, every vertex of $C_m\square C_n$ lies on exactly $4$, $5$, or $6$ induced $4$-cycles according as neither, exactly one, or both of $m,n$ are equal to~$4$.
In particular, every vertex of $C_4\square C_n$ lies on $6$ induced $4$-cycles if $n=4$, and on $5$ induced $4$-cycles if $n>4$.
\end{lemma}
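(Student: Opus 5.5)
We count the induced $4$-cycles through a fixed vertex $(g,h)$ of $G:=C_m\square C_n$. By vertex-transitivity of $C_m\square C_n$, the count does not depend on the chosen vertex, so it suffices to treat $(0,0)$ with coordinates taken modulo $m$ and $n$. Since $m,n\ge 4$, both factors are triangle-free, hence $G$ is triangle-free. Therefore every $4$-cycle of $G$ is automatically induced: a chord would create a triangle. So the task reduces to counting all $4$-cycles through $(0,0)$.

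A $4$-cycle through $v=(0,0)$ is determined by an unordered pair of distinct neighbors $\{p,q\}$ of $v$ together with a common neighbor $w\ne v$ of $p$ and $q$. Each cycle is counted exactly once this way, since the two neighbors of $v$ on the cycle are determined. The four neighbors of $v$ are $(\pm1,0)$ and $(0,\pm1)$. We split the six pairs into two types.

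\emph{Mixed pairs.} A pair such as $\{(\varepsilon,0),(0,\delta)\}$ has the common neighbor $(\varepsilon,\delta)$. We also check that $v$ and $(\varepsilon,\delta)$ are the only common neighbors. A common neighbor of $(\varepsilon,0)$ differs from it in one coordinate by $\pm1$, and likewise for $(0,\delta)$. Comparing coordinates, the only solutions are $(0,0)$ and $(\varepsilon,\delta)$; here $m,n\ge4$ ensures that $\pm1,\pm2$ are distinct residues, so no coincidences occur. The four mixed pairs therefore give exactly four square cycles.

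\emph{Same-direction pairs.} Consider $\{(1,0),(-1,0)\}$. A common neighbor other than $v$ must have second coordinate $0$, since changing the second coordinate from both would force $1\equiv-1$. So it lies in the layer $C_m\times\{0\}$, and it is a common $C_m$-neighbor of $1$ and $-1$ other than $0$. In $C_m$ the vertices $1$ and $-1$ have the common neighbor $2\equiv -2$ exactly when $m=4$, and otherwise their only common neighbor is $0$. Hence this pair contributes one cycle exactly when $m=4$, and symmetrically the pair $\{(0,1),(0,-1)\}$ contributes one cycle exactly when $n=4$.

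Summing, the count is $4+[m=4]+[n=4]$, which equals $4$, $5$, or $6$ as claimed; the ``in particular'' clause is then immediate. The main point needing care is the common-neighbor enumeration, where the hypotheses $m,n\ge4$ are used to rule out residue coincidences (which is also exactly where $C_3$ would break triangle-freeness). This is routine bookkeeping.
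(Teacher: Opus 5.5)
Your proof is correct and is essentially the paper's argument: a $4$-cycle through the fixed vertex is determined by its two neighbors of that vertex and the opposite vertex; mixed-coordinate pairs give the four elementary squares, and a same-coordinate pair gives one extra cycle exactly when that factor is $C_4$. The only differences are minor. You get inducedness for free from the triangle-freeness of $C_m\square C_n$ (a chord of a $4$-cycle would create a triangle) instead of checking the diagonals directly. Your residue remark is slightly loose, since $2\equiv-2 \pmod 4$, although the non-coincidences you actually use, such as $2\not\equiv 0$, do hold.
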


\begin{proof}
We prove the count by fixing one vertex and listing the possible induced $4$-cycles through it.
Fix a vertex $z=(i,j)$ of $C_m\square C_n$.
We use terminology motivated by the cycle decompositions of Aberle, Gold, Moshe, and Offner~\cite{AberleGMO2025}.
An \emph{elementary product square} is the $4$-cycle obtained from one edge in each cycle factor, and a \emph{$C_4$-fiber} is the $4$-cycle obtained by fixing one coordinate and traversing the other cycle factor when that factor has length~$4$.
For completeness, we verify directly the local form needed here.
An induced $4$-cycle through $z$ is determined by the two neighbors of $z$ that lie on the cycle and by their common neighbor other than~$z$.

If the two chosen neighbors of $z$ differ from $z$ in different coordinates, then they have a unique common neighbor different from~$z$.
The chosen pair gives one elementary product square.
There are two choices in the $C_m$ coordinate and two choices in the $C_n$ coordinate.
Thus, there are four elementary product squares.

If the two chosen neighbors of $z$ differ from $z$ in the same coordinate, then they have a common neighbor different from $z$ exactly when that cycle factor has length~$4$.
The same-coordinate pair contributes one $C_4$-fiber when $m=4$, and one when $n=4$.
No other pair of neighbors has a common neighbor different from~$z$, so the listed cycles are all the $4$-cycles through~$z$.
Each listed cycle is induced.
For an elementary product square, opposite vertices differ in both coordinates and hence are not adjacent in the Cartesian product.
For a $C_4$-fiber, opposite vertices are distance two in that $C_4$ factor and hence are not adjacent.
\end{proof}

\Cref{fig:c4-product-obstruction} shows the induced $4$-cycle count from \cref{lem:cycle-product-local-c4-count}, which is used below.

\begin{figure}[ht]
\centering
\begin{adjustbox}{max width=\textwidth}
\begin{tikzpicture}[
	vertex/.style={circle,draw,thick,fill=white,inner sep=1pt,minimum size=16pt,font=\scriptsize},
	zvertex/.style={circle,draw,thick,fill=black!10,inner sep=1pt,minimum size=18pt,font=\scriptsize},
	edge/.style={semithick},
	wrapedge/.style={semithick,rounded corners=3pt},
	note/.style={font=\small,align=center},
	panel/.style={font=\large\bfseries,align=center},
	cycles/.style={font=\small,align=left,inner sep=1pt}
]

\node[panel] at (-3.60,2.85) {$C_4\square C_4$};
\foreach \j/\x in {0/-5.85,1/-4.35,2/-2.85,3/-1.35} {
	\coordinate (L0\j) at (\x,1.30);
	\coordinate (L1\j) at ({\x+0.42},0.20);
	\coordinate (L2\j) at (\x,-0.90);
	\coordinate (L3\j) at ({\x-0.42},0.20);
	\draw[edge] (L0\j) -- (L1\j) -- (L2\j) -- (L3\j) -- (L0\j);
}
\foreach \i in {0,1,2,3} {
	\draw[edge] (L\i0) -- (L\i1) -- (L\i2) -- (L\i3);
}
\draw[wrapedge] (L00) -- (-5.85,1.95) -- (-1.35,1.95) -- (L03);
\draw[wrapedge] (L10) -- (-5.43,2.35) -- (-0.93,2.35) -- (L13);
\draw[wrapedge] (L20) -- (-5.85,-1.45) -- (-1.35,-1.45) -- (L23);
\draw[wrapedge] (L30) -- (-6.27,-1.80) -- (-1.77,-1.80) -- (L33);
\foreach \i/\j/\lab in {
	1/0/a,2/0/b,3/0/p,
	0/1/c,1/1/j,2/1/g,3/1/q,
	0/2/r,1/2/s,2/2/t,3/2/u,
	0/3/d,1/3/f,2/3/h,3/3/v} {
	\node[vertex] at (L\i\j) {$\lab$};
}
\node[zvertex] at (L00) {$z$};
\node[cycles] at (-3.60,-2.65) {
	\begin{tabular}{@{}l@{}}
	six induced $4$-cycles through $z$:\\
	$zajcz,\ zafdz,\ zpqcz,\ zpvdz,$\\
	$zabpz,\ zcrdz$
	\end{tabular}
};

\node[panel] at (4.10,2.85) {$C_4\square C_5$};
\foreach \j/\x in {0/1.10,1/2.60,2/4.10,3/5.60,4/7.10} {
	\coordinate (R0\j) at (\x,1.30);
	\coordinate (R1\j) at ({\x+0.42},0.20);
	\coordinate (R2\j) at (\x,-0.90);
	\coordinate (R3\j) at ({\x-0.42},0.20);
	\draw[edge] (R0\j) -- (R1\j) -- (R2\j) -- (R3\j) -- (R0\j);
}
\foreach \i in {0,1,2,3} {
	\draw[edge] (R\i0) -- (R\i1) -- (R\i2) -- (R\i3) -- (R\i4);
}
\draw[wrapedge] (R00) -- (1.10,1.95) -- (7.10,1.95) -- (R04);
\draw[wrapedge] (R10) -- (1.52,2.35) -- (7.52,2.35) -- (R14);
\draw[wrapedge] (R20) -- (1.10,-1.45) -- (7.10,-1.45) -- (R24);
\draw[wrapedge] (R30) -- (0.68,-1.80) -- (6.68,-1.80) -- (R34);
\foreach \i/\j/\lab in {
	1/0/a,2/0/b,3/0/p,
	0/1/c,1/1/j,2/1/g,3/1/q,
	0/2/r,1/2/s,2/2/t,3/2/u,
	0/3/w,1/3/x,2/3/y,3/3/m,
	0/4/d,1/4/f,2/4/h,3/4/v} {
	\node[vertex] at (R\i\j) {$\lab$};
}
\node[zvertex] at (R00) {$z$};
\node[cycles] at (4.10,-2.65) {
	\begin{tabular}{@{}l@{}}
	five induced $4$-cycles through $z$:\\
		$zajcz,\ zafdz,\ zpqcz,\ zpvdz,$\\
		$zabpz$
	\end{tabular}
};
\end{tikzpicture}
\end{adjustbox}
\caption{Induced $4$-cycle counts from \cref{lem:cycle-product-local-c4-count} for $C_4\square C_4$ and $C_4\square C_5$. A word such as $zajcz$ denotes the cycle $z-a-j-c-z$.}
\label{fig:c4-product-obstruction}
\end{figure}

\begin{theorem}
\label{thm:c4-cycle-products-not-ts2}
For every $n\ge 4$, the graph $C_4\square C_n$ is not a $\TSsym_2$-graph.
\end{theorem}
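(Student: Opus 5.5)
The argument is by contradiction, combining the three lemmas just proved. Suppose $C_4\square C_n\cong\TSsym_2(G)$ for some graph $G$ and some $n\ge 4$. The target is connected and non-empty, so \cref{lem:sizebound} lets us replace $G$ by a graph $G'$ with the same two-token graph whose complement root $F:=\overline{G'}$ is connected, has no isolated vertices, and satisfies $\TSsym_2(\overline{F})\cong C_4\square C_n$. We will actually only need the existence of some $F$ with $H:=\TSsym_2(\overline{F})\cong C_4\square C_n$, since the counting lemmas make no further assumptions on $F$.

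Next we check the hypotheses of \cref{lem:ts2-four-regular-c4-cap}. The Cartesian product of two cycles is $4$-regular, because each vertex has two neighbors in each coordinate. It is triangle-free, because both factors have length at least $4$: a triangle in $C_4\square C_n$ would have to lie in a single fiber, as edges changing different coordinates cannot close a triangle, and every fiber is a cycle of length at least $4$. Hence $H$ is triangle-free and $4$-regular. \Cref{lem:ts2-four-regular-c4-cap} then says every vertex of $H$ lies on at most four induced $4$-cycles.

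On the other hand, \cref{lem:cycle-product-local-c4-count} applied with $m=4$ shows that every vertex of $C_4\square C_n$ lies on at least five induced $4$-cycles: exactly $6$ when $n=4$ and exactly $5$ when $n>4$. Since the number of induced $4$-cycles through a vertex is preserved by isomorphism, $H$ would need a vertex lying on at least five induced $4$-cycles. This contradicts the bound of four, so $C_4\square C_n$ is not a $\TSsym_2$-graph.

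None of these steps is a real obstacle, since the substantive work was already done in \cref{lem:ts2-four-regular-c4-cap}. The only points needing care are the triangle-freeness check, which uses $n\ge 4$, and confirming that the lemmas are stated for an arbitrary complement root, so that \cref{lem:sizebound} is not essential. The reduction via \cref{lem:sizebound} is cited only to make explicit that the realization can be taken in complement-root form $\TSsym_2(\overline{F})$, which is true anyway with $F=\overline{G}$.
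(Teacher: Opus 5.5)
Your proof is correct and follows the paper's argument exactly. You take the complement root of the host, note that $C_4\square C_n$ is triangle-free and $4$-regular, and set the cap of four induced $4$-cycles per vertex from \cref{lem:ts2-four-regular-c4-cap} against the count of at least five from \cref{lem:cycle-product-local-c4-count}. As you observe yourself, the detour through \cref{lem:sizebound} is unnecessary; the paper simply sets $F:=\overline{G}$.
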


\begin{proof}
We prove non-realizability by comparing the target's local $4$-cycle count with the complement-root bound.
Suppose to the contrary that $C_4\square C_n\cong\TSsym_2(G)$ for some graph~$G$.
Put $F:=\overline{G}$.
Then $\TSsym_2(G)=\TSsym_2(\overline{F})$.
The graph $C_4\square C_n$ is triangle-free and $4$-regular.
Indeed, both cycle factors have degree~$2$ and no triangles.
By \cref{lem:ts2-four-regular-c4-cap}, every vertex of $C_4\square C_n$ would lie on at most four induced $4$-cycles.
This upper bound contradicts \cref{lem:cycle-product-local-c4-count}, since every vertex of $C_4\square C_n$ lies on at least five induced $4$-cycles for $n\ge 4$.
\end{proof}

\section{Grid Graphs}
\label{sec:grid-realizability}

This section specializes to grids $\Gamma_{m,n}=P_m\square P_n$.
Throughout this section, we keep the standing assumption $2\le m\le n$.
A \emph{path gadget} whose maximum independent sets form a path gives the first construction.
The saturated product formula then realizes every grid $\Gamma_{m,n}$ at token value $m+n-2$.
A second two-path construction gives every grid $\Gamma_{m,n}$ at every token value $k\ge 4$.
We then prove the two-token ladder obstruction and the fixed-token classification of $\Gamma_{3,3}$.

The smallest two-dimensional grid is immediate from the cycle realization.

\begin{proposition}
\label{prop:gamma22}
$\TSsym_2(2K_2)\cong C_4\cong\Gamma_{2,2}$; in particular, $\Gamma_{2,2}$ is a $\TSsym_2$-graph.
\end{proposition}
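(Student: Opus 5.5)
The plan is to verify the isomorphism $\TSsym_2(2K_2)\cong C_4$ directly, and then to note that $C_4\cong\Gamma_{2,2}$. An alternative route would be \cref{cor:product-saturation} applied to $G_1=G_2=K_2$. There $\alpha(K_2)=1$ and $\TSsym_1(K_2)\cong K_2$, since singletons are adjacent exactly when the two vertices are joined. The corollary then gives $\TSsym_2(K_2\sqcup K_2)\cong K_2\square K_2$.

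For the direct check, write $2K_2$ with edges $x_1x_2$ and $y_1y_2$. Every independent $2$-set contains exactly one vertex from each edge, so the four states are $\{x_i,y_j\}$ with $i,j\in\{1,2\}$. A slide moves one token along its own edge while the other token stays fixed. Hence $\{x_i,y_j\}\sim\{x_{i'},y_{j'}\}$ exactly when the two states agree in one coordinate and differ in the other. This gives the $4$-cycle
\[
\{x_1,y_1\},\{x_2,y_1\},\{x_2,y_2\},\{x_1,y_2\},\{x_1,y_1\}.
\]
The two diagonal pairs differ in both coordinates, so they are not adjacent. The resulting graph is $K_2\square K_2\cong C_4$.

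Finally, $\Gamma_{2,2}=P_2\square P_2=K_2\square K_2\cong C_4$ by the definition of the grid. So $\Gamma_{2,2}\cong\TSsym_2(2K_2)$, and hence $\Gamma_{2,2}$ is a $\TSsym_2$-graph. The statement is elementary, and no step is a real obstacle; the only point needing care is the exhaustive listing of the four states and of the slides between them. Equivalently, \cref{thm:paths-cycles}(b) with $n=4$ gives $\TSsym_2(\overline{C_4})\cong C_4$, and $\overline{C_4}\cong 2K_2$, which yields the same conclusion.
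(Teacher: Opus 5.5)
Your proof is correct. The paper's proof is the route in your last sentence: it notes $2K_2\cong\overline{C_4}$, invokes the $n=4$ case of \cref{thm:paths-cycles}(b), namely the Avis--Hoang realization $\TSsym_2(\overline{C_n})\cong C_n$, and identifies $C_4$ with $\Gamma_{2,2}$.

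Your main argument instead enumerates the four independent $2$-sets $\{x_i,y_j\}$ and their slides directly. This is self-contained and does not rely on the cited cycle theorem. Your alternative via \cref{cor:product-saturation} with $G_1=G_2=K_2$ presents the statement as the smallest instance of the disjoint-union product formula. That is the mechanism the paper later uses for ladders ($\Gamma_{2,n}\cong P_n\square K_2$) and for general grids.

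The paper's citation is shorter and treats $\Gamma_{2,2}$ as a member of the cycle family. Your direct check makes the result independent of external results. At this size, either argument is complete.
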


\begin{proof}
We show the realization by using the fact that the smallest grid is a four-cycle.
We have $2K_2\cong\overline{C_4}$.
The $n=4$ case of \cref{thm:paths-cycles}\textup{(b)} gives $\TSsym_2(\overline{C_4})\cong C_4$, and $C_4\cong\Gamma_{2,2}$.
\end{proof}

\subsection{Positive Grid Constructions}

We first combine a \emph{path gadget} with the product formula.

\begin{definition}[Path gadget]
\label{def:path-gadget}
For $n\ge 2$, the graph $G_n$ has vertex set $\{x_1,\dots,x_{n-1},\,y_1,\dots,y_{n-1}\}$.
Its edge set consists of \emph{conflict pairs} $\{x_i,y_i\}$ for $1\le i\le n-1$ and \emph{crossing edges} $\{x_i,y_{i+1}\}$ for $1\le i\le n-2$.
\end{definition}

The example in \cref{fig:path-gadget} shows the gadget and its path of maximum independent sets.

\begin{figure}[ht]
\centering
\begin{adjustbox}{max width=\textwidth}
\begin{tikzpicture}[
	vertex/.style={circle,draw,thick,fill=white,inner sep=1pt,minimum size=19pt,font=\small},
	conflict/.style={thick},
	crossing/.style={thick,draw=black!55},
	state/.style={draw,thick,rounded corners=2pt,fill=black!5,inner sep=4pt,font=\scriptsize,align=center},
	stateedge/.style={thick},
	title/.style={font=\small\bfseries}
]
\node[title] at (6.15,1.85) {$G_5$};
\foreach \i/\x in {1/3.15,2/5.15,3/7.15,4/9.15} {
	\node[vertex] (x\i) at (\x,0) {$x_{\i}$};
	\node[vertex] (y\i) at (\x,1.25) {$y_{\i}$};
	\draw[conflict] (x\i) -- (y\i);
}
\draw[crossing] (x1) -- (y2);
\draw[crossing] (x2) -- (y3);
\draw[crossing] (x3) -- (y4);

\node[title] at (6.15,-2.15) {$\TSsym_4(G_5)$};
\node[state] (s0) at (0,-3.05) {$S_0$\\$x_1x_2x_3x_4$};
\node[state] (s1) at (3.1,-3.05) {$S_1$\\$y_1x_2x_3x_4$};
\node[state] (s2) at (6.2,-3.05) {$S_2$\\$y_1y_2x_3x_4$};
\node[state] (s3) at (9.3,-3.05) {$S_3$\\$y_1y_2y_3x_4$};
\node[state] (s4) at (12.4,-3.05) {$S_4$\\$y_1y_2y_3y_4$};
\draw[stateedge] (s0) -- (s1) -- (s2) -- (s3) -- (s4);
\end{tikzpicture}
\end{adjustbox}
\caption{The path gadget for $n=5$. Vertical edges are conflict pairs, diagonal edges are crossing edges, and lower labels such as $x_1x_2x_3x_4$ denote independent sets.}
\label{fig:path-gadget}
\end{figure}

The path gadget realizes a path by forcing maximum independent sets to move one step at a time from the $x$-row to the $y$-row.

\begin{theorem}
\label{thm:path-gadget}
For every $n\ge 2$, the path gadget satisfies $\alpha(G_n)=n-1$ and $\TSsym_{n-1}(G_n)\cong P_n$.
\end{theorem}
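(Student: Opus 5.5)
The plan is to pin down the maximum independent sets of $G_n$ exactly and then read off their adjacencies.

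\textbf{Independence number.} The $n-1$ conflict pairs $\{x_i,y_i\}$ partition $V(G_n)$ into edges. Hence every independent set meets each pair in at most one vertex, so $\alpha(G_n)\le n-1$. The set $\{x_1,\dots,x_{n-1}\}$ is independent, because every edge of $G_n$ contains exactly one $x$-vertex. This gives $\alpha(G_n)=n-1$.

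\textbf{Describing the maximum independent sets.} An independent $(n-1)$-set $S$ must contain exactly one vertex from each pair. Encode $S$ by a word $w\in\{x,y\}^{n-1}$, where $w_i$ records which of $x_i,y_i$ lies in $S$. The only remaining edges are the crossing edges $x_iy_{i+1}$. So $S$ is independent if and only if the word contains no consecutive pattern $w_i=x$, $w_{i+1}=y$. Words avoiding the factor $xy$ are exactly $y^jx^{n-1-j}$ for $0\le j\le n-1$. This yields exactly $n$ states
\[
S_j=\{y_1,\dots,y_j,x_{j+1},\dots,x_{n-1}\},\qquad 0\le j\le n-1,
\]
as in \cref{fig:path-gadget}.

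\textbf{Adjacency.} The sets $S_j$ and $S_{j'}$ differ in exactly $|j-j'|$ pairs, so they differ by a single element only if $|j-j'|=1$. For $j'=j+1$ we have $S_j\setminus S_{j+1}=\{x_{j+1}\}$ and $S_{j+1}\setminus S_j=\{y_{j+1}\}$. Since $x_{j+1}y_{j+1}$ is a conflict edge, this is a legal slide. Therefore $\TSsym_{n-1}(G_n)$ is the path $S_0S_1\cdots S_{n-1}\cong P_n$.

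\textbf{Main obstacle.} The only step requiring care is the word characterization, namely checking that the crossing edges are exactly the forbidden $xy$ adjacencies in the encoding. A short induction finishes it: once some $w_i=x$, every later letter is forced to be $x$.
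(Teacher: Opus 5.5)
Your proposal is correct and follows essentially the same route as the paper. The conflict pairs give $\alpha(G_n)\le n-1$, and the crossing edges forbid any $x$-then-$y$ transition, so the maximum independent sets are exactly $S_0,\dots,S_{n-1}$; only consecutive $S_j,S_{j+1}$ differ by one slide, along the conflict edge $x_{j+1}y_{j+1}$. Your $\{x,y\}$-word encoding is the paper's $0/1$ indicator sequence under different notation.
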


\begin{proof}
We prove the gadget result by finding its maximum independent sets and their allowed slides.
The $n-1$ conflict pairs $\{x_i,y_i\}$ partition $V(G_n)$ into pairs, so every independent set picks at most one vertex from each pair, giving $\alpha(G_n)\le n-1$.
For $0\le j\le n-1$, define $S_j:=\{y_1,\dots,y_j,\,x_{j+1},\dots,x_{n-1}\}$; this is a set of size $n-1$.
To verify independence of~$S_j$, first consider a conflict pair $\{x_i,y_i\}$.
If both endpoints belonged to~$S_j$, then $i\ge j+1$ and $i\le j$ simultaneously.
Next consider a crossing edge $\{x_i,y_{i+1}\}$.
If both endpoints belonged to~$S_j$, then $i\ge j+1$ and $i+1\le j$.
Both alternatives are impossible, so $S_j$ is independent and $\alpha(G_n)=n-1$.

No other maximum independent set exists.
Let $S$ be a maximum independent set.
Since the conflict pairs partition $V(G_n)$ into $n-1$ pairs and $S$ contains at most one vertex from each pair, $S$ contains exactly one of $x_i,y_i$ for every~$i$.
Define $f(i)=1$ if $y_i\in S$ and $f(i)=0$ if $x_i\in S$.
If $f(i)=0$ and $f(i+1)=1$ for some~$i$, then $\{x_i,y_{i+1}\}$ is a crossing edge with both endpoints in~$S$, contradicting independence.
Hence, as $i$ increases, $f$ cannot transition from $0$ to~$1$.
The sequence therefore has the form $f=(1,\dots,1,0,\dots,0)$, and $S=S_j$ for some~$j$.

Now, $S_j$ and $S_{j+1}$ differ in exactly the element at position $j+1$: $S_j$ has $x_{j+1}$ while $S_{j+1}$ has $y_{j+1}$, connected by the conflict edge $\{x_{j+1},y_{j+1}\}$.
For $|j-j'|\ge 2$, we have $|S_j\triangle S_{j'}|=2|j-j'|\ge 4$, so they cannot be adjacent by a single token-slide.
Therefore, $\TSsym_{n-1}(G_n)$ has vertex set $\{S_0,\dots,S_{n-1}\}$ with edges $\{S_j,S_{j+1}\}$, which is $P_n$.
\end{proof}

Two path gadgets realize the grid at token value $m+n-2$.

\begin{theorem}
\label{thm:grid-realizability}
Let $2\le m\le n$.
The grid $\Gamma_{m,n}$ is a $\TSsym_{m+n-2}$-graph.
Specifically, $\TSsym_{m+n-2}(G_m\sqcup G_n)\cong \Gamma_{m,n}$ for disjoint copies of the two gadgets, where the realizing graph has $2(m-1)+2(n-1)=2(m+n-2)$ vertices.
\end{theorem}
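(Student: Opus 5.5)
The plan is to combine \cref{thm:path-gadget} with the full-token product formula \cref{cor:product-saturation}; no new combinatorics should be needed.

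First, take disjoint copies of the gadgets $G_m$ and $G_n$. Since $m,n\ge 2$, \cref{thm:path-gadget} gives the following facts:
\begin{itemize}
\item $\alpha(G_m)=m-1\ge 1$ and $\TSsym_{m-1}(G_m)\cong P_m$;
\item $\alpha(G_n)=n-1\ge 1$ and $\TSsym_{n-1}(G_n)\cong P_n$.
\end{itemize}
These are exactly the hypotheses of \cref{cor:product-saturation} with $a=m-1$ and $b=n-1$. Applying it yields
\[
\TSsym_{m+n-2}(G_m\sqcup G_n)\cong \TSsym_{m-1}(G_m)\,\square\,\TSsym_{n-1}(G_n)\cong P_m\square P_n=\Gamma_{m,n}.
\]
This shows that $\Gamma_{m,n}$ is a $\TSsym_{m+n-2}$-graph.

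The vertex count follows from \cref{def:path-gadget}, since $|V(G_r)|=2(r-1)$. Hence the host has $2(m-1)+2(n-1)=2(m+n-2)$ vertices.

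The only point needing care is checking that the positivity hypotheses of \cref{cor:product-saturation} hold, namely $\alpha\ge 1$. This is guaranteed because $m\ge 2$. For $m=2$, the gadget $G_2$ is a single edge $x_1y_1$, and $\TSsym_1(G_2)\cong K_2=P_2$, which is consistent with \cref{thm:path-gadget}. I do not expect any step to be a real obstacle, since the work is already contained in \cref{thm:path-gadget} and \cref{thm:product-lemma}.
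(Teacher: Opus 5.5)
Your proposal is correct and follows the paper's proof exactly: apply \cref{thm:path-gadget} to each gadget to get $\alpha(G_r)=r-1$ and $\TSsym_{r-1}(G_r)\cong P_r$, then invoke \cref{cor:product-saturation} with $a=m-1$ and $b=n-1$. Your extra checks, namely the vertex count $2(m-1)+2(n-1)$ and the fact that $G_2\cong K_2$ satisfies $\alpha\ge 1$, are accurate.
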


\begin{proof}
We prove the grid realization by applying the product formula to two path gadgets, with all tokens placed in the gadgets.

By \cref{thm:path-gadget}, $\alpha(G_m)=m-1$ and $\TSsym_{m-1}(G_m)\cong P_m$, and similarly for~$G_n$.
By \cref{cor:product-saturation},
\[
\TSsym_{m+n-2}(G_m\sqcup G_n)\cong \TSsym_{m-1}(G_m)\square \TSsym_{n-1}(G_n)\cong P_m\square P_n=\Gamma_{m,n}.\qedhere
\]
\end{proof}

Padding gives all larger token values.

\begin{corollary}
\label{cor:grid-allk}
Let $2\le m\le n$.
For every $k\ge m+n-2$, there exists a graph~$G$ with $\TSsym_k(G)\cong \Gamma_{m,n}$.
\end{corollary}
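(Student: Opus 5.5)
The plan is to obtain \cref{cor:grid-allk} directly from \cref{thm:grid-realizability} by padding with isolated vertices via \cref{prop:padding}. Set $H:=G_m\sqcup G_n$, the disjoint union of two path gadgets from \cref{def:path-gadget}. By \cref{thm:grid-realizability}, $\TSsym_{m+n-2}(H)\cong\Gamma_{m,n}$.

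Next we record the independence number of $H$. By \cref{thm:path-gadget}, $\alpha(G_m)=m-1$ and $\alpha(G_n)=n-1$. The independence number is additive over disjoint unions, so $\alpha(H)=m+n-2$. Since $m,n\ge 2$, this value is at least $2\ge 1$, which is the hypothesis needed in \cref{prop:padding}.

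Now fix $k\ge m+n-2=\alpha(H)$ and let $G:=H\sqcup (k-m-n+2)K_1$. \Cref{prop:padding} gives $\TSsym_k(G)\cong\TSsym_{\alpha(H)}(H)=\TSsym_{m+n-2}(H)\cong\Gamma_{m,n}$, as required.

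No step is a real obstacle. The only point to check is that the token number used in \cref{thm:grid-realizability} equals $\alpha(H)$ exactly, since the padding proposition is stated for $\TSsym_{\alpha(H)}$. This holds by the additivity above, which is also implicit in \cref{cor:product-saturation}.
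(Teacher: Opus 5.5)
Your proposal is correct and matches the paper's proof: both compute $\alpha(G_m\sqcup G_n)=(m-1)+(n-1)=m+n-2$ from \cref{thm:path-gadget} and then pad the realization from \cref{thm:grid-realizability} with $k-(m+n-2)$ isolated vertices via \cref{prop:padding}. Your explicit check that $\alpha(H)\ge 1$ and that the token value equals $\alpha(H)$ is exactly the hypothesis the padding proposition needs.
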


\begin{proof}
We prove the padding statement by adding isolated vertices to the graph from \cref{thm:grid-realizability}.
For the graph $G_m\sqcup G_n$ from \cref{thm:grid-realizability}, we have $\alpha(G_m)=m-1$ and $\alpha(G_n)=n-1$ by \cref{thm:path-gadget}.
Thus, $\alpha(G_m\sqcup G_n)=\alpha(G_m)+\alpha(G_n)=m+n-2$.
Apply \cref{prop:padding}.
\end{proof}

A second product construction realizes every grid with four tokens.

\begin{theorem}
\label{thm:grid-four-token-product}
Let $2\le m\le n$.
We have $\TSsym_4(\overline{P_{m+1}}\sqcup\overline{P_{n+1}})\cong\Gamma_{m,n}$.
\end{theorem}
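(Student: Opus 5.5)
The plan is to apply the full-token product formula, \cref{cor:product-saturation}, with $G_1:=\overline{P_{m+1}}$ and $G_2:=\overline{P_{n+1}}$. This mirrors the proof of \cref{cor:cycle-path-positive}(c), with paths in both factors.

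First, compute the independence numbers. Since $m\ge 2$, the path $P_{m+1}$ has at least three vertices and at least one edge. It is triangle-free, so $\omega(P_{m+1})=2$. By the identity $\alpha(\overline{G})=\omega(G)$ we get $\alpha(\overline{P_{m+1}})=2$, and likewise $\alpha(\overline{P_{n+1}})=2$. Both values are at least $1$, so \cref{cor:product-saturation} applies with $a=b=2$ and gives
\[
\TSsym_4(\overline{P_{m+1}}\sqcup\overline{P_{n+1}})\cong \TSsym_2(\overline{P_{m+1}})\,\square\,\TSsym_2(\overline{P_{n+1}}).
\]

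Next, identify each factor. By \cref{thm:paths-cycles}(a), $\TSsym_2(\overline{P_{m+1}})\cong L(P_{m+1})\cong P_m$ and $\TSsym_2(\overline{P_{n+1}})\cong P_n$. Therefore the right-hand side is $P_m\square P_n=\Gamma_{m,n}$.

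There is essentially no obstacle here. The only point needing care is that both independence numbers equal exactly $2$, so that the saturated summand $i=b=2$ is the only non-empty one in \cref{thm:product-lemma}. This holds because every path on at least three vertices has clique number $2$.
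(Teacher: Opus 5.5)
Your proposal is correct and matches the paper's proof exactly: both compute $\alpha(\overline{P_{m+1}})=\alpha(\overline{P_{n+1}})=\omega(P_{\cdot})=2$, identify the factors $\TSsym_2(\overline{P_{m+1}})\cong P_m$ and $\TSsym_2(\overline{P_{n+1}})\cong P_n$ via \cref{thm:paths-cycles}(a), and conclude with \cref{cor:product-saturation}.
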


\begin{proof}
We show the four-token realization by using complement-path hosts as the two product factors.
Set $A:=\overline{P_{m+1}}$ and $B:=\overline{P_{n+1}}$.
Then $\alpha(A)=\omega(P_{m+1})=2$ and $\alpha(B)=2$ because $P_{m+1}$, $P_{n+1}$ are triangle-free paths of length $\ge 1$.
By \cref{thm:paths-cycles}, $\TSsym_2(A)\cong P_m$ and $\TSsym_2(B)\cong P_n$.
\Cref{cor:product-saturation} then gives $\TSsym_4(A\sqcup B)\cong \TSsym_2(A)\square \TSsym_2(B)\cong P_m\square P_n=\Gamma_{m,n}$.
\end{proof}

Padding the four-token construction gives all $k\ge 4$.

\begin{theorem}
\label{thm:grid-allk-four}
Let $2\le m\le n$.
For every $k\ge 4$, the grid $\Gamma_{m,n}$ is a $\TSsym_k$-graph.
\end{theorem}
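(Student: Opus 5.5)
The plan is to start from the four-token realization of \cref{thm:grid-four-token-product} and raise the token number with the padding result \cref{prop:padding}. Fix $2\le m\le n$ and $k\ge 4$, and set $H:=\overline{P_{m+1}}\sqcup\overline{P_{n+1}}$. By \cref{thm:grid-four-token-product}, $\TSsym_4(H)\cong\Gamma_{m,n}$.

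To apply padding, I first need $\alpha(H)=4$. Independence number is additive over disjoint unions, so
\[
\alpha(H)=\alpha(\overline{P_{m+1}})+\alpha(\overline{P_{n+1}})=\omega(P_{m+1})+\omega(P_{n+1})=2+2=4.
\]
This uses $m+1,n+1\ge 3$: each path has an edge but no triangle, so its clique number is exactly~$2$. Since $\alpha(H)=4\ge 1$ and $k\ge 4=\alpha(H)$, \cref{prop:padding} applies.

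Let $H'$ be $H$ together with $k-4$ isolated vertices. \Cref{prop:padding} gives
\[
\TSsym_k(H')\cong\TSsym_{\alpha(H)}(H)=\TSsym_4(H)\cong\Gamma_{m,n},
\]
so $\Gamma_{m,n}$ is a $\TSsym_k$-graph for every $k\ge 4$. When $k=4$ no padding is needed, and the claim is exactly \cref{thm:grid-four-token-product}.

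There is no real obstacle. The only point needing care is checking the hypothesis of \cref{prop:padding}: the base token number must equal the independence number of the host, here $\alpha(H)=4$, and not merely be at most it. That equality is precisely the computation above.
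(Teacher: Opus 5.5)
Your proof is correct and follows essentially the same route as the paper. The paper also takes $\alpha(\overline{P_{m+1}}\sqcup\overline{P_{n+1}})=2+2=4$, starts from the four-token realization of \cref{thm:grid-four-token-product}, and pads with $k-4$ isolated vertices via \cref{prop:padding}.
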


\begin{proof}
We prove the all-$k$ statement by padding the four-token construction.
Set $A:=\overline{P_{m+1}}$, $B:=\overline{P_{n+1}}$, and $X:=A\sqcup B$.
As in the proof of \cref{thm:grid-four-token-product}, we have $\alpha(A)=\alpha(B)=2$, so $\alpha(X)=4$, and $\TSsym_4(X)\cong\Gamma_{m,n}$.
For $k\ge 4$, let $X'$ be obtained from $X$ by adding $k-4$ isolated vertices.
By \cref{prop:padding}, $\TSsym_k(X')\cong\TSsym_4(X)\cong\Gamma_{m,n}$.
\end{proof}

The product family also gives ladders for $k\ge 3$.

\begin{corollary}
\label{cor:ladder-kge3}
For every $n\ge 2$ and every $k\ge 3$, the ladder $\Gamma_{2,n}$ is a $\TSsym_k$-graph.
\end{corollary}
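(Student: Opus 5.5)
The plan is to realize $\Gamma_{2,n}=P_2\square P_n$ as a saturated product of a two-vertex path and an $n$-vertex path, using \cref{cor:product-saturation} with three tokens and then padding with \cref{prop:padding}. The factor $P_2\cong K_2$ is realized with one token by $B:=K_2$: since $\alpha(B)=1$, the graph $\TSsym_1(B)$ is the graph on the singleton independent sets, adjacent exactly when the two vertices are adjacent, which gives $K_2\cong P_2$. The factor $P_n$ is realized with two tokens by $A:=\overline{P_{n+1}}$, using \cref{thm:paths-cycles}(a): $\TSsym_2(A)\cong P_n$, and $\alpha(A)=\omega(P_{n+1})=2$ because $n+1\ge 3$ and $P_{n+1}$ is triangle-free but has an edge.

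With $\alpha(A)=2$ and $\alpha(B)=1$, \cref{cor:product-saturation} gives
\[
\TSsym_3(A\sqcup B)\cong \TSsym_2(A)\square\TSsym_1(B)\cong P_n\square P_2\cong\Gamma_{2,n}.
\]
This settles $k=3$. For $k\ge 3$, let $X:=A\sqcup B$, so that $\alpha(X)=3$. Adding $k-3$ isolated vertices to $X$ and applying \cref{prop:padding} gives $\TSsym_k(X\sqcup(k-3)K_1)\cong\TSsym_3(X)\cong\Gamma_{2,n}$.

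Alternatively, this is exactly the case $m=2$ of \cref{cor:product-families}(b), since $P_n\square K_2\cong\Gamma_{2,n}$. So the corollary can simply be cited, with the explicit construction above kept only for concreteness. There is no real obstacle. The only points to check are that $\TSsym_1(K_2)\cong K_2$, where the single token slides along the edge, and that $\alpha(\overline{P_{n+1}})=2$ holds for all $n\ge 2$, so the saturated product formula applies as stated.
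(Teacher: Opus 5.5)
Your proposal is correct and follows the paper's route. The paper proves this by noting $\Gamma_{2,n}\cong P_n\square K_2$ and citing \cref{cor:product-families}(b). Your explicit construction, the saturated product of $\overline{P_{n+1}}$ and $K_2$ at three tokens followed by padding, is exactly the construction behind that corollary; the only difference is that you pad after taking the product rather than before.
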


\begin{proof}
We prove the ladder case by viewing it as a path--complete graph product.
Since $P_2\cong K_2$, we have $\Gamma_{2,n}\cong P_n\square K_2$.
The result follows from \cref{cor:product-families}.
\end{proof}

\subsection{Two-Token Ladder Obstruction}

We next rule out two-token ladders by a normal-form analysis of induced $4$-cycles in a two-token graph.
The two cases are shown in \cref{fig:c4-normal-types}.
The ladder obstruction proceeds in four steps.
First, \cref{lem:c4-normal} gives the two possible normal forms for a single induced cell.
Then \cref{lem:ladder-star-phase,lem:ladder-biclique-phase,lem:ladder-mixed-interface} exclude the all-star, all-biclique, and mixed adjacent-cell patterns.
Finally, \cref{thm:no2n} assembles these exclusions.

\begin{lemma}
\label{lem:c4-normal}
Let $H$ be a graph and suppose $\TSsym_2(H)$ contains an induced $4$-cycle with vertices $I_1,I_2,I_3,I_4$ in cyclic order.
Then, up to renaming vertices of~$H$, one of the following holds:
\begin{enumerate}[label=\textup{(\alph*)}]
\item there exist distinct vertices $a,a_1,a_2,a_3,a_4$ such that $I_i=\{a,a_i\}$ for every $i\in\{1,2,3,4\}$, the edges $a_1a_2,a_2a_3,a_3a_4,a_4a_1$ belong to $E(H)$, and $aa_i\notin E(H)$ for every $i\in\{1,2,3,4\}$, while $a_1a_3,a_2a_4\notin E(H)$; or
\item there exist distinct vertices $a_1,a_2,b_1,b_2$ such that $I_1=\{a_1,b_1\}$, $I_2=\{a_1,b_2\}$, $I_3=\{a_2,b_2\}$, and $I_4=\{a_2,b_1\}$.
Moreover, $a_1a_2,b_1b_2\in E(H)$, and $a_i b_j\notin E(H)$ for all $i,j\in\{1,2\}$.
\end{enumerate}
\end{lemma}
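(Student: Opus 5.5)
Each edge $I_iI_{i+1}$ of the cycle (indices mod $4$) is a single slide, so $|I_i\cap I_{i+1}|=1$ and the two exchanged vertices are adjacent in~$H$. The non-edges $I_1I_3$ and $I_2I_4$ must fail this description, and we split cases accordingly. Write $I_1=\{p,q\}$ and $I_2=\{p,r\}$, with $qr\in E(H)$. Then $I_3$ shares exactly one vertex with $I_2$, so it keeps either $p$ or $r$.

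\emph{Case 1: the common token persists.} Suppose all four sets contain a common vertex $a$. Then $I_i=\{a,a_i\}$ with the $a_i$ distinct, since the four sets are distinct. Consecutive adjacency gives $a_ia_{i+1}\in E(H)$ for all $i$, which is the $4$-cycle $a_1a_2a_3a_4$. Independence of each $I_i$ gives $aa_i\notin E(H)$. The pair $I_1,I_3$ shares $a$ and differs in one token, so non-adjacency in $\TSsym_2(H)$ forces $a_1a_3\notin E(H)$; similarly $a_2a_4\notin E(H)$. This is form (a).

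\emph{Case 2: no vertex lies in all four sets.} We may rotate the cycle to choose the most convenient starting point. The plan is to show that in this case consecutive moves alternate between the two token positions, so that after renaming $I_1=\{a_1,b_1\}$, $I_2=\{a_1,b_2\}$, $I_3=\{a_2,b_2\}$, $I_4=\{a_2,b_1\}$.

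To see the alternation, suppose two consecutive moves change the same token, say $I_1=\{p,q\}$, $I_2=\{p,r\}$, $I_3=\{p,s\}$. Then $I_4$ must meet both $I_3$ and $I_1$ in exactly one vertex. If $p\in I_4$, then $p$ is common to all four sets, which is Case 1. Otherwise $I_4$ contains $s$ (to meet $I_3$) and $q$ (to meet $I_1$), so $I_4=\{q,s\}$. But then $I_1$ and $I_3$ share $p$ and differ by $q\leftrightarrow s$, while $I_4=\{q,s\}$ being independent means $qs\notin E(H)$; this is consistent, so no contradiction arises yet. The move $I_3\to I_4$ replaces $p$ by $q$, giving $pq\in E(H)$. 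This contradicts the independence of $I_1=\{p,q\}$, so this configuration is impossible.

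Hence in Case 2 the moves alternate. Writing the fixed and moving tokens then yields exactly the labelling of form (b), with $a_1,a_2,b_1,b_2$ distinct because the sets are distinct. The slide edges are $b_1b_2$ (from $I_1\to I_2$) and $a_1a_2$ (from $I_2\to I_3$); the remaining two moves reuse these edges. Independence of the four sets gives $a_ib_j\notin E(H)$ for all $i,j$.

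The main obstacle is this case bookkeeping: showing that the "same token twice" pattern either collapses to Case 1 or forces an edge inside an independent set. It needs care to cover every rotation and reflection of the cycle. I would handle it by fixing, without loss of generality, the first index at which two consecutive moves change the same token. The induced-cycle hypothesis ($I_1\not\sim I_3$) is used only in Case 1, to obtain the diagonal non-edges.
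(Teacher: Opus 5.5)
Your proof is correct and follows the paper's argument: you track which token each consecutive slide keeps, get type~(a) when one token is kept on all four states, and rule out every other pattern by finding a slide edge inside an independent state (or a repeated state). The only difference is organizational. Your split on whether some token lies in all four states, combined with the rotation-invariant claim that no token is kept on two consecutive slides, packages the paper's three contradiction subcases into one argument.
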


\begin{proof}
We use two basic facts about $\TSsym_2(H)$.
First, each vertex of $\TSsym_2(H)$ is an independent $2$-set of~$H$.
Thus, if $\{p,q\}$ is one of the states, then $pq\notin E(H)$.
Second, if $\{p,q\}$ and $\{p,r\}$ are adjacent in $\TSsym_2(H)$, then the slide replaces $q$ by $r$, and so $qr\in E(H)$.

Let $I_1=\{x,y\}$.
Since $I_1I_2$ is an edge of $\TSsym_2(H)$, the two states share exactly one token.
After exchanging $x$ and $y$ if necessary, let $I_2=\{x,z\}$ with $z\notin\{x,y\}$.
The edge $I_1I_2$ slides the token from $y$ to $z$, so $yz\in E(H)$.
Hence, $\{y,z\}$ is not an independent set of~$H$.

The edge $I_2I_3$ implies that $I_2$ and $I_3$ share exactly one token.
Thus, they share either $x$ or~$z$.

\begin{itemize}
\item \textbf{Case 1: $I_2$ and $I_3$ share $x$.}
Then $I_3=\{x,w\}$ for some $w\notin\{x,y,z\}$.
Assume first that $I_3$ and $I_4$ also share~$x$.
Then $I_4=\{x,u\}$ with $u\notin\{x,y,z,w\}$.
Set $a:=x$, $a_1:=y$, $a_2:=z$, $a_3:=w$, and $a_4:=u$.
Then $I_i=\{a,a_i\}$ for every $i\in\{1,2,3,4\}$.
Since these four sets are vertices of $\TSsym_2(H)$, we have $aa_i\notin E(H)$ for every~$i$.
The four cycle edges give
\[
a_1a_2,\ a_2a_3,\ a_3a_4,\ a_4a_1\in E(H).
\]
The cycle is induced, so $I_1$ is not adjacent to $I_3$ and $I_2$ is not adjacent to~$I_4$.
Since $I_1$ and $I_3$ share $a$, this gives $a_1a_3\notin E(H)$.
Since $I_2$ and $I_4$ share $a$, this gives $a_2a_4\notin E(H)$.
This is type~\textup{(a)}.

It remains in this case to rule out the possibility that $I_3$ and $I_4$ do not share~$x$.
Then they must share~$w$, so $I_4=\{w,r\}$.
The edge $I_4I_1$ implies that $I_4$ and $I_1=\{x,y\}$ share exactly one token.
Since $w\notin\{x,y\}$, we have $r\in\{x,y\}$.
If $r=x$, then $I_4=I_3$, which is impossible.
Thus, $r=y$.
But then the edge $I_3I_4$ slides the token from $x$ to $y$, so $xy\in E(H)$.
This contradicts the fact that $I_1=\{x,y\}$ is independent.

\item \textbf{Case 2: $I_2$ and $I_3$ share $z$.}
Then $I_3=\{z,v\}$ with $v\notin\{x,z\}$.
Also $v\ne y$, because $yz\in E(H)$, so $\{y,z\}$ is not a vertex of $\TSsym_2(H)$.
Assume first that $I_3$ and $I_4$ share~$z$.
Then $I_4=\{z,q\}$.
The edge $I_4I_1$ forces $q\in\{x,y\}$, since $z\notin\{x,y\}$.
If $q=x$, then $I_4=I_2$.
If $q=y$, then $I_4=\{y,z\}$, which is not independent.
Both options are impossible.

Therefore, $I_3$ and $I_4$ share~$v$.
Write $I_4=\{v,s\}$.
The edge $I_4I_1$ forces $s\in\{x,y\}$, since $v\notin\{x,y\}$.
If $s=x$, then the edge $I_3I_4$ slides the token from $z$ to~$x$, so $xz\in E(H)$.
This contradicts the fact that $I_2=\{x,z\}$ is independent.
Therefore, $s=y$.

Thus, $I_1=\{x,y\}$, $I_2=\{x,z\}$, $I_3=\{v,z\}$, and $I_4=\{v,y\}$.
Set $a_1:=x$, $a_2:=v$, $b_1:=y$, and $b_2:=z$.
Then
\[
I_1=\{a_1,b_1\},\quad
I_2=\{a_1,b_2\},\quad
I_3=\{a_2,b_2\},\quad
I_4=\{a_2,b_1\}.
\]
The edge $I_1I_2$ gives $b_1b_2\in E(H)$.
The edge $I_2I_3$ gives $a_1a_2\in E(H)$.
Since the four displayed sets are vertices of $\TSsym_2(H)$, all four pairs $a_i b_j$ are non-edges of~$H$.
This is type~\textup{(b)}.
\end{itemize}
\end{proof}

\begin{figure}[ht]
\centering
\begin{adjustbox}{max width=\textwidth}
\begin{tikzpicture}[
	vertex/.style={circle,draw,thick,fill=white,inner sep=1pt,minimum size=18pt,font=\small},
	state/.style={draw,thick,rounded corners=2pt,fill=white,inner sep=3pt,font=\small},
	ghoststate/.style={draw=black!45,thick,rounded corners=2pt,fill=black!6,inner sep=3pt,font=\small,text=black!65},
	edge/.style={thick},
	tableline/.style={draw=black!35},
	rowlabel/.style={font=\small\bfseries},
	title/.style={font=\small\bfseries}
]
\draw[tableline] (-7.6,2.7) rectangle (7.6,-4.25);
\draw[tableline] (-7.6,2.2) -- (7.6,2.2);
\draw[tableline] (-7.6,-0.3) -- (7.6,-0.3);
\draw[tableline] (-5.65,2.7) -- (-5.65,-4.25);
\draw[tableline] (0.45,2.7) -- (0.45,-4.25);

\node[title] at (-2.6,2.45) {star type};
\node[title] at (4.0,2.45) {biclique type};
\node[rowlabel] at (-6.62,0.95) {$H$};
\node[rowlabel] at (-6.62,-2.25) {$\TSsym_2(H)$};

\node[vertex] (sta) at (-4.85,0.95) {$a$};
\node[vertex] (sta1) at (-2.6,1.75) {$a_1$};
\node[vertex] (sta2) at (-1.55,0.95) {$a_2$};
\node[vertex] (sta3) at (-2.6,0.15) {$a_3$};
\node[vertex] (sta4) at (-3.65,0.95) {$a_4$};
\draw[edge] (sta1) -- (sta2) -- (sta3) -- (sta4) -- (sta1);

\node[vertex] (ba1) at (3.05,1.55) {$a_1$};
\node[vertex] (ba2) at (5.0,1.55) {$a_2$};
\node[vertex] (bb1) at (3.05,0.35) {$b_1$};
\node[vertex] (bb2) at (5.0,0.35) {$b_2$};
\draw[edge] (ba1) -- (ba2);
\draw[edge] (bb1) -- (bb2);

\node[state] (ts1) at (-2.6,-0.8) {$\{a,a_1\}$};
\node[state] (ts2) at (-1.0,-1.75) {$\{a,a_2\}$};
\node[state] (ts3) at (-2.6,-2.7) {$\{a,a_3\}$};
\node[state] (ts4) at (-4.2,-1.75) {$\{a,a_4\}$};
\draw[edge] (ts1) -- (ts2) -- (ts3) -- (ts4) -- (ts1);
\node[ghoststate] at (-3.5,-3.65) {$\{a_1,a_3\}$};
\node[ghoststate] at (-1.7,-3.65) {$\{a_2,a_4\}$};

\node[state] (tb11) at (2.0,-0.85) {$\{a_1,b_1\}$};
\node[state] (tb12) at (6.05,-0.85) {$\{a_1,b_2\}$};
\node[state] (tb22) at (6.05,-2.65) {$\{a_2,b_2\}$};
\node[state] (tb21) at (2.0,-2.65) {$\{a_2,b_1\}$};
\draw[edge] (tb11) -- (tb12) -- (tb22) -- (tb21) -- (tb11);
\end{tikzpicture}
\end{adjustbox}
\caption{The two normal forms for an induced $4$-cycle in $\TSsym_2(H)$ from \cref{lem:c4-normal}. Gray states in the \emph{star type} are not part of the induced cycle; in the \emph{biclique type}, all cross pairs $a_i b_j$ are nonedges of~$H$.}
\label{fig:c4-normal-types}
\end{figure}

No ladder $\Gamma_{2,n}$ with $n\ge 3$ is a $\TSsym_2$-graph.
Denote the two vertices in column $i$ by $u_i$ and $v_i$.
The edge $u_i v_i$ is the $i$-th \emph{rung}.
For $1\le i<n$, the $i$-th \emph{cell} is the induced four-cycle $u_i,u_{i+1},v_{i+1},v_i,u_i$.
A cell has \emph{star type} if it is of type \textup{(a)} in \cref{lem:c4-normal}, and it has \emph{biclique type} if it is of type \textup{(b)} in \cref{lem:c4-normal}.
The notation is illustrated for $\Gamma_{2,3}$ in \cref{fig:ladder-23-notation}.
In the next three lemmas, cell types are always taken with respect to a fixed isomorphism from the token-sliding graph onto the ladder and the resulting identification of ladder vertices with independent $2$-sets of the host graph.

\begin{figure}[ht]
\centering
\begin{adjustbox}{max width=\textwidth}
\begin{tikzpicture}[
	vertex/.style={circle,draw,thick,fill=white,inner sep=1pt,minimum size=18pt,font=\small},
	edge/.style={thick},
	rung/.style={thick},
	cell/.style={fill=black!7,draw=black!30,dashed},
	note/.style={font=\small}
]
\coordinate (u1) at (0,1.4);
\coordinate (u2) at (2,1.4);
\coordinate (u3) at (4,1.4);
\coordinate (v1) at (0,0);
\coordinate (v2) at (2,0);
\coordinate (v3) at (4,0);
\filldraw[cell] (u1) rectangle (v2);
\filldraw[cell] (u2) rectangle (v3);
\draw[edge] (u1) -- (u2) -- (u3);
\draw[edge] (v1) -- (v2) -- (v3);
\draw[rung] (u1) -- (v1);
\draw[rung] (u2) -- (v2);
\draw[rung] (u3) -- (v3);
\node[vertex] at (u1) {$u_1$};
\node[vertex] at (u2) {$u_2$};
\node[vertex] at (u3) {$u_3$};
\node[vertex] at (v1) {$v_1$};
\node[vertex] at (v2) {$v_2$};
\node[vertex] at (v3) {$v_3$};
\node[note] at (1,1.9) {cell $1$};
\node[note] at (3,1.9) {cell $2$};
\node[note,rotate=90] at (-0.45,0.7) {rung $1$};
\node[note,rotate=90] at (2.45,0.7) {rung $2$};
\node[note,rotate=90] at (4.45,0.7) {rung $3$};
\end{tikzpicture}
\end{adjustbox}
\caption{Notation for $\Gamma_{2,3}$ in \cref{thm:no2n}: vertical edges are rungs, and shaded four-cycles are cells.}
\label{fig:ladder-23-notation}
\end{figure}

We rule out the possible cell-type patterns.
First, a ladder cannot have every cell of star type.

\begin{lemma}
\label{lem:ladder-star-phase}
For every $n\ge 2$, there is no graph~$G$ such that $\TSsym_2(G)\cong \Gamma_{2,n}$ and every cell of $\Gamma_{2,n}$ has star type.
\end{lemma}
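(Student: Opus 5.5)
The plan is to show that the star-type hypothesis forces every vertex of $\Gamma_{2,n}$ to contain one and the same host vertex $a$. The star normal form then produces an independent $2$-set avoiding $a$, which gives a contradiction.

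Suppose $\TSsym_2(G)\cong\Gamma_{2,n}$ and every cell has star type. Apply \cref{lem:c4-normal}(a) to each cell $i$, with $1\le i<n$, to obtain a \emph{center} $a^{(i)}\in V(G)$ contained in all four states of that cell, namely $u_i,u_{i+1},v_{i+1},v_i$.

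First I would show that consecutive cells have the same center. Cells $i$ and $i+1$ share the rung vertices $u_{i+1}$ and $v_{i+1}$. By the star normal form of cell $i$, these states are $\{a^{(i)},p\}$ and $\{a^{(i)},q\}$ with $p\ne q$ (both are distinct from $a^{(i)}$). Their intersection is therefore exactly $\{a^{(i)}\}$. The center $a^{(i+1)}$ lies in both states, so $a^{(i+1)}=a^{(i)}$. By induction, all cells have a common center $a$. The cells cover all vertices of $\Gamma_{2,n}$, since $n\ge2$ guarantees at least one cell and each column lies in some cell. Hence every vertex of $\TSsym_2(G)$, that is, every independent $2$-set of $G$, contains $a$.

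Next I would derive the contradiction. Take cell $1$ in the form of \cref{lem:c4-normal}(a), with $I_j=\{a,a_j\}$. The lemma gives $a_1a_3\notin E(G)$, and $a_1\ne a_3$. So $\{a_1,a_3\}$ is an independent $2$-set of $G$, and hence a vertex of $\TSsym_2(G)$. It does not contain $a$, because $a,a_1,a_3$ are distinct. This contradicts the previous step and proves the lemma.

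I expect the only delicate point to be the propagation step. There one must check that two distinct states of the form $\{a,p\}$ and $\{a,q\}$ meet only in $a$. That follows from their distinctness. Beyond this check, the argument consists of bookkeeping through \cref{lem:c4-normal}.
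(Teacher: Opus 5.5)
Your proof is correct and matches the paper's argument: you propagate the common star center across each shared rung (two distinct adjacent states meet in exactly one token), conclude that every state contains one fixed vertex, and then exhibit an independent $2$-set that avoids that vertex. The only difference is cosmetic: the paper first shows $G[Q]\cong\Gamma_{2,n}$ and takes any non-adjacent ladder pair, whereas you read the non-edge $a_1a_3$ directly off the star normal form, which reaches the same extra state slightly faster.
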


\begin{proof}
We prove the exclusion by carrying the shared-token description along the ladder and then finding an extra token state.

Suppose to the contrary that $\TSsym_2(G)\cong \Gamma_{2,n}$ and every cell has star type.
We identify each vertex of $\Gamma_{2,n}$ with the corresponding independent $2$-set of $G$.

If $n=2$, the unique cell is star-type; its common token $q$ appears in all four vertices of $\Gamma_{2,2}$.
If $n\ge 3$, two adjacent cells of $\Gamma_{2,n}$ share a rung.
The two token sets on that rung are distinct and adjacent in $\TSsym_2(G)$, so they share exactly one token.
Since each of the two cells has one token common to all four of its vertices, the common token is the same for both cells.
The cells of $\Gamma_{2,n}$ form a path, so this common token propagates through all cells.
Thus, in every case there is a vertex $q\in V(G)$ such that every vertex of $\Gamma_{2,n}$ has the form $\{q,x\}$.
Let $Q:=\{x\in V(G):\{q,x\}\text{ is one of the token sets identified with a vertex of }\Gamma_{2,n}\}$.
Every $x\in Q$ satisfies $x\ne q$, since $\{q,x\}$ is a two-element independent set.
For distinct $x,y\in Q$, the vertices $\{q,x\}$ and $\{q,y\}$ are adjacent in $\TSsym_2(G)$ if and only if $xy\in E(G)$.
Since $\TSsym_2(G)$ is exactly $\Gamma_{2,n}$, it follows that $G[Q]\cong \Gamma_{2,n}$.

The ladder $\Gamma_{2,n}$ has two non-adjacent vertices for $n\ge 2$.
Choose distinct $x,y\in Q$ with $xy\notin E(G)$.
Then $\{x,y\}$ is an independent $2$-set of $G$, and hence a vertex of $\TSsym_2(G)$.
The vertex is not of the form $\{q,z\}$, so it is not one of the vertices identified above.
The extra token-graph vertex contradicts the assumed equality $\TSsym_2(G)\cong\Gamma_{2,n}$ and proves the lemma.
\end{proof}

The all-biclique case has the same issue: unused column labels form an extra independent set.

\begin{lemma}
\label{lem:ladder-biclique-phase}
For every $n\ge 3$, there is no graph~$G$ such that $\TSsym_2(G)\cong \Gamma_{2,n}$ and every cell of $\Gamma_{2,n}$ has biclique type.
\end{lemma}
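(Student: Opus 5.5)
The plan is to show that in the all-biclique case the whole ladder is forced into a rigid normal form with one fixed pair of ``row'' tokens. Then the two far columns $1$ and $3$ give a contradiction whether or not the corresponding host vertices are adjacent.

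\textbf{Step 1: one cell.} Suppose $\TSsym_2(G)\cong\Gamma_{2,n}$ with every cell of biclique type, and identify ladder vertices with independent $2$-sets of~$G$. Apply \cref{lem:c4-normal}\textup{(b)} to cell $i$ with cyclic order $u_i,u_{i+1},v_{i+1},v_i$. This gives
\[
u_i=\{a,b\},\quad u_{i+1}=\{a,b'\},\quad v_{i+1}=\{a',b'\},\quad v_i=\{a',b\},
\]
with $aa',bb'\in E(G)$ and all cross pairs non-edges. The relabeling freedom in the lemma (starting point and direction of the cycle) only swaps the roles of the two pairs. Either way, in a biclique cell the two opposite edges slide the same pair of host vertices, and the other two opposite edges slide the other pair. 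Here the two rungs $u_iv_i$ and $u_{i+1}v_{i+1}$ both slide between $a$ and~$a'$. Rung $i$ keeps the token $b$ fixed, and rung $i+1$ keeps $b'$ fixed.

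\textbf{Step 2: propagate along the ladder.} The rung $u_{i+1}v_{i+1}$ is shared by cells $i$ and $i+1$. In cell $i+1$ it is one edge of the biclique $4$-cycle, so the opposite edge, rung $i+2$, slides the same pair $\{a,a'\}$. Inducting along the path of cells gives a single pair $\{a,a'\}$, with $aa'\in E(G)$, and vertices $b_1,\dots,b_n$ such that
\[
u_i=\{a,b_i\},\quad v_i=\{a',b_i\}\qquad\text{for all } i.
\]
Both labels $a$ and $a'$ are needed here, and the assignment of $a$ to the $u$-row must be kept consistent from cell to cell. This follows because the shared rung fixes which endpoint lies in $u_{i+1}$. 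The $b_i$ are pairwise distinct, since the $u_i$ are distinct, and none of them equals $a$ or $a'$. Each $\{a,b_i\}$ and $\{a',b_i\}$ is independent in~$G$.

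\textbf{Step 3: contradiction using columns $1$ and $3$.} This step uses $n\ge3$. We consider the host pair $b_1b_3$.
\begin{itemize}
\item If $b_1b_3\in E(G)$, then $\{a,b_1\}$ and $\{a,b_3\}$ are independent sets sharing $a$ and differing by the slide edge $b_1b_3$. Hence $u_1\sim u_3$ in $\TSsym_2(G)$. This contradicts the fact that $u_1$ and $u_3$ are non-adjacent in $\Gamma_{2,n}$.
\item If $b_1b_3\notin E(G)$, then $\{b_1,b_3\}$ is an independent $2$-set of $G$, hence a vertex of $\TSsym_2(G)$. It contains neither $a$ nor $a'$, so it is none of the listed $2n$ states. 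This contradicts $\TSsym_2(G)\cong\Gamma_{2,n}$.
\end{itemize}

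The main point needing care is Step~2: verifying that the normal form of \cref{lem:c4-normal}\textup{(b)} really makes the rungs of every cell the edges that slide the pair $\{a,a'\}$, and that consecutive cells agree on this pair and on its orientation. Once that is settled, Step~3 is immediate.
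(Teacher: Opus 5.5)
Your proof is correct and matches the paper's argument. You normalize the biclique cells via \cref{lem:c4-normal}\textup{(b)} and propagate one fixed pair of row tokens across the shared rungs, giving $u_i=\{a,b_i\}$ and $v_i=\{a',b_i\}$; this includes the orientation check the paper handles in its induction step. You then use columns $1$ and $3$: either $b_1b_3\in E(G)$ forces $u_1\sim u_3$, or $\{b_1,b_3\}$ is an extra independent $2$-set of $G$.
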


\begin{proof}
We prove the biclique exclusion by carrying the forced labels across the ladder and then finding an unused cross choice.

Suppose to the contrary that $\TSsym_2(G)\cong \Gamma_{2,n}$ and every cell has biclique type.
We identify each vertex of $\Gamma_{2,n}$ with the corresponding independent $2$-set of $G$.

Choose the first cell.
By \cref{lem:c4-normal} type~\textup{(b)}, after possibly swapping the two rows and relabeling, assume $u_1=\{r_0,c_1\}$, $v_1=\{r_1,c_1\}$, $v_2=\{r_1,c_2\}$, and $u_2=\{r_0,c_2\}$.
We prove by induction that, for every $i\in\{1,\ldots,n\}$,
$u_i=\{r_0,c_i\}$ and $v_i=\{r_1,c_i\}$
for some label $c_i$.
The statement holds for $i=1,2$.
Suppose it holds for some $i$ with $2\le i<n$.
The cell between columns $i$ and $i+1$ has biclique type and contains the rung $\{r_0,c_i\},\{r_1,c_i\}$.
After swapping the two coordinate classes in the biclique normal form if necessary, the fixed old rung is the common-second-coordinate edge; hence the opposite rung keeps the first coordinates $r_0,r_1$ and replaces the common second coordinate $c_i$ by one new label.
Hence, the two new vertices in column $i+1$ have the form $\{r_0,c_{i+1}\},\{r_1,c_{i+1}\}$
for some label $c_{i+1}$.
The label $c_{i+1}$ is distinct from $r_0$, $r_1$, and $c_i$.
Otherwise, one of the displayed two-element independent sets is not a two-element set, or the new column repeats the previous column.
The top-row edge from $u_i$ to $u_{i+1}$ forces $u_{i+1}=\{r_0,c_{i+1}\}$, because $u_i=\{r_0,c_i\}$ is disjoint from $\{r_1,c_{i+1}\}$.
Thus, $v_{i+1}=\{r_1,c_{i+1}\}$.
The induction follows.
The labels $c_1,\ldots,c_n$ are pairwise distinct, because the token sets $u_1,\ldots,u_n$ are pairwise distinct.
Moreover, none of the labels $c_i$ equals $r_0$ or $r_1$, because the displayed objects are independent $2$-sets.

Since $n\ge 3$, the vertices $u_1=\{r_0,c_1\}$ and $u_3=\{r_0,c_3\}$ are non-adjacent in $\Gamma_{2,n}$.
If $c_1c_3\in E(G)$, then $u_1$ and $u_3$ would be adjacent in $\TSsym_2(G)$.
Therefore, $c_1c_3\notin E(G)$, and $\{c_1,c_3\}$ is an independent $2$-set of $G$.
The independent set is not one of the displayed sets $\{r_0,c_i\}$ or $\{r_1,c_i\}$.
The extra token-graph vertex contradicts the assumed equality $\TSsym_2(G)\cong\Gamma_{2,n}$ and proves the lemma.
\end{proof}

Adjacent cells also cannot switch type: the shared rung would force extra token-graph vertices with adjacencies not present in the ladder.

\begin{lemma}
\label{lem:ladder-mixed-interface}
No graph~$G$ satisfies $\TSsym_2(G)\cong \Gamma_{2,n}$ for some $n\ge 3$ with two adjacent cells of different types.
\end{lemma}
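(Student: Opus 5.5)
Suppose to the contrary that $\TSsym_2(G)\cong\Gamma_{2,n}$ with $n\ge 3$, and that cell $i$ and cell $i+1$ have different types; they share the rung $u_{i+1}v_{i+1}$. By symmetry (reversing the column order), we may assume cell $i$ has star type and cell $i+1$ has biclique type. We identify ladder vertices with independent $2$-sets and use \cref{lem:c4-normal}.

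\textbf{Step 1: write both cells in coordinates.} Cell $i$ has star type with common token $a$, so its four vertices are $\{a,a_j\}$. In particular the shared rung is $u_{i+1}=\{a,p\}$, $v_{i+1}=\{a,p'\}$ with $pp'\in E(G)$. Moreover $G[\{a_1,\dots,a_4\}]$ is an induced $C_4$ whose vertices are all non-adjacent to $a$. Cell $i+1$ has biclique type. Its edges come in two classes, one keeping the ``$b$'' coordinate and one keeping the ``$a$'' coordinate, so the rung $\{a,p\},\{a,p'\}$ lies in one class. Relabel so that $a$ plays the role of $b_1$. Then $u_{i+2}=\{x,p\}$ and $v_{i+2}=\{x,p'\}$ for a new vertex $x$ with $ax\in E(G)$, and $x$ is non-adjacent to $p$ and $p'$.

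\textbf{Step 2: find an extra state.} Let $w,w'$ be the two star-cell leaves other than $p,p'$, namely the tokens of $u_i=\{a,w\}$ and $v_i=\{a,w'\}$, with $wp,\,w'p'\in E(G)$. In the induced $C_4$ on $w,p,p',w'$, the opposite pairs $\{w,p'\}$ and $\{w',p\}$ are non-edges, so they are independent $2$-sets and hence vertices of the ladder. Their adjacencies are forced: $\{w,p'\}$ is adjacent to $\{a,p'\}=v_{i+1}$ via the slide $aw$, unless $aw\in E(G)$. But $aw\notin E(G)$, since $\{a,w\}$ is a state. So $\{w,p'\}$ is not adjacent to $v_{i+1}$. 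Next, $\{w,p'\}$ is adjacent to $\{w,w'\}$? No, since $ww'\in E(G)$. It is adjacent to $\{x,p'\}=v_{i+2}$ exactly when $wx\in E(G)$.

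The cleaner contradiction is a degree count. The vertex $v_{i+1}=\{a,p'\}$ has ladder degree $3$, with neighbors $v_i$, $u_{i+1}$, and $v_{i+2}$. Similarly $\{w,p'\}$ must be some ladder vertex, and its location must be consistent. Its neighbors include $\{w,p\}$ if that is independent, but $wp\in E(G)$, so it is not. Instead, $\{w,p'\}$ slides $w\to p$ only if $\{p,p'\}$ is independent, which fails. The main route will be to show that $\{w,p'\}$ and $\{w',p\}$ are two further states, each lying at distance $2$ from several cell vertices in a pattern the ladder cannot accommodate. Both are non-adjacent to all four star-cell states: sharing the token $p'$ with $\{a,p'\}$ requires the slide $wa$, a non-edge, and they are disjoint from or non-slidable to the others. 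Both are also adjacent to each other only if they share a token, which they do not. So they are extra vertices of the ladder.

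\textbf{Step 3: locate the extra states.} Each must be some $u_j$ or $v_j$ outside columns $i,i+1,i+2$, or else equal to a column-$(i+2)$ state. The latter is impossible since those contain $x\notin\{w,w',p,p'\}$. Outside columns $i,\dots,i+2$ the positions are far away, so I would exploit adjacency instead. The state $\{x,w\}$ or $\{x,w'\}$ would be independent whenever $xw\notin E(G)$ or $xw'\notin E(G)$, and it would then be adjacent to $u_{i+2}$ or $v_{i+2}$ (slide $p\to w$) and to $u_i$ (slide $a\to x$). That creates a common neighbor of $u_i$ and $u_{i+2}$ other than $u_{i+1}$, which is impossible in a ladder. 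So $xw,xw'\in E(G)$.

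Then consider $\{x,a_\ast\}$-type states, or use $\{w,p'\}$. It is adjacent to $\{x,p'\}=v_{i+2}$ via the slide $wx$, and to $\{w,a\}$? No, since $p'a$ is a non-edge. Its neighbor in column $i+2$ together with the path structure forces $\{w,p'\}$ to be in column $i+1$ or $i+3$. Column $i+1$ is occupied. If it is $v_{i+3}$, then symmetrically $\{w',p\}$ is adjacent to $u_{i+2}$ and must be $u_{i+3}$. But $\{w,p'\}$ and $\{w',p\}$ are disjoint and hence non-adjacent, contradicting the rung $u_{i+3}v_{i+3}$. This last placement argument, which pins down the extra states using the ladder's common-neighbor structure, is the step I expect to be the main obstacle. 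There I would carefully enumerate the few possible neighbors of $v_{i+2}$.
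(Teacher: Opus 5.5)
Your argument is correct and is essentially the paper's proof. The extra states $\{w,p'\}$ and $\{w',p\}$ are the paper's $Z_1,Z_2$, your unique-common-neighbor argument forcing $xw,xw'\in E(G)$ is its proof that $a\rho,d\rho\in E(G)$, and placing $Z_1,Z_2$ as a disjoint, hence non-adjacent, next rung is its Case~2. When writing it up, drop the exploratory detours in Step~2, note why $x\notin\{w,w'\}$ (because $wp,w'p'\in E(G)$), and state explicitly the boundary case where cell $i+1$ is the last cell, in which $v_{i+2}$ acquires a third neighbor $\{w,p'\}$ (the paper's Case~1).
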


\begin{proof}
We prove the mixed-interface exclusion by showing that a type change forces extra token states.
Suppose to the contrary that such a graph $G$ exists.
By reversing the order of the columns if necessary, assume that the left cell has star type and the right cell has biclique type.
We identify each ladder vertex with the corresponding independent $2$-set of $G$.

Let the left cell have cyclic order $A,B,C,D,A$, where the edge $BC$ is the shared rung with the right cell.
Since the left cell has star type, assume
$A=\{q,a\}$, $B=\{q,b\}$, $C=\{q,c\}$, and $D=\{q,d\}$.
The right cell has cyclic order $B,R,S,C,B$.
Since the right cell has biclique type, its two new vertices have the form $R=\{\rho,b\}$ and $S=\{\rho,c\}$.
The label $\rho$ is distinct from $q,a,b,c,d$.
Indeed, $\rho=q$ repeats $B$ or $C$, while $\rho=b$ or $\rho=c$ gives a one-element token set.
If $\rho=a$, then $R=\{a,b\}$, but the edge $AB$ of the left cell forces $ab\in E(G)$, so $R$ is not independent.
If $\rho=d$, then $S=\{d,c\}$, but the edge $CD$ of the left cell forces $cd\in E(G)$, so $S$ is not independent.

The four edges of the left cell give $ab,bc,cd,da\in E(G)$.
The two missing diagonals of the induced left cell give
$ac,bd\notin E(G)$.
Therefore, $Z_1:=\{a,c\}$ and $Z_2:=\{b,d\}$ are vertices of $\TSsym_2(G)$.
The edge $BR$ of the right cell gives $q\rho\in E(G)$.

We show that $a\rho\in E(G)$.
Suppose that $a\rho\notin E(G)$.
Then $U:=\{a,\rho\}$ is a vertex of $\TSsym_2(G)$.
The vertex $U$ is adjacent to $A=\{q,a\}$, because $q\rho\in E(G)$.
It is also adjacent to $R=\{\rho,b\}$, because $ab\in E(G)$.
Thus, $U$ is a common neighbor of $A$ and $R$.
In the ladder, $A$ and $R$ lie in the same row at distance two, and their unique common neighbor is $B$.
Since $a\ne q,b$ and $\rho\ne q,b$, the set $U$ is not equal to $B=\{q,b\}$, contradicting the assumed equality with the ladder.
Hence, $a\rho\in E(G)$.
By applying the same argument to $D$ and $S$, we get $d\rho\in E(G)$.

Now, $Z_1=\{a,c\}$ is adjacent to $S=\{\rho,c\}$, because $a\rho\in E(G)$.
Similarly, $Z_2=\{b,d\}$ is adjacent to $R=\{\rho,b\}$, because $d\rho\in E(G)$.
Moreover, $Z_1\ne C$ because $a\ne q$, and $Z_1\ne R$ because $\rho$ is distinct from $a,c$ and $b\ne a,c$.
Similarly, $Z_2\ne B$ and $Z_2\ne S$.
The remaining possible equalities with $B,C,R,S$ are excluded by the distinctness of $q,a,b,c,d,\rho$, and $Z_1=Z_2$ would force one of $a=b,a=d,c=b,c=d$.
Thus, $Z_1$ and $Z_2$ are distinct from each other and from the relevant ladder vertices $B,C,R,S$.

\begin{itemize}
\item \textbf{Case 1.} The right cell is the last cell of the ladder.
Then $R$ and $S$ are boundary vertices of degree two in $\Gamma_{2,n}$.
The vertex $S$ is already adjacent to $C$ and $R$, and $Z_1$ is distinct from both.
Thus, $S$ has a third neighbor in $\TSsym_2(G)$, although the corresponding boundary ladder vertex has degree two.
This contradicts the assumed equality with the ladder.

\item \textbf{Case 2.} The right cell is not the last cell of the ladder.
Then $R$ and $S$ are internal vertices of degree three in $\Gamma_{2,n}$.
The vertex $S$ is already adjacent to $C$ and $R$, so its unique neighbor outside the right cell must be $Z_1$.
Similarly, the unique neighbor of $R$ outside the right cell must be $Z_2$.
These two forced outside neighbors must be the two vertices of the next rung of the ladder.
Since a rung is an edge, $Z_1$ and $Z_2$ must be adjacent in $\TSsym_2(G)$.
But $Z_1=\{a,c\}$ and $Z_2=\{b,d\}$ are disjoint token sets.
Two vertices of $\TSsym_2(G)$ can be adjacent only if their token sets share one token, a contradiction.
\end{itemize}

Both cases are impossible.
\end{proof}

These three lemmas cover all possible induced $4$-cycle normal forms for a ladder.

\begin{theorem}
\label{thm:no2n}
For every $n\ge 3$, the ladder $\Gamma_{2,n}$ is not a $\TSsym_2$-graph.
\end{theorem}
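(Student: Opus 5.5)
The plan is to assemble the three preceding lemmas into a case analysis over the types of the cells. Suppose, toward a contradiction, that $\TSsym_2(G)\cong\Gamma_{2,n}$ for some graph $G$ and some $n\ge 3$. Fix such an isomorphism, and identify each ladder vertex with the corresponding independent $2$-set of~$G$.

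The first step is to classify every cell. For each $1\le i<n$, the $i$-th cell $u_i,u_{i+1},v_{i+1},v_i$ is an induced $4$-cycle of $\Gamma_{2,n}$, and hence of $\TSsym_2(G)$. By \cref{lem:c4-normal}, it is of type~\textup{(a)} or type~\textup{(b)}, so it has star type or biclique type. One point needs a brief check: the two normal forms are mutually exclusive, since in type~\textup{(a)} all four states share a common token, whereas in type~\textup{(b)} no token lies in all four states. So the type of each cell is well defined with respect to the fixed identification.

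The second step is a case split on the sequence of cell types along the path of cells $1,\dots,n-1$.
\begin{itemize}
\item If two consecutive cells have different types, then \cref{lem:ladder-mixed-interface} gives a contradiction. This lemma applies because $n\ge 3$ guarantees at least two cells.
\item Otherwise all cells have the same type. If that type is star, \cref{lem:ladder-star-phase} excludes it; if it is biclique, \cref{lem:ladder-biclique-phase} excludes it, using $n\ge 3$.
\end{itemize}
These cases are exhaustive, so no such $G$ exists.

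I expect essentially no obstacle, since the heavy lifting is done in the lemmas. The only care needed is to confirm that the hypotheses match. All three lemmas use the same fixed isomorphism and the resulting cell types. The mixed lemma only needs some adjacent pair of cells with differing types, and if no such pair exists, constancy of the type along the path of cells follows immediately.
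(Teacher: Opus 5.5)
Your proposal is correct and matches the paper's proof: classify each cell via \cref{lem:c4-normal}, use \cref{lem:ladder-mixed-interface} to force a constant type along the path of cells, and then rule out the all-star and all-biclique cases with \cref{lem:ladder-star-phase,lem:ladder-biclique-phase}. Your explicit check that the two normal forms are mutually exclusive (a common token in type~(a) versus disjoint $I_1,I_3$ in type~(b)) is a small point the paper leaves implicit, and it is correct.
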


\begin{proof}
We prove the theorem by reducing every ladder realization to one of the three cell-type patterns already ruled out.
Suppose to the contrary that $\TSsym_2(G)\cong \Gamma_{2,n}$ for some graph~$G$.
Since $n\ge 3$, the ladder $\Gamma_{2,n}$ has at least two cells.
By \cref{lem:c4-normal}, every cell has either star type or biclique type.
By \cref{lem:ladder-mixed-interface}, two adjacent cells cannot have different types.
Since the cells of $\Gamma_{2,n}$ form a path, all cells have the same type.
\begin{itemize}
\item If all cells have star type, then \cref{lem:ladder-star-phase} gives a contradiction.

\item If all cells have biclique type, then \cref{lem:ladder-biclique-phase} gives a contradiction.
\end{itemize}
The two cases exhaust all possibilities, so no such graph $G$ exists.
\end{proof}

The preceding results give the fixed-token ladder classification.

\begin{theorem}
\label{thm:ladder-bounds}
For $n\ge 2$ and $k\ge 2$, the ladder $\Gamma_{2,n}$ is a $\TSsym_k$-graph if and only if either $k\ge 3$ or $(n,k)=(2,2)$.
\end{theorem}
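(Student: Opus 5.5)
The proof is mostly a matter of assembling results already established, so I would split it into the two directions and the cases $k=2$ and $k\ge 3$.

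For the ``if'' direction, first take $k\ge 3$. Here \cref{cor:ladder-kge3} already realizes $\Gamma_{2,n}\cong P_n\square K_2$ for every $n\ge 2$; this rests on \cref{cor:product-families}(b) with $m=2$. Next take $(n,k)=(2,2)$. Here \cref{prop:gamma22} gives $\TSsym_2(2K_2)\cong C_4\cong\Gamma_{2,2}$. As a cross-check, the general grid results \cref{thm:grid-realizability,thm:grid-allk-four} cover only $k=n$ and $k\ge 4$, so the ladder-specific corollary is the right tool for $k=3$.

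For the ``only if'' direction, suppose $\Gamma_{2,n}$ is a $\TSsym_k$-graph. If $k\ge 3$ there is nothing to prove. So assume $k=2$. If $n=2$, then $(n,k)=(2,2)$ and we are done. If $n\ge 3$, \cref{thm:no2n} says $\Gamma_{2,n}$ is not a $\TSsym_2$-graph, which is a contradiction. Hence $k\ge 3$ or $(n,k)=(2,2)$, as required.

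Essentially all the real work lives in \cref{thm:no2n}, through the cell-type lemmas \cref{lem:ladder-star-phase,lem:ladder-biclique-phase,lem:ladder-mixed-interface}, and that work is already done. The one point I would double-check is the case split in the ``only if'' direction: since the hypothesis is $k\ge 2$, the only excluded pairs are $k=2$ with $n\ge 3$, and that is exactly the range of \cref{thm:no2n}. There is no separate obstacle beyond confirming that the cited realizations match the indices $n\ge 2$.
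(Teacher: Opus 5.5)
Your proposal is correct and matches the paper's proof. Like the paper, you settle $(n,k)=(2,2)$ by \cref{prop:gamma22}, exclude $k=2$ with $n\ge 3$ by \cref{thm:no2n}, and realize every ladder for $k\ge 3$ by \cref{cor:ladder-kge3}.
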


\begin{proof}
We prove the classification by combining the small case, the two-token obstruction, and the positive construction for $k\ge 3$.
If $k=2$, then \cref{prop:gamma22} realizes exactly the case $n=2$, while \cref{thm:no2n} rules out all $n\ge 3$.
If $k\ge 3$, then \cref{cor:ladder-kge3} realizes every ladder $\Gamma_{2,n}$ with $n\ge 2$.
\end{proof}

\begin{remark}
\label{rem:smallest-non-ts2-grid}
Under the standing assumption $2\le m\le n$, the grid $\Gamma_{2,3}$ is the smallest grid by number of vertices that is not a $\TSsym_2$-graph.
Indeed, the only smaller grid permitted by the assumption is $\Gamma_{2,2}\cong C_4$, which is realizable by \cref{prop:gamma22}.
\end{remark}

\subsection{The 3-by-3 Grid}
\label{subsec:gamma33}

We now settle the first non-ladder width-three grid.
The two-token proof uses a complement root and the degree formula from \cref{prop:per-edge-degree}.
The three-token proof uses the $3$-Gallai graph.

To rule out $\Gamma_{3,3}$ at two tokens, we need one more identity that works even before reducing the host.
It counts the boundary of a \emph{root star} in the target and converts large root degree into too many internal target edges.

\begin{lemma}
\label{lem:star-set-identity}
Suppose $\phi:\TSsym_2(\overline{F})\xrightarrow{\cong} \Gamma$ is an isomorphism, and identify $V(\TSsym_2(\overline{F}))$ with $E(F)$.
Let $v\in V(F)$ have degree $d$, and put $\widehat S_v:=\{vx:x\in N_F(v)\}$ and $S_v:=\phi(\widehat S_v)\subseteq V(\Gamma)$.
Let $W:=V(F)\setminus N_F[v]$ and let $b:=|E(F[W])|$.
Then
\[
\partial_\Gamma(S_v)+b=|E(F)|-d-\binom d2+|E(\Gamma[S_v])|,
\]
where $\partial_\Gamma(S_v)$ denotes the number of $\Gamma$-edges with exactly one endpoint in~$S_v$.
\end{lemma}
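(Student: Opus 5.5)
The plan is a direct double count. I will partition $E(F)$ according to where the endpoints sit relative to $v$, and then read off both $|E(\Gamma[S_v])|$ and $\partial_\Gamma(S_v)$ from the token-sliding adjacency rule. Throughout I use the rule recorded before \cref{lem:ts2-local-c4-count}: two root edges $xy,xz$ sharing the endpoint $x$ are adjacent in $\TSsym_2(\overline F)$ exactly when $yz\notin E(F)$, and disjoint root edges are never adjacent. Since $\phi$ is an isomorphism, I can count the $\Gamma$-quantities inside $\TSsym_2(\overline F)$ on the set $\widehat S_v$.

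\emph{Step 1: partition the root edges.} Every edge of $F$ has one of four types: incident with $v$, inside $N_F(v)$, between $N_F(v)$ and $W$, or inside $W$. There are no further types, because $v$ has no neighbours in $W$. Writing $e_N:=|E(F[N_F(v)])|$ and $e_{NW}:=m_F(N_F(v),W)$, this gives
\[
|E(F)|=d+e_N+e_{NW}+b.
\]

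\emph{Step 2: internal edges of $S_v$.} All vertices of $\widehat S_v$ share the endpoint $v$. So $vx\sim vy$ holds exactly when $xy\notin E(F)$, and therefore
\[
|E(\Gamma[S_v])|=\binom d2-e_N.
\]

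\emph{Step 3: boundary edges.} This is the only step needing care. A neighbour of $vx$ outside $\widehat S_v$ must share the endpoint $x$, not $v$, so it has the form $xw$ with $w\ne v$ and $vw\notin E(F)$. Hence $w\in W$. Conversely, every root edge $xw$ with $x\in N_F(v)$ and $w\in W$ is adjacent to $vx$, since $vw\notin E(F)$. Two further points have to be checked:
\begin{itemize}
\item Such an edge $xw$ meets $\widehat S_v$ in exactly one adjacency. Its only endpoint in $N_F(v)$ is $x$, and $w\notin N_F(v)$.
\item Edges inside $N_F(v)$ and edges inside $W$ contribute nothing. For $xy$ with $x,y\in N_F(v)$, adjacency to $vx$ would require $vy\notin E(F)$, which fails. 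An edge inside $W$ is disjoint from every $vx$.
\end{itemize}
Thus $\partial_\Gamma(S_v)=e_{NW}$.

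\emph{Step 4: combine.} Substituting Steps 1--3 into the right-hand side gives
\[
(d+e_N+e_{NW}+b)-d-\binom d2+\binom d2-e_N=e_{NW}+b=\partial_\Gamma(S_v)+b,
\]
which is the claimed identity. The main obstacle is only the bookkeeping in Step 3, namely making sure that no boundary edge is missed or counted twice. The uniqueness of the $N_F(v)$-endpoint settles this.
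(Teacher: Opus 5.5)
Your proposal is correct and follows essentially the same route as the paper. Both arguments partition $E(F)$ by position relative to $v$, count $|E(\Gamma[S_v])|=\binom d2-|E(F[N_F(v)])|$ from the shared-endpoint rule, and identify the boundary edges of $S_v$ with the $F$-edges between $N_F(v)$ and $W$. Your explicit check that each such edge $xw$ is adjacent to exactly one star edge, namely $vx$, makes the bijection slightly more careful than the paper, which leaves that injectivity implicit.
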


\begin{proof}
We prove the identity by separating the target edges inside the root star from those leaving it.
Two star edges $vx,vy\in \widehat S_v$ are adjacent in $\TSsym_2(\overline{F})$ exactly when $xy\notin E(F)$.
Thus, $\displaystyle |E(\Gamma[S_v])|=\binom d2-|E(F[N_F(v)])|$.
Moreover, under~$\phi$, the $\Gamma$-edges leaving $S_v$ correspond exactly to the $F$-edges with one endpoint in $N_F(v)$ and the other in~$W$.
Indeed, if $xy\in E(F)$ with $x\in N_F(v)$ and $y\in W$, then the two root edges $vx$ and $xy$ share~$x$, and the other endpoints $v,y$ are non-adjacent in~$F$, so they are adjacent in $\TSsym_2(\overline{F})$.
Conversely, any $\Gamma$-edge leaving $S_v$ has preimage a $\TSsym_2(\overline{F})$-edge from some $vx\in\widehat S_v$ to a root edge outside $\widehat S_v$.
That root edge must share~$x$, say it is $xy$; if it shared~$v$, it would also lie in $\widehat S_v$.
Token-sliding adjacency forces $vy\notin E(F)$, and hence $y\in W$.
Let $c$ be the number of $F$-edges with one endpoint in $N_F(v)$ and the other in~$W$.
Therefore, $\partial_\Gamma(S_v)=c$.
Since $|E(F)|=d+|E(F[N_F(v)])|+c+b$, substitution gives the identity.
\end{proof}

The first width-three exclusion gives the main two-token obstruction for $\Gamma_{3,3}$.
If a complement root has nine edges and maximum degree at most~$4$, then the unique center of $\Gamma_{3,3}$ forces an impossible parity pattern around its preimage edge.

\begin{lemma}
\label{lem:no33-center-obstruction}
Let $F$ be a graph with $|E(F)|=9$ and $\Delta(F)\le 4$.
Then $\TSsym_2(\overline{F})\not\cong \Gamma_{3,3}$.
\end{lemma}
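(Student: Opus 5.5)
The plan is to pin down the local structure of $F$ around the root edge that represents the center of $\Gamma_{3,3}$. I would then show that, with only nine root edges and $\Delta(F)\le 4$, this edge cannot lie on four induced $4$-cycles.

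Suppose, toward a contradiction, that $\phi:\TSsym_2(\overline{F})\to\Gamma_{3,3}$ is an isomorphism. Identify target vertices with edges of $F$, and let $ab\in E(F)$ be the preimage of the center of $\Gamma_{3,3}$. I will use three facts about the target.
\begin{itemize}
\item $\Gamma_{3,3}$ is bipartite, hence triangle-free.
\item The center has degree $4$.
\item The center lies on exactly four induced $4$-cycles, namely the four unit squares.
\end{itemize}
Put $A:=N_F(a)\setminus N_F[b]$, $B:=N_F(b)\setminus N_F[a]$ and $C:=N_F(a)\cap N_F(b)$ as in \cref{lem:ts2-local-c4-count}, and write $t:=|C|$. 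By \cref{prop:per-edge-degree}, $\deg_F(a)+\deg_F(b)=6+2t$. Since $\Delta(F)\le 4$, this sum is at most $8$, so $t\in\{0,1\}$.

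The possible degree pairs are therefore limited. If $t=0$, then $\{\deg_F(a),\deg_F(b)\}$ is $\{3,3\}$ or $\{2,4\}$; the pair $\{1,5\}$ is excluded by the degree bound. If $t=1$, the pair is $\{4,4\}$. In every case $|A|+|B|=4$. Triangle-freeness of the target makes $A$ and $B$ cliques of $F$, as in the proof of \cref{lem:ts2-local-c4-count}. That lemma then gives
\[
4=m_F(A,B)+\sum_{s\in C}\binom{|A\setminus N_F(s)|}{2}+\sum_{s\in C}\binom{|B\setminus N_F(s)|}{2}.
\]
The proof then splits into three cases.

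\begin{itemize}
\item \textbf{Case $(2,4)$, $t=0$.} Here $|A|=1$ and $|B|=3$, and the sums over $C$ vanish. So $4=m_F(A,B)\le |A|\cdot|B|=3$, a contradiction.

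\item \textbf{Case $(3,3)$, $t=0$.} Here $|A|=|B|=2$ and again $C=\emptyset$, so $m_F(A,B)=4$. Thus every $A$–$B$ pair is an edge of $F$. Now count edges: $ab$, the four edges from $\{a,b\}$ to $A\cup B$, the two clique edges inside $A$ and inside $B$, and the four cross edges give at least $11>9$ edges. This is a contradiction.

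\item \textbf{Case $(4,4)$, $t=1$.} Write $C=\{s\}$ and $|A|=|B|=2$. The forced edges are $ab$, $as$, $bs$, the four edges from $a$ to $A$ and from $b$ to $B$, and one clique edge in each of $A$ and $B$. That is already $9=|E(F)|$ edges, so $m_F(A,B)=0$. The count above then gives $4=\binom{p_A}{2}+\binom{p_B}{2}\le 1+1=2$, where $p_A=|A\setminus N_F(s)|\le 2$ and $p_B=|B\setminus N_F(s)|\le 2$. This is a contradiction.
\end{itemize}

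The main care points are two. First, confirm that the center lies on exactly four induced $4$-cycles: a vertex of a grid is on an induced $4$-cycle only through unit squares, and the center is a corner of exactly four of them. Second, check that the listed edges in the last two cases are pairwise distinct, so that the edge count is valid. Distinctness follows because $A$, $B$ and $C$ are pairwise disjoint and disjoint from $\{a,b\}$.

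If the $4$-cycle count turned out to be delicate, I would fall back on the parity pattern at the center. Its four neighbors have degree $3$ and the center has degree $4$, so \cref{prop:per-edge-degree} gives odd degree sums on the edges $au$ and $bv$. Combined with $\deg_F(a)+\deg_F(b)$ being even, this would constrain the parities of the degrees of the vertices in $A\cup B$.
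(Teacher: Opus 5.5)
Your proof is correct, and it reaches the contradiction by a different mechanism from the paper's.

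\textbf{The paper's route.} The paper never counts $4$-cycles here. It reduces \cref{prop:per-edge-degree} modulo $2$. The odd-degree vertices of $\Gamma_{3,3}$ are exactly the four pairwise non-adjacent side vertices, so the four ``mixed'' root edges (endpoints of opposite degree parity) must all be incident with $a$ or $b$. The proof then splits on $\{\deg_F(a),\deg_F(b)\}\in\{\{2,4\},\{3,3\},\{4,4\}\}$ and on how the mixed edges divide between $a$ and $b$. Non-adjacency of side vertices forces edges among the far endpoints $x_i$ (respectively $y_j$), which gives lower bounds on $\deg_F(x_i)$ and $\deg_F(y_j)$. It finishes with either $\Delta(F)\le 4$ or the degree sum $2|E(F)|=18$, after separate arguments that $x_1,x_2,y_1,y_2$ are distinct.

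\textbf{Your route.} You use only local data at the center: degree $4$, no triangle through it, and at least four induced $4$-cycles through it. Feeding these into \cref{lem:ts2-local-c4-count}, you finish by direct edge counting. The $(4,4)$ case is especially clean, because the nine forced edges already exhaust $E(F)$, which pins down $m_F(A,B)=0$ and $p_A=p_B=2$.

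\textbf{Trade-offs.} Your argument is shorter and avoids the distinctness bookkeeping. The price is that it depends on the $4$-cycle machinery built for \cref{thm:c4-cycle-products-not-ts2}. The paper's parity argument needs only \cref{prop:per-edge-degree,lem:line-avis}, but it uses the global degree pattern of $\Gamma_{3,3}$ rather than just the neighborhood of the center.

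Your fallback paragraph is unnecessary; it is essentially the opening of the paper's proof.
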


\begin{proof}
We prove the claim by mapping the grid center to a root edge and then counting the four odd-degree side vertices around it.
Suppose to the contrary that $\TSsym_2(\overline{F})\cong \Gamma_{3,3}$.
Let $\Gamma:=\Gamma_{3,3}$, and let $c$ be the unique degree-$4$ vertex of~$\Gamma$.
Fix an isomorphism $\phi:\TSsym_2(\overline{F})\xrightarrow{\cong}\Gamma$.
The vertices of $\TSsym_2(\overline{F})$ are exactly the edges of~$F$, so we regard $\phi$ as a bijection from $E(F)$ to $V(\Gamma)$.

Let $ab\in E(F)$ be the edge such that $\phi(ab)=c$.
Call an edge $xy\in E(F)$ \emph{mixed} if $\deg_F(x)$ and $\deg_F(y)$ have opposite parity.
By \cref{prop:per-edge-degree}, every edge $xy\in E(F)$ satisfies $\deg_\Gamma(\phi(xy))=\deg_F(x)+\deg_F(y)-2-2|N_F(x)\cap N_F(y)|$.
Thus, $xy$ is mixed if and only if $\phi(xy)$ has odd degree in~$\Gamma$.

In $\Gamma_{3,3}$, the odd-degree vertices are precisely the four neighbors of the center~$c$, and these four vertices are pairwise non-adjacent.
Hence, the mixed edges of~$F$ are precisely the four edges mapped by $\phi$ to $N_\Gamma(c)$.
In particular, every mixed edge is adjacent to $ab$ in $\TSsym_2(\overline{F})$.
By \cref{lem:line-avis}, any root edge adjacent to $ab$ in the token-sliding graph must share an endpoint with~$ab$.
Therefore, every mixed edge is incident with $a$ or with~$b$.

Put $d_a:=\deg_F(a)$, $d_b:=\deg_F(b)$, and $t_0:=t_F(ab)$.
Since $\phi(ab)=c$ and $\deg_\Gamma(c)=4$, \cref{prop:per-edge-degree} gives $4=d_a+d_b-2-2t_0$.
Moreover, $ab$ is not mixed, because $c$ has even degree.
Thus, $d_a$ and $d_b$ have the same parity.
The displayed equation gives $d_a+d_b\ge 6$, and $d_a,d_b\le 4$ by hypothesis.
Therefore, the unordered pair $\{d_a,d_b\}$ is one of $\{3,3\}$, $\{2,4\}$, and $\{4,4\}$.

Let $r$ be the number of mixed edges incident with~$a$, and let $s:=4-r$.
Let the mixed edges be $ax_1,\ldots,ax_r$ and $by_1,\ldots,by_s$.
The vertices $x_1,\ldots,x_r$ are pairwise distinct and none of them is~$b$.
The vertices $y_1,\ldots,y_s$ are pairwise distinct and none of them is~$a$.
A vertex $x_i$ may equal some vertex $y_j$.

A local estimate controls the degrees of the endpoints away from $a$ and~$b$.
Fix $i\in\{1,\ldots,r\}$.
For every $h\ne i$, the two edges $ax_i$ and $ax_h$ correspond to two distinct degree-$3$ vertices of~$\Gamma$.
These two grid vertices are non-adjacent.
Thus, $ax_i$ and $ax_h$ are not adjacent in $\TSsym_2(\overline{F})$.
Since the two root edges share the endpoint~$a$, they would be adjacent in $\TSsym_2(\overline{F})$ exactly when $x_i x_h\notin E(F)$.
Hence $x_i x_h\in E(F)$, and $|N_F(a)\cap N_F(x_i)|\ge r-1$.
Applying \cref{prop:per-edge-degree} to the mixed edge $ax_i$, whose image has degree~$3$ in~$\Gamma$, gives
$3=d_a+\deg_F(x_i)-2-2|N_F(a)\cap N_F(x_i)|$.
Hence,
\begin{equation}
\deg_F(x_i)\ge 5-d_a+2(r-1).
\label{eq:no33-x-est}
\end{equation}
The symmetric argument gives, for every $j\in\{1,\ldots,s\}$,
\begin{equation}
\deg_F(y_j)\ge 5-d_b+2(s-1).
\label{eq:no33-y-est}
\end{equation}

\begin{itemize}
\item \textbf{Case 1: $\{d_a,d_b\}=\{2,4\}$.}
By symmetry, assume $d_a=2$ and $d_b=4$.
Since $ab$ is incident with both $a$ and $b$, we have $r\le d_a-1=1$ and $s\le d_b-1=3$.
Together with $r+s=4$, this gives $r=1$ and $s=3$.
For every $j\in\{1,2,3\}$, inequality~\eqref{eq:no33-y-est} gives $\deg_F(y_j)\ge 5-4+2(3-1)=5$,
contradicting $\Delta(F)\le 4$.

\item \textbf{Case 2: $d_a=d_b=3$.}
Since $ab$ is incident with both $a$ and $b$, we have $r\le 2$ and $s\le 2$.
Together with $r+s=4$, this gives $r=s=2$.
By inequalities~\eqref{eq:no33-x-est} and~\eqref{eq:no33-y-est}, each of $x_1,x_2,y_1,y_2$ has degree at least~$4$.

We claim that $x_1,x_2,y_1,y_2$ are four distinct vertices.
The vertices $x_1,x_2$ are distinct, and the vertices $y_1,y_2$ are distinct.
If $x_i=y_j$ for some $i,j$, then $x_i$ is a common neighbor of $a$ and~$b$.
Indeed, $ax_i\in E(F)$ and $bx_i=by_j\in E(F)$.
However, the equation $4=d_a+d_b-2-2t_0$ with $d_a=d_b=3$ gives $t_0=0$.
The contradiction proves the claim.

Therefore, $\displaystyle \sum_{v\in V(F)}\deg_F(v)\ge d_a+d_b+\deg_F(x_1)+\deg_F(x_2)+\deg_F(y_1)+\deg_F(y_2)\ge 3+3+4\cdot 4=22$.
The last inequality is impossible because $\displaystyle \sum_{v\in V(F)}\deg_F(v)=2|E(F)|=18$.

\item \textbf{Case 3: $d_a=d_b=4$.}
Since $ab$ is incident with both $a$ and $b$, we have $r,s\le 3$.
Thus, $r\in\{1,2,3\}$.
If $r=1$, then $s=3$, and inequality~\eqref{eq:no33-y-est} gives $\deg_F(y_j)\ge 5-4+2(3-1)=5$ for every $j\in\{1,2,3\}$, contradicting $\Delta(F)\le 4$.
The case $r=3$ is symmetric.
Thus, $r=s=2$.

By inequalities~\eqref{eq:no33-x-est} and~\eqref{eq:no33-y-est}, each of $x_1,x_2,y_1,y_2$ has degree at least~$3$.
We claim that $x_i\ne y_j$ for all $i,j\in\{1,2\}$.
Suppose to the contrary that $x_i=y_j$.
Let $i'\in\{1,2\}\setminus\{i\}$.
As above, the two mixed edges $ax_i$ and $ax_{i'}$ are not adjacent in $\TSsym_2(\overline{F})$, so $x_i x_{i'}\in E(F)$.
Since $x_i=y_j$, we also have $bx_i\in E(F)$.
Hence, $b$ and $x_{i'}$ are two distinct common neighbors of $a$ and~$x_i$.
Therefore, $|N_F(a)\cap N_F(x_i)|\ge 2$.
Applying \cref{prop:per-edge-degree} to $ax_i$ gives $\deg_F(x_i)=5-d_a+2|N_F(a)\cap N_F(x_i)|\ge 5-4+2\cdot 2=5$,
contradicting $\Delta(F)\le 4$.
Thus, $x_1,x_2,y_1,y_2$ are four distinct vertices.

It follows that $\displaystyle \sum_{v\in V(F)}\deg_F(v)\ge d_a+d_b+\deg_F(x_1)+\deg_F(x_2)+\deg_F(y_1)+\deg_F(y_2)\ge 4+4+4\cdot 3=20$.
The last inequality is impossible because $\displaystyle \sum_{v\in V(F)}\deg_F(v)=2|E(F)|=18$.
\end{itemize}

All possible cases lead to contradictions.
Therefore, $\TSsym_2(\overline{F})\not\cong \Gamma_{3,3}$.
\end{proof}

The previous obstruction assumes a maximum-degree bound on the complement root.
The following elementary fact about $\Gamma_{3,3}$ gives the target-side estimate needed to prove that bound for any unreduced root.

\begin{lemma}
\label{lem:gamma33-five-vertex-bound}
In $\Gamma_{3,3}$, any two distinct vertices have at most two common neighbors, and every five-vertex set spans at most five edges.
\end{lemma}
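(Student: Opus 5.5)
The proof is an elementary check on the $3\times 3$ grid. We label its vertices $(i,j)$ with $i,j\in\{1,2,3\}$, two vertices being adjacent exactly when they differ by one in a single coordinate.

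\emph{Common neighbors.} Since $\Gamma_{3,3}$ is bipartite, two vertices with a common neighbor are at even distance, so their distance is $2$ or $4$. At distance $4$ (opposite corners) the grid distances rule out a common neighbor entirely. At distance $2$ there are two possibilities.
\begin{itemize}
\item The two vertices differ by $2$ in a single coordinate. Then the only vertex at distance $1$ from both is the midpoint, so there is exactly one common neighbor.
\item The two vertices differ by $1$ in each coordinate. Then the common neighbors are the two remaining corners of the unit square they span, so there are exactly two.
\end{itemize}
Hence any two distinct vertices have at most two common neighbors.

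\emph{Five-vertex sets.} The cleanest route is the bipartite structure. The colour classes are the five vertices with $i+j$ even (the four corners and the center) and the four vertices with $i+j$ odd (the side midpoints). Let $S$ be a $5$-set with $p$ even vertices and $q=5-p$ odd vertices. Every edge of $\Gamma[S]$ joins an even vertex of $S$ to an odd vertex of $S$. Each odd vertex has degree $3$, and each corner has degree $2$.
\begin{itemize}
\item If $p=5$, the set $S$ is independent.
\item If $p\le 1$, then $q\ge 4$, so $p=1$ and $q=4$. The number of edges is at most the degree of the single even vertex, which is at most $4$.
\item If $p=2$ and $q=3$, count edges from the even side: the two even vertices have total degree at most $4+2=6$. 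Counting from the odd side instead, three midpoints have total degree at most $9$, but that bound is too weak.
\end{itemize}

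Rather than push the bounds casewise, I would use a cleaner argument. Every induced subgraph of $\Gamma_{3,3}$ on $s$ vertices spans at most $s$ edges as long as it contains at most one $4$-cycle; equivalently, $|E|-|V|+1$ (per component) counts independent cycles. The only cycles in $\Gamma_{3,3}$ of length at most $5$ are its four unit squares, each on $4$ vertices. A $5$-set can contain at most one full unit square: two distinct unit squares share at most $2$ vertices and so together need at least $6$. Therefore $\Gamma[S]$ is a planar subgraph with at most one bounded face. The bounded faces of any subgraph of the grid are unions of unit cells, and each cell requires its $4$ vertices. By Euler's formula the edge count is at most $|S|-c+f\le 5-1+1=5$, where $c\ge1$ is the number of components and $f\le1$ is the number of bounded faces.

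The main obstacle is making the face argument rigorous. A bounded face of a subgraph of the plane grid encloses at least one unit cell, and enclosing a cell without all four of its corners would need a cycle of length at least $8$, hence at least $8$ vertices. So with $5$ vertices every cycle is a unit-square $4$-cycle, there is at most one, and the cycle rank is at most $1$. Thus $|E(\Gamma[S])|\le |S|-1+1=5$, with equality exactly for a unit square plus one pendant vertex.
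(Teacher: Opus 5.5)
Your proof is correct, and your common-neighbor analysis is essentially the paper's. For the five-vertex bound you take a different route.

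The paper finishes the bipartite count you began. The induced subgraph on $S$ is bipartite with parts of sizes $p$ and $5-p$, so it has at most $p(5-p)\le 6$ edges, with equality only for $K_{2,3}$. But $K_{2,3}$ would give two vertices three common neighbors, contradicting the first half of the lemma. Your color-class case analysis was one observation away from this: your $p=2$ count stalled at $6$, and $p=3,4$ were never treated. That abandoned fragment should be either completed this way or deleted.

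Your replacement argument is sound. Every cycle in $\Gamma_{3,3}[S]$ has at most five vertices, so by bipartiteness it is a $4$-cycle. A $4$-cycle is a unit square, because its opposite vertices have two common neighbors, which forces the diagonal case you identified. Two distinct squares span at least six vertices, so $\Gamma_{3,3}[S]$ has at most one cycle. Hence its cycle rank is at most $1$, and $|E(\Gamma_{3,3}[S])|\le |S|-c+1\le 5$.

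This route also gives the extremal configuration (a square plus a pendant vertex). The cost is more machinery than the paper's two-line reduction to the common-neighbor bound. The planar-face discussion is unnecessary, since ``at most one cycle implies cycle rank at most $1$'' follows from the fundamental cycles of a spanning forest. Your opening claim about arbitrary $s$-vertex induced subgraphs with at most one $4$-cycle is never proved in general, so state only the $|S|=5$ case you actually use.
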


\begin{proof}
We prove the two target-side bounds directly from the coordinate structure of $\Gamma_{3,3}$.
Represent vertices as pairs $(i,j)$ with $i,j\in\{0,1,2\}$.
A common neighbor of two vertices must be obtained from each by changing exactly one coordinate by~$1$.
Thus, two distinct vertices have common neighbors only when they are at graph distance~$2$, and then they have either one common neighbor in the same row or column, or two common neighbors when they are diagonally opposite corners of a grid square.
Hence, no two distinct vertices have more than two common neighbors.

Let $S\subseteq V(\Gamma_{3,3})$ with $|S|=5$.
The induced graph $\Gamma_{3,3}[S]$ is bipartite.
If it had at least six edges, its bipartition inside~$S$ would have sizes $2$ and~$3$ and all possible cross-edges would have to be present, so $\Gamma_{3,3}[S]\cong K_{2,3}$.
The two vertices in the part of size~$2$ would then have the three vertices in the other part as common neighbors, contradicting the common-neighbor bound above.
Therefore, $|E(\Gamma_{3,3}[S])|\le 5$.
\end{proof}

We can now finish the two-token exclusion by combining the target-side bounds of \cref{lem:gamma33-five-vertex-bound} with the center obstruction of \cref{lem:no33-center-obstruction}.

\begin{theorem}
\label{thm:no33}
The grid $\Gamma_{3,3}$ is not a $\TSsym_2$-graph.
\end{theorem}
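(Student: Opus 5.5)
We argue by contradiction. Suppose $\Gamma_{3,3}\cong\TSsym_2(G)$ and put $F:=\overline{G}$, fixing an isomorphism $\phi:\TSsym_2(\overline{F})\to\Gamma_{3,3}$ and identifying target vertices with root edges. Since $\Gamma_{3,3}$ is connected and non-empty with $N=9$ vertices, we may first apply \cref{lem:sizebound} and replace $G$ so that $F$ is connected, has no isolated vertices, and $|E(F)|=9$. After this reduction, \cref{lem:no33-center-obstruction} applies as soon as $\Delta(F)\le 4$. The whole proof therefore comes down to showing $\Delta(F)\le 4$.

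Assume, toward a contradiction, that some $v\in V(F)$ has degree $d\ge 5$. We apply \cref{lem:star-set-identity} to $v$, with $S_v$ the image of the star at $v$, so $|S_v|=d$. The identity gives
\[
\partial_\Gamma(S_v)+b=9-d-\binom d2+|E(\Gamma[S_v])|.
\]
The left side is nonnegative, so $|E(\Gamma[S_v])|\ge d+\binom d2-9$. We compare this with target-side edge bounds:
\begin{itemize}
\item For $d=5$ we need at least $6$ edges on $5$ vertices of $\Gamma_{3,3}$, contradicting \cref{lem:gamma33-five-vertex-bound}.
\item For $d\ge 6$ the required count $d+\binom d2-9$ is already at least $12=|E(\Gamma_{3,3})|$ when $d=6$. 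For $d=6$ equality would force $S_v$ to span all $12$ grid edges on only $6$ vertices, which is impossible. For $d\ge 7$ the required count exceeds $12$, and $d\le 9$ anyway.
\end{itemize}
A cleaner uniform route, if needed, is to note that any $d$-subset of $\Gamma_{3,3}$ spans at most $d$ edges for $d\le 8$, since removing a vertex leaves a forest plus at most the few squares. Each case is a short count.

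Hence $\Delta(F)\le 4$, and \cref{lem:no33-center-obstruction} with $|E(F)|=9$ yields the contradiction. The main obstacle I expect is confirming that the reduction of \cref{lem:sizebound} indeed gives exactly $|E(F)|=9$. This holds because root edges biject with the nine target vertices, as the proof of \cref{lem:sizebound} shows. The second point to check is that the star bound covers every $d\ge5$; the $d=5$ case carries the real content via the $K_{2,3}$ exclusion.
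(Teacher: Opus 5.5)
Your proof is correct and follows the paper's route: you bound $\Delta(F)\le 4$ via \cref{lem:star-set-identity}, using \cref{lem:gamma33-five-vertex-bound} at $d=5$ and, for $d\ge 6$, a comparison with the twelve edges of $\Gamma_{3,3}$ (which works as well as the paper's planar bipartite bound $2d-4$), and then invoke \cref{lem:no33-center-obstruction}; the preliminary reduction via \cref{lem:sizebound} is harmless but unnecessary, since $|E(\overline{G})|=9$ already holds for the unreduced complement. You should drop the ``cleaner uniform route'' aside: it is false (a $2\times 3$ subgrid spans $7$ edges on $6$ vertices), although your argument never uses it.
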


\begin{proof}
We prove the exclusion by first bounding the maximum degree of an unreduced complement root and then applying the center obstruction.
Suppose to the contrary that $\TSsym_2(H)\cong\Gamma_{3,3}$ for some graph~$H$.
Let $F_0:=\overline{H}$ and let $\Gamma:=\Gamma_{3,3}$.
In this proof we intentionally use the unreduced complement rather than the reduced complement-root convention above; isolated vertices of $F_0$ have no corresponding token states and do not affect the edge-count and maximum-degree arguments below.
Then $\TSsym_2(H)=\TSsym_2(\overline{F_0})$.
The vertices of $\TSsym_2(\overline{F_0})$ are exactly the edges of~$F_0$.
Thus, $|E(F_0)|=|V(\Gamma)|=9$.

We show that $\Delta(F_0)\le 4$.
Fix an isomorphism $\phi:\TSsym_2(\overline{F_0})\xrightarrow{\cong}\Gamma$.
Let $v\in V(F_0)$ have degree~$d$.
Apply \cref{lem:star-set-identity} to $v$ and~$\phi$.
The left-hand side in \cref{lem:star-set-identity} is non-negative.
If $d=5$, then \cref{lem:gamma33-five-vertex-bound} gives $|E(\Gamma[S_v])|\le 5$.
Therefore, $\displaystyle \partial_\Gamma(S_v)+b\le 9-5-\binom52+5=-1$,
impossible.
If $d\ge 6$, then $\Gamma[S_v]$ is an induced subgraph of the bipartite planar grid $\Gamma_{3,3}$.
Hence $\Gamma[S_v]$ is a simple bipartite planar graph on $d\ge 3$ vertices.
The bound $|E(\Gamma[S_v])|\le 2d-4$ also holds when this induced subgraph is disconnected, by applying the bipartite planar bound componentwise and using the trivial bound on components with at most two vertices.
Thus, $\displaystyle \partial_\Gamma(S_v)+b\le 9-d-\binom d2+(2d-4)=5+d-\binom d2<0$,
again impossible.
Hence, $\Delta(F_0)\le 4$.

Together with $|E(F_0)|=9$, the bound $\Delta(F_0)\le 4$ contradicts \cref{lem:no33-center-obstruction}.
\end{proof}

For a graph $F$, the \emph{$3$-Gallai graph} $\Theta(F)$ has the triangles of~$F$ as vertices.
Two triangles $T_1,T_2$ are adjacent in $\Theta(F)$ if $|T_1\cap T_2|=2$ and $F[T_1\cup T_2]\cong K_4-e$.
Equivalently, if $T_1=\{a,b,x\}$ and $T_2=\{a,b,y\}$ share the edge $ab$, then $T_1T_2\in E(\Theta(F))$ exactly when $xy\notin E(F)$.
The graph $\Theta(F)$ is Prisner's $k=3$ case of the $k$-Gallai graph~\cite[Def.~15.1, Sec.~15.1]{Prisner1995GraphDynamics}.
Independent triples in a graph $G$ are triangles in $\overline G$.
Thus $\TSsym_3(G)$ is the $3$-Gallai graph $\Theta(\overline G)$.

\begin{theorem}
\label{thm:no33-ts3}
The grid $\Gamma_{3,3}$ is not a $\TSsym_3$-graph.
\end{theorem}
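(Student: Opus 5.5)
The plan is to mimic the ladder argument one level up. I will classify the induced $4$-cycles of a three-token graph, propagate the classification across the four cells of $\Gamma_{3,3}$, and in the surviving case reduce to the two-token exclusion \cref{thm:no33}.

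Suppose $\TSsym_3(G)\cong\Gamma_{3,3}$, identify grid vertices with independent $3$-sets of $G$, and recall that $\TSsym_3(G)=\Theta(\overline G)$. First I would prove a three-token analogue of \cref{lem:c4-normal}. Each move changes exactly one token, and opposite vertices of an induced $4$-cycle are distinct and non-adjacent. Hence, for an induced cycle $I_1I_2I_3I_4$, the four moves either all act on a single slot, or act on two distinct slots, each slot moved twice in opposite directions. The analysis of the second case should go as in Case 1 of \cref{lem:c4-normal}, which rules out a cycle that returns to its start through an illegal slide. This gives two types:
\begin{itemize}
\item \emph{star type}: $I_i=J\cup\{a_i\}$ for a fixed $2$-set $J$, with $a_1a_2a_3a_4$ an induced $4$-cycle of $G$;
\item \emph{biclique type}: $I_i$ has a fixed common token $q$ together with the pattern of \cref{lem:c4-normal}(b) on the remaining two slots.
\end{itemize}

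Next I would run the propagation over the four cells of $\Gamma_{3,3}$. Consecutive cells around the center share a rung edge, and the four cells together cover all nine vertices. For a star-type cell, I would use the diamond idea of \cref{lem:diamond-independent-obstruction}. Write $J=\{j,j'\}$. Since the diagonal pair $a_1a_3$ is a non-edge and both $a_1,a_3$ are non-adjacent to $J$, the set $\{j',a_1,a_3\}$ is an extra independent $3$-set. I would show that this state cannot be any of the nine grid vertices, or that its forced adjacencies (to $J\cup\{a_1\}$ via the slide $ja_1$? — more precisely via slides inside the cell) violate the grid's degrees or its common-neighbour bound from \cref{lem:gamma33-five-vertex-bound}.

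For mixed interfaces, where a star cell is adjacent to a biclique cell, I would copy \cref{lem:ladder-mixed-interface}. The shared rung forces new states $\{a,c\}\cup\{\cdot\}$ and $\{b,d\}\cup\{\cdot\}$ that are adjacent to vertices of the biclique cell. These would give a vertex of $\Gamma_{3,3}$ too many neighbours, or force two disjoint-in-one-slot states to be adjacent. This step uses that every non-center vertex of the grid has degree at most $3$ and the center has degree $4$.

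The remaining case is that all four cells have biclique type. I would show that their common tokens coincide: adjacent cells share two states, whose intersection is a $2$-set, and I would argue as in \cref{lem:ladder-star-phase} that both cells' fixed tokens lie in it and must agree. This gives a token $q$ contained in all nine states. Set $G':=G-N_G[q]$. Every independent $2$-set of $G'$ together with $q$ is a state of $\TSsym_3(G)$, and slides between such states never move $q$. So the $q$-containing states induce exactly $\TSsym_2(G')$, and since every state contains $q$ we get $\TSsym_2(G')\cong\Gamma_{3,3}$. This contradicts \cref{thm:no33}.

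I expect the main obstacle to be the cell classification together with the star and mixed exclusions, and specifically showing that the extra states produced are genuinely new and not among the nine grid vertices. The three-token normal form also has more degenerate configurations than the two-token one, such as a slot being swapped out and then reused by a different token. I would first try a direct case analysis on which slots the moves touch, and fall back on the edge-count bound of \cref{lem:gamma33-five-vertex-bound} to rule out any configuration that forces a dense neighbourhood.
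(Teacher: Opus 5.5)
\textbf{Gap in the all-biclique step.} You claim that two adjacent biclique cells must have the same fixed token, ``as in \cref{lem:ladder-star-phase}''. That argument worked with two tokens because a rung shares exactly one token. Here a rung shares a $2$-set, and both fixed tokens only have to lie in it, so they need not coincide.

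For a concrete example, take the center $c=\{p,q,r\}$ with side vertices, in cyclic order,
$s_1=\{p_1,q,r\}$, $s_2=\{p,q_2,r\}$, $s_3=\{p_3,q,r\}$, $s_4=\{p,q,r_4\}$,
and corners $\{p_1,q_2,r\}$, $\{p_3,q_2,r\}$, $\{p_3,q,r_4\}$, $\{p_1,q,r_4\}$.
Every cell is a legitimate biclique cell, and the interfaces are consistent. But the two cells through the rung $cs_3$ have fixed tokens $r$ and $q$, both lying in $c\cap s_3=\{q,r\}$. Nothing in your plan excludes this configuration. So your reduction to \cref{thm:no33} covers only the sub-case where all four cells share one token. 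The remaining slid-token patterns around the center (here $p,q,p,r$) are left open.

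\textbf{The missing idea} is to ask which token of the center each side vertex slides. There are four side vertices and three tokens, so two of them, say $\{x,q,r\}$ and $\{y,q,r\}$, slide the same token $p$.
\begin{itemize}
\item Side vertices are pairwise non-adjacent, so $xy\notin E(G)$. Hence $\{q,x,y\}$ and $\{r,x,y\}$ are independent $3$-sets.
\item Each of these meets $c$ in only one token, so neither is the center or a side vertex.
\item Neither is adjacent to $\{x,q,r\}$ or $\{y,q,r\}$: every required slide edge ($ry$, $rx$, $qy$) is a non-edge, because the side vertices are independent.
\end{itemize}
So they would have to be two distinct corners, each non-adjacent to two given side vertices. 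In $\Gamma_{3,3}$ at most one corner has that property.

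This is the paper's proof. It applies in every case, including your all-common-$q$ case (where the slid tokens alternate $p,r,p,r$). So the cell classification, the star and mixed exclusions, and \cref{thm:no33} are not needed at all.

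Your star-cell exclusion, which you only sketch, does go through. The sets $\{j,a_1,a_3\}$ and $\{j',a_1,a_3\}$ are non-adjacent to all four cell states. In $\Gamma_{3,3}$, only the far corner is non-adjacent to a cell containing the center, so these two distinct states cannot both be placed. It follows that mixed interfaces never arise.
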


\begin{proof}
Suppose to the contrary that $\Gamma_{3,3}\cong\TSsym_3(G)$ for some graph~$G$.
Put $F:=\overline G$ and identify $\TSsym_3(G)$ with $\Theta(F)$ as above.
Fix an isomorphism $\Theta(F)\xrightarrow{\cong}\Gamma_{3,3}$.
Let $M=\{a,b,c\}$ be the triangle of~$F$ corresponding to the unique degree-$4$ vertex of $\Gamma_{3,3}$, namely its center.

The four neighbors of $M$ in $\Theta(F)$ correspond to the four side vertices of the grid.
Each such neighbor shares one of the three edges of~$M$.
Since there are four such neighbors but only three edges of~$M$, two of them share the same edge of~$M$.
After relabeling so that the repeated edge is $ab$, let these two neighbors be
$P=\{a,b,x\}$ and $Q=\{a,b,y\}$,
where $x,y\notin\{a,b,c\}$ and $x\ne y$.

The four side vertices of $\Gamma_{3,3}$ are pairwise non-adjacent, so $P$ and $Q$ are non-adjacent in $\Theta(F)$.
Since $P$ and $Q$ share the edge $ab$, this non-adjacency forces $xy\in E(F)$.
Therefore $F[\{a,b,x,y\}]\cong K_4$, so $U=\{a,x,y\}$ and $V=\{b,x,y\}$ are also triangles of~$F$ and hence vertices of $\Theta(F)$.
They are distinct because $a\ne b$.
Moreover, each shares exactly one vertex with~$M$, so neither is $M$ nor a neighbor of~$M$ in $\Theta(F)$.
Also $F[\{a,b,x,y\}]\cong K_4$, so neither $U$ nor $V$ is adjacent in $\Theta(F)$ to either $P$ or~$Q$.

Under the fixed isomorphism with $\Gamma_{3,3}$, the vertices other than the center and its four side vertices are exactly the four corners.
Thus, the images of $U$ and $V$ must be distinct corners, and each must be non-adjacent to the two side vertices corresponding to $P$ and~$Q$.

The required pair of corners does not exist in $\Gamma_{3,3}$.
Each corner is adjacent exactly to the two side vertices incident with it.
Hence, for two distinct side vertices, the number of corners non-adjacent to both is $0$ if the side vertices are opposite across the center, and $1$ if they are consecutive around the center.
In particular, at most one corner is non-adjacent to both selected side vertices.

The distinct vertices $U$ and $V$ would therefore require two distinct such corners, a contradiction.
Hence, $\Gamma_{3,3}$ is not a $\TSsym_3$-graph.
\end{proof}

Combining the two-token and three-token obstructions with the positive construction in \cref{thm:grid-allk-four} gives the fixed-token classification of $\Gamma_{3,3}$.

\begin{theorem}
\label{thm:gamma33-fixed-k}
For $k\ge 2$, the grid $\Gamma_{3,3}$ is a $\TSsym_k$-graph if and only if $k\ge 4$.
\end{theorem}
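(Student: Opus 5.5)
The classification of $\Gamma_{3,3}$ follows by assembling results already established, so the plan is to split the range $k\ge 2$ into the three regimes $k=2$, $k=3$, and $k\ge 4$ and invoke one earlier result for each.

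For the ``if'' direction, assume $k\ge 4$. I would apply \cref{thm:grid-allk-four} with $m=n=3$; this gives $\TSsym_k$-realizability directly for every $k\ge 4$. Explicitly, the host is $\overline{P_4}\sqcup\overline{P_4}$ padded with $k-4$ isolated vertices. By \cref{thm:grid-four-token-product} and \cref{prop:padding}, its $k$-token sliding graph is $P_3\square P_3=\Gamma_{3,3}$. The alternative route through \cref{cor:grid-allk} would only cover $k\ge m+n-2=4$ as well, so either citation suffices.

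For the ``only if'' direction, assume $2\le k\le 3$ and show non-realizability. The case $k=2$ is exactly \cref{thm:no33}, and the case $k=3$ is exactly \cref{thm:no33-ts3}. Since the statement restricts to $k\ge 2$, these two cases exhaust the values $k<4$, and the equivalence follows.

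I do not expect any real obstacle here, since all the substantive work lives in the cited theorems. The only point needing care is to check that the hypotheses match. \Cref{thm:grid-allk-four} requires $2\le m\le n$, which holds for $m=n=3$. The two obstruction theorems are stated for exactly $\TSsym_2$ and $\TSsym_3$ with no further side conditions.
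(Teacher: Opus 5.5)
Your proposal is correct and follows the paper's proof exactly: the paper also gets the ``only if'' direction from \cref{thm:no33} for $k=2$ and \cref{thm:no33-ts3} for $k=3$, and the ``if'' direction from \cref{thm:grid-allk-four}. Your remark that \cref{cor:grid-allk} would work equally well is also accurate, since $m+n-2=4$ when $m=n=3$.
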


\begin{proof}
We prove the two implications separately.
\begin{itemize}
\item[$(\Rightarrow)$]
By \cref{thm:no33}, $\Gamma_{3,3}$ is not a $\TSsym_2$-graph.
By \cref{thm:no33-ts3}, $\Gamma_{3,3}$ is not a $\TSsym_3$-graph.
Thus, any realization with $k\ge 2$ must have $k\ge 4$.

\item[$(\Leftarrow)$]
For every $k\ge 4$, \cref{thm:grid-allk-four} gives that $\Gamma_{3,3}$ is a $\TSsym_k$-graph.
\end{itemize}
\end{proof}

\section{Concluding Remarks}
\label{sec:conclusion}

This paper treats token-sliding realizability as an inverse structural problem for reconfiguration graphs.
The results identify both robust construction principles and sharp local obstructions.
Complement families give exact positive and negative targets, the disjoint-union product formula builds Cartesian-product state spaces, and grids test where product behavior persists or fails at small token values.
For grids under the standing convention $2\le m\le n$, we realize every $\Gamma_{m,n}$ at $k=m+n-2$ and for all $k\ge 4$, classify ladders by fixed token number, and settle $\Gamma_{3,3}$ at all fixed token values $k\ge 2$ (\cref{tab:realizability-summary}).

The remaining questions are small-token product cases, not exceptions to the large-token grid realization.
They are listed in \cref{tab:remaining-product-cases}.
\begin{table}[ht]
\centering
\renewcommand{\arraystretch}{1.12}
\begin{tabularx}{\textwidth}{@{}L{0.25\textwidth}L{0.18\textwidth}Y@{}}
\toprule
Family & Token value & Remaining cases \\
\midrule
$C_m\square P_n$ & $k=2$ & $C_3\square P_n$ with $n\ge 6$; $C_m\square P_n$ with $m\ge 4$ and $n\ge 3$ \\
\cmidrule(lr){1-3}
$C_m\square P_n$ & $k=3$ & $C_m\square P_n$ with $m\ge 4$ and $n\ge 3$ \\
\cmidrule(lr){1-3}
$C_m\square C_n$ & $k=2$ & $C_3\square C_n$ with $n\ge 4$; $C_m\square C_n$ with $m,n\ge 5$, up to symmetry \\
\cmidrule(lr){1-3}
$C_m\square C_n$ & $k=3$ & $C_m\square C_n$ with $m,n\ge 4$, up to symmetry \\
\cmidrule(lr){1-3}
$\Gamma_{m,n}=P_m\square P_n$ & $k=2,3$ & $3\le m\le n$, except $(m,n)=(3,3)$ \\
\bottomrule
\end{tabularx}
\caption{Remaining small-token Cartesian-product cases. Cycle--path rows summarize \cref{rem:cycle-path-frontier}; the grid row excludes the ladder case from \cref{thm:ladder-bounds} and $\Gamma_{3,3}$ from \cref{thm:gamma33-fixed-k}.}
\label{tab:remaining-product-cases}
\end{table}
The next width-three cases are $\Gamma_{3,n}$ with $n\ge 4$ at $k=2$ and $k=3$.
These questions require complement-root tools beyond the product, ladder, and $\Gamma_{3,3}$ arguments developed here.

\section*{Acknowledgements}

The work began at the Workshop on Graphs and Geometric Algorithms (WOGGA 3), OIST, Japan (2024).
We thank Meike Hatzel and Marcelo Garlet Milani for their fruitful discussions during the workshop.
The research was supported by the Vietnam National University, Hanoi under the project QG.25.07 ``A study on reconfiguration problems from algorithmic and graph-theoretic perspectives''.

\printbibliography

@article{AvisHoang2024,
	title          = {A Note on Acyclic Token Sliding Reconfiguration Graphs of Independent Sets},
	author         = {David Avis and Duc A. Hoang},
	year           = {2024},
	journal        = {Ars Combinatoria},
	volume         = {159},
	pages          = {133--154},
	doi            = {10.61091/ars159-12}
}

@article{AvisHoang2023,
	title          = {On Reconfiguration Graphs of Independent Sets Under Token Sliding},
	author         = {David Avis and Duc A. Hoang},
	year           = {2023},
	journal        = {Graphs and Combinatorics},
	volume         = {39},
	number         = {3},
	doi            = {10.1007/s00373-023-02644-w},
	eid            = {59}
}

@article{Hoang2026,
	title          = {On Realizing Reconfiguration Graphs of Cliques},
	author         = {Hoang, Duc A.},
	year           = {2026},
	journal        = {arXiv preprint},
	archiveprefix  = {arXiv},
	eprint         = {2604.03567}
}

@inproceedings{LamPhanHoang2026,
	title          = {A Note on Reconfiguration Graphs of Cliques},
	author         = {Lam, Nhat-Quan and Phan, Huu-An and Hoang, Duc A.},
	year           = {2026},
	booktitle      = {Proceedings of CALDAM 2026},
	series         = {LNCS},
	volume         = {16445},
	pages          = {416--427},
	doi            = {10.1007/978-3-032-17156-6_31},
	editor         = {Misra, Neeldhara and Pandey, Arti}
}

@article{Nishimura2018,
	title          = {Introduction to Reconfiguration},
	author         = {Nishimura, Naomi},
	year           = {2018},
	journal        = {Algorithms},
	volume         = {11},
	number         = {4},
	doi            = {10.3390/a11040052},
	eid            = {52}
}

@incollection{vandenHeuvel2013,
	title          = {The Complexity of Change},
	author         = {Jan van den Heuvel},
	editor         = {Blackburn, Simon R. and Gerke, Stefanie and Wildon, Mark},
	year           = {2013},
	booktitle      = {Surveys in Combinatorics},
	publisher      = {Cambridge University Press},
	series         = {London Mathematical Society Lecture Note Series},
	volume         = {409},
	pages          = {127--160},
	doi            = {10.1017/cbo9781139506748.005}
}

@incollection{MynhardtN2019,
	title          = {Reconfiguration of colourings and dominating sets in graphs},
	author         = {Mynhardt, C.M. and Nasserasr, S.},
	year           = {2019},
	booktitle      = {50 years of Combinatorics, Graph Theory, and Computing},
	publisher      = {CRC Press},
	pages          = {171--191},
	doi            = {10.1201/9780429280092-10},
	editor         = {Fan Chung and Ron Graham and Frederick Hoffman and Ronald C. Mullin and Leslie Hogben and Douglas B. West},
	edition        = {1st}
}

@article{BousquetMNS2024,
	title          = {A survey on the parameterized complexity of reconfiguration problems},
	author         = {Nicolas Bousquet and Amer E. Mouawad and Naomi Nishimura and Sebastian Siebertz},
	year           = {2024},
	journal        = {Computer Science Review},
	volume         = {53},
	doi            = {10.1016/j.cosrev.2024.100663},
	note           = {(article 100663)}
}

@article{HearnDemaine2005,
	title          = {{PSPACE}-Completeness of Sliding-Block Puzzles and Other Problems through the Nondeterministic Constraint Logic Model of Computation},
	author         = {Robert A. Hearn and Erik D. Demaine},
	year           = {2005},
	journal        = {Theoretical Computer Science},
	volume         = {343},
	number         = {1-2},
	pages          = {72--96},
	doi            = {10.1016/j.tcs.2005.05.008}
}

@article{MessingerPipes2026,
	title          = {On Pancyclicity in a Mixed Model for Domination Reconfiguration},
	author         = {Margaret-Ellen Messinger and Logan Pipes},
	year           = {2026},
	journal        = {Discrete Mathematics \& Theoretical Computer Science},
	volume         = {28},
	number         = {2},
	doi            = {10.46298/dmtcs.15637}
}

@article{MessingerPorter2025,
	title          = {Eulerian $k$-dominating reconfiguration graphs},
	author         = {M. E. Messinger and A. Porter},
	year           = {2025},
	journal        = {Discrete Mathematics \& Theoretical Computer Science},
	volume         = {27},
	number         = {2},
	doi            = {10.46298/dmtcs.13438}
}

@article{ErohSchultz1998,
	title          = {Matching graphs},
	author         = {Eroh, Linda and Schultz, Michelle},
	year           = {1998},
	journal        = {Journal of Graph Theory},
	volume         = {29},
	number         = {2},
	pages          = {73--86},
	doi            = {10.1002/(SICI)1097-0118(199810)29:2<73::AID-JGT3>3.0.CO;2-9}
}

@article{JonesRoehmSchultz1998,
	title          = {On Matchings in Graphs},
	author         = {Jones, D. M. and Roehm, D. J. and Schultz, Michelle},
	year           = {1998},
	journal        = {Ars Combinatoria},
	volume         = {50},
	pages          = {65--79}
}

@article{WangYuanLiuLin1998,
	title          = {On the maximum matching graph of a graph},
	author         = {Wang, Shi-ying and Yuan, Jin-jiang and Liu, Yan and Lin, Yixun},
	year           = {1998},
	journal        = {Operations Research Transactions},
	volume         = {2},
	number         = {2},
	pages          = {13--17},
	doi            = {10.15960/j.cnki.issn.1007-6093.1998.02.002}
}

@article{Liu2004Subgraphs,
	title          = {Subgraphs of Maximum Matching Graphs},
	author         = {Liu, Yan},
	year           = {2004},
	journal        = {Indian Journal of Pure and Applied Mathematics},
	volume         = {35},
	number         = {9},
	pages          = {1063--1067}
}

@article{Liu2005MaxMatchingTypes,
	title          = {Characterizations of maximum matching graphs of certain types},
	author         = {Liu, Yan},
	year           = {2005},
	journal        = {Discrete Mathematics},
	volume         = {290},
	number         = {2-3},
	pages          = {283--289},
	doi            = {10.1016/j.disc.2004.09.011}
}

@article{LiuLiu2014SecondKind,
	title          = {Second kind maximum matching graph},
	author         = {Liu, Yan and Liu, Zhengbiao},
	year           = {2014},
	journal        = {Discrete Mathematics},
	volume         = {323},
	pages          = {27--34},
	doi            = {10.1016/j.disc.2014.01.011}
}

@article{FabilaMonroyFHHUW2012,
	title          = {Token Graphs},
	author         = {Ruy Fabila Monroy and David Flores{-}Pe{\~{n}}aloza and Clemens Huemer and Ferran Hurtado and Jorge Urrutia and David R. Wood},
	year           = {2012},
	journal        = {Graphs and Combinatorics},
	volume         = {28},
	number         = {3},
	pages          = {365--380},
	doi            = {10.1007/s00373-011-1055-9}
}

@article{AberleGMO2025,
	title          = {Cycle decompositions of Cartesian products of two cycles},
	author         = {Aberle, Moriah and Gold, Sarah and Moshe, Rivkah and Offner, David},
	year           = {2025},
	journal        = {Graphs and Combinatorics},
	volume         = {41},
	doi            = {10.1007/s00373-025-02953-2},
	eid            = {84}
}

@article{FernandesLPSZ2026,
	title          = {A Study on Token Digraphs},
	author         = {Fernandes, Cristina G. and Lintzmayer, Carla N. and Pe{\~n}a, Juan P. and Santos, Giovanne and Trujillo-Negrete, Ana and Zamora, Jose},
	year           = {2026},
	journal        = {Discrete Mathematics},
	volume         = {349},
	number         = {5},
	doi            = {10.1016/j.disc.2025.114951},
	eid            = {114951}
}

@book{Prisner1995GraphDynamics,
	title          = {Graph Dynamics},
	author         = {Prisner, Erich},
	year           = {1995},
	publisher      = {Longman},
	address        = {Harlow, Essex},
	series         = {Pitman Research Notes in Mathematics Series},
	volume         = {338},
	isbn           = {9780582286962}
}

\clearpage
\appendix

\section{Prior token-sliding realizability results}
\label{app:known-realizability}

In \Cref{tab:known-realizability}, we summarize some previous known results.
Rows cited in the main text are used as dependencies; the remaining rows place the present constructions beside known classifications.
In the table, $H=(K\cup S,E)_{\text{$K$-max}}$ denotes a connected split graph whose clique part $K$ is a maximum clique and whose stable part is~$S$.
The notation $D_{r,n,s}$ denotes the graph obtained from $P_n$ by attaching $r$ leaves to one end and $s$ leaves to the other.
The forest row involving $T\sqcup rK_1$ uses the source-specific choice of the number of isolated vertices needed for the realization.

\begin{table}[htbp]
\centering
\renewcommand{\arraystretch}{1.18}
\begin{tabularx}{\textwidth}{@{}L{0.20\textwidth}YYL{0.11\textwidth}@{}}
\toprule
Group & Target graph $H$ & $\TSsym_k$-realizable? & Reference \\
\TableGroupRule
\multirow[c]{5}{0.20\textwidth}{Basic graphs}
& $K_n$ ($n\ge 2$), $P_n$ ($n\ge 1$), and $C_n$ ($n\ge 3$) & yes for all $k\ge 2$ & \cite{AvisHoang2023} \\
\cmidrule(lr){2-4}
& $K_{m,n}$, $1\le m\le n$ & yes iff $(m=1\text{ and }n\le k)$ or $m=n=2$ & \cite{AvisHoang2023} \\
\cmidrule(lr){2-4}
& connected split graphs $H=(K\cup S,E)_{\text{$K$-max}}$ & yes iff $|N_H(v)\cap S|\le k-1$ for $v\in K$ and $\deg_H(w)=1$ for $w\in S$ & \cite{AvisHoang2023} \\
\cmidrule(lr){2-4}
& maximal outerplanar graphs on $n$ vertices and $K_n-e$ & yes iff $n\le 3$ & \cite{AvisHoang2023} \\
\cmidrule(lr){2-4}
& disjoint unions; graphs on at most three vertices & yes for all graphs on at most three vertices; closed under disjoint union & \cite{AvisHoang2023} \\
\TableGroupRule
\multirow[c]{6}{0.20\textwidth}{Forests/trees}
& $K_{1,n}$ & yes iff $n\le k$ & \cite{AvisHoang2023,AvisHoang2024} \\
\cmidrule(lr){2-4}
& $q$-ary trees, $q\ge 2$ & yes for $k=q+1$ & \cite{AvisHoang2024} \\
\cmidrule(lr){2-4}
& $T\sqcup rK_1$, where $T$ is any tree and $r\ge 0$ is chosen suitably & yes for $k=2$ & \cite{AvisHoang2024} \\
\cmidrule(lr){2-4}
& $D_{1,n,2}$ & yes iff $n=3$ for $k=2$ & \cite{AvisHoang2024} \\
\cmidrule(lr){2-4}
& $D_{r,2,s}$, $1\le r\le s$ & yes iff $s\le k-1$ & \cite{AvisHoang2024} \\
\cmidrule(lr){2-4}
& $D_{r,n,s}$, $n,k\ge 2$, $1\le r\le s\le k-1$ & yes; for $k=2$ this is the path case, and for $k\ge 3$ it is covered by the $(k-1)$-ary tree construction & \cite{AvisHoang2024} \\
\bottomrule
\end{tabularx}
\caption{Prior $\TSsym_k$-realizability results.}
\label{tab:known-realizability}
\end{table}

\Cref{tab:matching-graph-targets} lists some results for maximum matching graphs.
Recall that maximum matching graphs form a restricted class of independent-set token-sliding reconfiguration graphs: maximum matchings of a graph correspond to maximum independent sets in its line graph.
In the table, $Q_m$ denotes the $m$-dimensional hypercube.
A triangle-block graph is a graph whose every block is a triangle.
For graphs $X$ and $Y$, the join $X+Y$ is obtained from their disjoint union by adding all edges between $V(X)$ and $V(Y)$.
The $r$-th power of $C_n$ has vertex set $V(C_n)$, where two vertices are adjacent if their distance in $C_n$ is at most~$r$.
An antipodal path for a $C_4$ is an additional path joining two opposite vertices of the $C_4$.

\begin{table}[htbp]
\centering
\begin{tabularx}{\textwidth}{@{}L{0.42\textwidth}YL{0.14\textwidth}@{}}
\toprule
Target graph & Maximum matching graph? & Reference \\
\midrule
$K_m$, $P_m$, $K_{1,m}$, $Q_m$ & yes & \cite{JonesRoehmSchultz1998} \\
\cmidrule(lr){1-3}
$C_m$, $m\ge 3$ & yes & \cite{JonesRoehmSchultz1998,ErohSchultz1998} \\
\cmidrule(lr){1-3}
trees & yes & \cite{Liu2005MaxMatchingTypes} \\
\cmidrule(lr){1-3}
connected triangle-block graphs, all degrees in $\{2,4\}$ & yes & \cite{WangYuanLiuLin1998} \\
\cmidrule(lr){1-3}
complete multipartite graphs & yes iff $C_4$, a star, or complete & \cite{ErohSchultz1998} \\
\cmidrule(lr){1-3}
$C_n^r$, $r\ge 2$, $n\ge 2r+2$ & no & \cite{ErohSchultz1998} \\
\cmidrule(lr){1-3}
$X+Y$, $X,Y$ connected and not both complete & no & \cite{ErohSchultz1998} \\
\cmidrule(lr){1-3}
graphs containing induced $K_4-e$ or $K_{2,k}$, $k\ge 3$ & no & \cite{JonesRoehmSchultz1998,Liu2004Subgraphs} \\
\cmidrule(lr){1-3}
graphs containing $C_4$ with an antipodal path of length $3$ or $4$ & no & \cite{ErohSchultz1998} \\
\bottomrule
\end{tabularx}
\caption{Maximum-matching-graph results.}
\label{tab:matching-graph-targets}
\end{table}
\end{document}